\documentclass[11pt, a4paper]{amsart}

\usepackage{amsthm,amsfonts,amsbsy,amssymb,amsmath,amscd}
\usepackage[all]{xy}
\usepackage[colorlinks=true]{hyperref}




\newcommand{\QQ}{\mathbb{Q}}
\newcommand{\RR}{\mathbb{R}}
\newcommand{\CC}{\mathbb{C}}
\newcommand{\PP}{\mathbb{P}}
\newcommand{\ZZ}{\mathbb{Z}}

\newcommand{\CA}{\mathcal{A}}

\newcommand{\CE}{\mathcal{E}}
\newcommand{\CH}{\mathcal{H}}
\newcommand{\CL}{\mathcal{L}}
\newcommand{\CO}{\mathcal{O}}
\newcommand{\CX}{\mathcal{X}}
\newcommand{\CY}{\mathcal{Y}}
\newcommand{\CZ}{\mathcal{Z}}

\newcommand{\charr}{\mathrm{char}\,}
\newcommand{\spec}{{\mathrm{Spec}}\,}

\newcommand{\ch}{{\mathrm{ch}}}

\newcommand{\nti}{{\mathrm{int}}}
\newcommand{\nth}{{\mathrm{th}}}

\newcommand{\rounddown}[1]{{\lfloor #1 \rfloor}}


\begin{document}
\title[Slope inequality]{Slope inequality for families of curves over surfaces}

\author{Tong Zhang}
\date{\today}

\address{Department of Mathematical Science, Durham University, Lower Mountjoy, Stockton Road, Durham DH1 3LE, United Kingdom}
\email{tong.zhang@durham.ac.uk}

\begin{abstract}
	In this paper, we investigate the general notion of the slope for families of curves $f: X \to Y$. The main result is an answer to the above question when $\dim Y = 2$, and we prove a lower bound for this new slope in this case over fields of any characteristic. Both the notion and the slope inequality are compatible with the theory for $\dim Y = 0, 1$ in a very natural way, and this gives a strong evidence that the slope for an $n$-fold fibration of curves $f: X \to Y$  may be $K_{X/Y}^n / \ch_{n-1}(f_* \omega_{X/Y})$.
	
	Rather than the usual stability methods, the whole proof of the slope inequality here is based on a completely new method using characteristic $p>0$ geometry. A simpler version of this method yields a new proof of the slope inequality when $\dim Y = 1$.
\end{abstract}

\maketitle


\theoremstyle{plain}
\newtheorem{theorem}{Theorem}[section]
\newtheorem{lemma}[theorem]{Lemma}
\newtheorem{coro}[theorem]{Corollary}
\newtheorem{prop}[theorem]{Proposition}
\newtheorem{defi}[theorem]{Definition}
\newtheorem{ques}[theorem]{Question}
\newtheorem{conj}[theorem]{Conjecture}

\newtheorem*{thm*}{Theorem}
\newtheorem*{conj*}{Conjecture}
\newtheorem*{ques*}{Question}

\theoremstyle{remark}
\newtheorem{remark}[theorem]{\bf Remark}
\newtheorem{assumption}[theorem]{\bf Assumption}
\newtheorem{example}[theorem]{\bf Example}

\numberwithin{equation}{section}



\section{Introduction}
In the study of families of curves, a fundamental notion is called the \emph{slope}. Let $f: X \to Y$ be a (non-isotrivial) fibration from a smooth surface $X$ to a smooth curve $Y$ with general fiber a smooth curve of genus $g \ge 2$. Assume that there is no $(-1)$-curve contained in fibers. Then the slope of $f$ refers to the following ratio:
$$
s(f) : = \frac{K^2_{X/Y}}{\deg f_* \omega_{X/Y}}.
$$
Throughout the past decades, it has become one of the key problems in algebraic geometry to investigate the bound of $s(f)$. These bounds are usually of great importance in many areas, including the slope conjecture itself (see \cite[Conjecture 0.1]{Harris_Morrison_Slope} for instance), the geography of surfaces (see \cite{Pardini_Severi}), the Oort conjecture (see \cite{Lu_Zuo_Oort_Conjecture,Chen_Lu_Zuo_Oort_Conjecture}). Among others, one most fundamental result related to $s(f)$ that has been essentially used in the aforementioned works is the following sharp lower bound:
\begin{equation}
	s(f) \ge \frac{4g-4}{g}. \label{surfaceslope}
\end{equation}
This bound was first obtained by Horikawa \cite{Horikawa_V} and Persson \cite{Persson_Double_Cover} for hyperelliptic fibrations over $\CC$ using a very explicit double cover method. The full version over $\CC$ was proved by Xiao \cite{Xiao_Slope} using the Harder-Narasimhan filtration for $f_*\omega_{X/Y}$, and independently by Cornalba-Harris \cite{Cornalba_Harris_Slope} for semi-stable fibrations via geometric invariant theory\footnote{Later, Stoppino \cite{Stoppino_Slope} found a way to treat non-semi-stable fibrations using the idea of Cornalba-Harris \cite{Cornalba_Harris_Slope}.}. Since then, this inequality is often referred as the \emph{slope inequality} or \emph{Cornalba-Harris-Xiao inequality} in the literature. Later, Moriwaki \cite{Moriwaki_Bogomolov} gave a different proof which works also in positive characteristics. We refer the reader to \cite[Chapter XIV]{ACG_Geometry_of_curves} as well as all references therein for more details about this slope inequality.

Not only the result itself, but also all methods mentioned above have been proven to be quite powerful. For example, both the Harder-Narasimhan filtration and geometric invariant theory have been applied in the study of various moduli problems (see \cite{Mumford_Stability,Huybrechts_Lehn_Moduli} for more details). 

It is fairly obvious that the above notion of the slope $s(f)$ only fits for one dimensional families of curves, because when $\dim Y > 1$, defining the degree of a vector bundle over $Y$ usually needs a polarization of which the choice is not that canonical unless some extra assumptions can be put. This naturally leads to the following general question:

\begin{ques*}
	What is the slope for a general family of curves $f: X \to Y$ where $\dim Y > 1$? Furthermore, is there a slope inequality in general?
\end{ques*}

The main purpose of this paper is to answer this question for $\dim Y = 2$, or equivalently, for $3$-fold fibrations of curves. Such an answer leads to a formulation of the slope of general families of curves.

\subsection{Main result}

Throughout this paper, $k$ always denotes an algebraically closed field. All varieties we consider are projective.

\begin{defi} \label{nfoldfibration}
	Let $f: X \to Y$ be a morphism between two normal varieties $X$ and $Y$ defined over $k$ with connected fibers. We say that $f$ is \emph{a fibration of curves of genus $g$}, if $f$ is flat, Cohen-Macaulay with pure relative dimension one and the general fiber of $f$ is a smooth curve of genus $g$.
\end{defi}

\begin{example}
	If $\dim X=1$, then $X$ is a normal hence smooth curve of genus $g$ over $Y=\spec k$. If $\dim X=2$ and in addition $X$ is smooth, $f$ is usually called a surface fibration in the previous literature. See \cite{BHPV} for example. Notice that the flatness and the Cohen-Macaulayness are guaranteed automatically in this case.
\end{example}

Suppose that $f: X \to Y$ is a fibration of curves of genus $g$ over $k$ as in Definition \ref{nfoldfibration}. By the duality theory, the relative dualizing sheaf $\omega_{X/Y}$ exists as a $Y$-flat and generically rank one sheaf on $X$, compatible with base changes (cf. \cite[Theorem 3.5.1, 3.6.1]{Conrad_Duality}). 
Furthermore, $f_* \omega_{X/Y}$ is a reflexive sheaf on $Y$ (see \cite[Corollary 5.26]{Horing_Positivity} for example), which particularly implies that $f_* \omega_{X/Y}$ is locally free if $Y$ is a smooth surface.

A very important property associated to $f_* \omega_{X/Y}$ is the \emph{positivity}. Much work related to this property in characteristic zero has been done by Fujita \cite{Fujita_Positivity}, Kawamata \cite{Kawamata_Characterization_of_Abelian}, Viehweg \cite{Viehweg_Weak_Positivity}, Koll\'ar \cite{Kollar_Subadditivity}, etc. For example, under Definition \ref{nfoldfibration}, if $\charr k = 0$, then $f_* \omega_{X/Y}$ is weakly positive. We refer the reader to \cite{Viehweg_Weak_Positivity} for the explicit definition of the weak positivity. In particular, $f_*\omega_{X/Y}$ is semi-positive as a locally free sheaf provided that $Y$ is a smooth surface. It turns out that this positivity becomes subtle in positive characteristics. Even for surface fibrations in positive characteristics, the weak positivity of $f_* \omega_{X/Y}$ could fail \cite[3.2]{Moret-Bailly_Families_de_courbes}. We refer to \cite{Patakfalvi_Semipositivity} for more details regarding this topic.

For any fibration $f: X \to Y$ from a normal variety $X$ to a Gorenstein variety $Y$, we write $\omega_{X/Y} = \omega_X \otimes f^* \omega^{-1}_Y$. In this case, $\omega_{X/Y}$ is a divisorial sheaf. We denote by $K_{X/Y}$ the relative canonical divisor associated to $\omega_{X/Y}$. We say that $f: X \to Y$ is \emph{relatively minimal}, if $K_{X/Y}$ is nef. Notice that if in addition $f$ is flat and Cohen-Macaulay, the sheaf $\omega_{X/Y}$ here coincides with the one obtained by the duality theory.

When $\charr k = 0$, the above definition of the \emph{relative minimality} is equivalent to that the canonical divisor $K_X$ is $f$-nef (cf. \cite[Theorem 3-1-1]{KMM_Intro_MMP}). However, this equivalence seems also subtle in positive characteristics. See \cite[Theorem 1.1]{Patakfalvi_Semipositivity} for more details.

Now we state the main theorem of this paper.

\begin{theorem}\label{main}
	Let $f: X \to Y$ be a relatively minimal fibration of curves of genus $g \ge 2$ from a normal $3$-fold $X$ to a smooth surface $Y$ defined over $k$ as in Definition \ref{nfoldfibration}. Then
	$$
	\left(\frac{1}{12} + \frac{1}{6g-6} \right)K^3_{X/Y} \ge  \ch_2(f_* \omega_{X/Y}),
	$$
	or equivalently
	$$
	K^3_{X/Y} \ge \frac{12g-12}{g+1} \ch_2(f_* \omega_{X/Y}),
	$$
	provided that one of the following assumptions holds:
	\begin{itemize}
		\item [(1)] $\charr k = 0$;
		\item [(2)] $\charr k > 0$ and $f_* \omega_{X/Y}$ is semi-positive.
	\end{itemize}
\end{theorem}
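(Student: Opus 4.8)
The plan is to deduce the characteristic $0$ case (1) from a positive characteristic statement and to prove the inequality in characteristic $p>0$ by a relative Frobenius argument, so that the genuine content lies in case (2). For the reduction, observe that both $K^3_{X/Y}$ and $\ch_2(f_*\omega_{X/Y})$ are numerical invariants, constant in a flat family; after spreading the whole situation out over a finitely generated $\ZZ$-algebra one reduces modulo a large prime $p$. In characteristic $0$ the sheaf $f_*\omega_{X/Y}$ is semi-positive by Viehweg--Kawamata, and the point is to arrange that a suitable consequence of this semi-positivity survives the reduction, so that the hypotheses of (2) hold for the reduced fibration. Granting this, it suffices to treat case (2).

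For case (2), write $E=f_*\omega_{X/Y}$, a locally free sheaf of rank $g$ on the smooth surface $Y$, and set $q=p^e$. Let $X_e=X\times_{Y,F_Y^e}Y$ be the base change along the $e$-th power of the absolute Frobenius of $Y$, with projection $\pi_e:X_e\to X$ and structure map $f_e:X_e\to Y$, and let $W^e:X\to X_e$ be the $e$-th relative Frobenius over $Y$, a finite purely inseparable morphism of degree $q$ with $f=f_e\circ W^e$. Base-change compatibility of the relative dualizing sheaf gives $\omega_{X_e/Y}=\pi_e^*\omega_{X/Y}$, hence $K^3_{X_e/Y}=q^2K^3_{X/Y}$ and $f_{e*}\omega_{X_e/Y}=(F_Y^e)^*E$. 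The relative Cartier operator produces a short exact sequence on $X_e$,
\[
0 \to B^{(e)} \to W^e_*\omega_{X/Y} \to \omega_{X_e/Y} \to 0 ,
\]
where $B^{(e)}$ restricts on a general fiber to the sheaf of locally exact relative differentials, of rank $q-1$ and degree $(q-1)(g-1)$. Applying $f_{e*}$ and using relative duality to kill the $R^1$-terms yields the four-term exact sequence
\[
0 \to f_{e*}B^{(e)} \to E \to (F_Y^e)^*E \to R^1 f_{e*}B^{(e)} \to 0 .
\]

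In the Grothendieck group of $Y$ this reads $[E]-[(F_Y^e)^*E]=[f_{e!}B^{(e)}]$, so Grothendieck--Riemann--Roch gives, for every $e$, the exact identity
\[
(q^2-1)\,\ch_2(E) = -\big[f_{e*}\big(\ch(B^{(e)})\,\mathrm{td}(T_{f_e})\big)\big]_2 ,
\]
the subscript denoting the component of complementary dimension on $Y$. The strategy is to bound the right-hand side asymptotically in $q$. Using Grothendieck--Riemann--Roch for the finite morphism $W^e$ together with the defining sequence above, I would express $\ch(B^{(e)})$ as a polynomial in $q$ whose coefficients are built from $\omega_{X/Y}$; its leading part, dictated by the fiberwise data (rank $q-1$, relative degree $(q-1)(g-1)$) and the Bernoulli coefficients entering $\mathrm{td}$, should after push-forward be bounded above by $\frac{g+1}{12(g-1)}(q^2-1)K^3_{X/Y}$ up to terms of lower order in $q$, the inequality coming from the nefness of $K_{X/Y}$ and the semi-positivity of $E$. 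Dividing by $q^2-1$ and letting $e\to\infty$ would then give $\ch_2(E)\le \frac{g+1}{12(g-1)}K^3_{X/Y}=\left(\frac{1}{12}+\frac{1}{6g-6}\right)K^3_{X/Y}$.

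The hard part is the control of $\ch(B^{(e)})$, equivalently of the Frobenius push-forward $W^e_*\omega_{X/Y}$, on the possibly singular threefold $X$. Grothendieck--Riemann--Roch for the purely inseparable $W^e$ forces one to understand its virtual relative tangent (Todd) class, and $B^{(e)}$ itself degenerates over the non-smooth fibers, so the fiberwise computation is only an approximation whose error is an intersection-theoretic defect supported on the vertical and singular locus. The crux is to show that this defect has the correct sign, so that it only improves the desired upper bound on $\ch_2(E)$, and this is exactly where the semi-positivity of $E$ (automatic in characteristic $0$, assumed in characteristic $p$) enters. Making the expansion in $q$ rigorous to all relevant orders, and verifying that the extremal coefficient is precisely $\frac{g+1}{12(g-1)}$ so that the bound is sharp, is the main obstacle; the one-dimensional case $\dim Y=1$ is the same argument with $\ch_2$ replaced by $\ch_1=\deg$, where the analogous limit recovers the classical constant $\frac{4g-4}{g}$.
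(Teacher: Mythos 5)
Your proposal takes a genuinely different route from the paper (which never touches the Cartier operator: it bounds $h^0(F^{e*}f_*\omega_{X/Y})=h^0(\CO_{X_e}(\rounddown{L_e}))$ from \emph{above} by $(\tfrac1{12}+\tfrac1{6g-6})L_e^3+o(p^{2e})$ via a double filtration of nef divisors on a $2$-tower $X\to Y\to B$, and from \emph{below} by $p^{2e}\ch_2+o(p^{2e})$ via Riemann--Roch plus a uniform $h^2$-bound coming from semi-positivity). But as written your argument has a genuine gap at its only quantitative step. The identity $(q^2-1)\ch_2(E)=-[f_{e*}(\ch(B^{(e)})\mathrm{td}(T_{f_e}))]_2$ is exact, so the inequality must come entirely from bounding the right-hand side, and this is precisely what you do not do: the claim that the leading part ``should after push-forward be bounded above by $\frac{g+1}{12(g-1)}(q^2-1)K^3_{X/Y}$'' is asserted, not proved. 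There are concrete obstructions to carrying it out. First, evaluating $\ch(B^{(e)})=\ch(W^e_*\omega_{X/Y})-\ch(\omega_{X_e/Y})$ requires Grothendieck--Riemann--Roch for the purely inseparable relative Frobenius $W^e$ on a normal (possibly singular) threefold; the virtual Todd class of $W^e$ is not accessible by the usual devices (computing $\ch(F_*\CO)$ is itself a hard open-ended problem even for smooth varieties), and $X_e$ need not be normal. Second, it is unclear how the semi-positivity of $E$ on $Y$ --- a hypothesis of the theorem in characteristic $p$ --- would enter a bound on intersection numbers living on $X$; in the paper it is used for something entirely different (a uniform bound on $h^2(F^{e*}E)$, Lemma \ref{vbsurface} and Proposition \ref{h0ch2}). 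Third, the sanity check you invoke is misleading: in the known Cartier-operator proofs of the slope inequality for $\dim Y=1$ the inequality comes from instability/filtration arguments on $F_*\omega$, not from a sign analysis of a GRR defect, so ``the same argument with $\ch_2$ replaced by $\ch_1$'' does not describe an existing proof you can lean on.

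The reduction to characteristic $p$ also skips the step that makes it delicate. Nefness of $K_{X/Y}$ and semi-positivity of $f_*\omega_{X/Y}$ are \emph{not} known to survive reduction mod $p$ (the paper cites Langer's examples of nef divisors all of whose reductions fail to be nef), so one cannot simply ``arrange that a suitable consequence of semi-positivity survives.'' The paper gets around this by proving the perturbed inequalities for $(K_{X/Y}+\varepsilon A)^3$ with $A$ ample and for $\CE$ with $H_{\CE}+\frac1m\pi_{\CE}^*A'$ nef (Theorem \ref{weakmain}, Proposition \ref{h0ch2weak}), since ampleness does spread out, and then lets $\varepsilon\to0$ and $m\to\infty$; this forces the whole characteristic-$p$ machinery to work for nef $\QQ$-divisors $L$ with $L|_C\ge K_C\ge\rounddown{L|_C}$ rather than for $K_{X/Y}$ alone. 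Your proposal would need the analogous flexibility, and the Cartier-operator sequence does not obviously perturb.
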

Here and in the following, for a vector bundle $\CE$, $\ch_i(\CE)$ refers to the $i^{\rm th}$ Chern character of $\CE$.

Let us put Theorem \ref{main} into perspective. Recall that for a relatively minimal fibration $f: X \to Y$ from a smooth surface $X$ to a smooth curve $Y$ over $k$ with general fiber a smooth curve of genus $g \ge 2$, we have the aforementioned slope inequality $s(f) \ge \frac{4g-4}{g}$ as (\ref{surfaceslope}). Notice that in this case, $\deg f_* \omega_{X/Y} = \ch_1(f_* \omega_{X/Y})$.
Thus the slope inequality (\ref{surfaceslope}) for $f$ just says the following:
$$
\left(\frac{1}{4} + \frac{1}{4g-4} \right) K^2_{X/Y} \ge \ch_1(f_* \omega_{X/Y}).
$$

In fact, such type of results also exists for a single curve (zero dimensional family of curves). Suppose that $\dim X = 1$, i.e., $f: X \to \spec k$ is a smooth curve of genus $g \ge 2$. In this case, $h^0(K_{X/k}) = \ch_0(f_* \omega_{X/k})$. Then the ``slope" (in)equality for $f$ is
$$
\left(\frac{1}{2} + \frac{1}{2g-2} \right) \deg K_{X/k} = \ch_0(f_* \omega_{X/k}),
$$
which is nothing but Riemann-Roch for $K_{X/k}$.

Theorem \ref{main} generalizes both of the above results to $3$-fold fibrations of curves of genus $g \ge 2$. Moreover, their compatibility suggests that for a relatively minimal $n (\ge 4)$-fold fibration $f: X \to Y$ of curves as in Definition \ref{nfoldfibration}, a suitable notion of the slope is likely to be the ratio between the following two invariants:
$$
K^n_{X/Y} \quad \mbox{and} \quad \ch_{n-1}(f_* \omega_{X/Y}).
$$
In a forthcoming paper \cite{Zhang_Relative_Clifford_inequality}, we will investigate this proposed slope when $n \ge 4$. In particular, we will prove that there is a slope inequality between the above two invariants when $Y$ is an abelian variety of arbitrary dimension in characteristic zero which is compatible with all known results.

\subsection{Idea of the proof}
Roughly speaking, our proof of Theorem \ref{main} is based on a characteristic $p>0$ method. Namely, we first prove Theorem \ref{main} in positive characteristics. Then the characteristic zero result follows from a weaker version of Theorem \ref{main} in positive characteristics and a perturbation method.

Recall that the methods used by Xiao \cite{Xiao_Slope}, Cornalba-Harris \cite{Cornalba_Harris_Slope} and Moriwaki \cite{Moriwaki_Bogomolov} to prove (\ref{surfaceslope}) are all stability methods in various versions, but our proof here does not employ any stability method. Moreover, the method in this paper does allow us to give a new proof of (\ref{surfaceslope}) in arbitrary characteristic, and this will be presented at the end of this paper.\footnote{It is not clear, at least to the author, that whether the aforementioned stability methods could give rise to a slope inequality over surfaces other than curves.}

For the convenience of the reader, in the following, we explain our proof in more details.

\subsubsection*{Step 1: $F$-stable dimension in characteristic $p > 0$}
Recall that Theorem \ref{main} is a result about the self-intersection number of divisors on $3$-folds and the Chern character of vector bundles over surfaces. In this paper, we introduce a new notion in characteristic $p > 0$, namely the \emph{$F$-stable dimension of global sections}, serving as a bridge connecting the above two invariants.

The formal definition is the following.
\begin{defi} \label{h0f}
	Let $V$ be a variety over $k$ of characteristic $p > 0$. Let $F^e$ be the $e^{\rm th}$ absolute Frobenius morphism of $V$. For any coherent sheaf $\CE$ on $V$, the \emph{$F$-stable dimension of global sections $h^0_F(\CE)$} of $\CE$ is defined as 
	$$
	h^0_F(\CE) : = \liminf_{e \to \infty} \frac{h^0(F^{e*} \CE)}{p^{e \dim V}}.
	$$
\end{defi}

This new invariant is not easy to compute in general, but when $\CE$ is a semi-positive vector bundle over a surface in positive characteristics, we can show that
\begin{equation}
h^0_F(\CE) \ge \ch_2(\CE). \label{reduction0}
\end{equation}
See Proposition \ref{h0ch2}. Therefore, to prove Theorem \ref{main} in positive characteristics, it suffices to prove the following inequality:
\begin{equation}
\left(\frac{1}{12} + \frac{1}{6g-6} \right)K^3_{X/Y} \ge h^0_F (f_* \omega_{X/Y}). \label{reduction1}
\end{equation}
This new inequality (\ref{reduction1}) is a comparison between the intersection number and the dimension of global sections, although the right hand side is a limit. However, this reduction somehow indicates that we may prove Theorem \ref{main} in positive characteristics by
\begin{itemize}
	\item [(i)] setting up a rough estimate first, and then
	\item [(ii)] taking the limit using the Frobenius morphism.
\end{itemize}
In fact, the proof does go in this way.

\subsubsection*{Step 2: An estimate via double filtrations}
Now we generalize the problem a bit. Let $\charr k = p > 0$ and $f: X \to Y$ be a fibration from a normal $3$-fold $X$ to a smooth surface $Y$ defined over $k$ with general fiber $C$ a smooth curve of genus $g \ge 2$, not necessarily flat or Cohen-Macaulay. Let $L$ be a nef $\QQ$-divisor on $X$ such that $L|_C \ge K_C \ge \rounddown{L|_C}$. According to Step 1, we need a rough estimate as
\begin{equation}
	\left(\frac{1}{12} + \frac{1}{6g-6} \right)L^3 \ge h^0(\CO_X(\rounddown{L})) + \mbox{extra term} \label{reduction2}
\end{equation}
such that the ``Frobenius limit" of the extra term is zero.

The method for proving (\ref{reduction2}) is based on a double filtration for nef divisors. Replacing $Y$ by an appropriate blowing up, we may assume that $Y$ is also fibered over a smooth curve $B$. Then we get a $2$-tower of fibrations from $X$ to $B$ as follows:
$$
\xymatrix{
	X \ar[r]^f \ar@/^1.5pc/[rr]^{\pi} & Y \ar[r] & B
	}
$$
Replacing $X$ by an appropriate blowing up, we can construct a filtration 
$$
L := L_0 > L_1 > \ldots > L_N \ge 0
$$
of nef divisors on $X$ with respect to the fibration $\pi$. Denote by $F$ a general fiber of $\pi$. The key point is that $f|_F$ is also a fibration. Therefore, for each $L_i|_F$, we can construct a similar filtration
$$
L_i|_F := L_{i, 0} > L_{i, 1} > \ldots > L_{i, N_i} \ge 0
$$
of nef divisors on $F$ with respect to $f|_F$. Then we are able to compare $L^3$ and $h^0(\CO_X(\rounddown{L}))$ using the nef thresholds (see Definition \ref{nefthresholddef}) obtained from this double filtration, and the proof of (\ref{reduction2}) can be completed in this way. We refer to Theorem \ref{relnoether32} for an explicit version of (\ref{reduction2}).

\subsubsection*{Step 3: Limiting method via the Frobenius base change}
The idea to prove (\ref{reduction1}) from (\ref{reduction2}) is via the Frobenius base change. Let $f: X \to Y$ be as in Theorem \ref{main}. Let $L$ be any nef $\QQ$-divisor as in Step 2. By the base change of $f$ via the $e^{\nth}$ Frobenius morphism $F^e$, we get a new fibration $f_e: X_e \to Y$ where $X_e = X \times_{F^e} Y$. Let $L_e$ be the pullback of $L$ to $X_e$. We can apply (\ref{reduction2}) to get an inequality between $L^3_e$ and $h^0(f_{e*}\CO_{X_e} (\rounddown{L_e}))$. Finally,  (\ref{reduction1}) follows after we replace $L$ by $K_{X/Y}$ and take $e \to \infty$. 

\subsubsection*{Step 4: Weaker results which imply Theorem \ref{main} in characteristic zero} Now we go to the proof of Theorem \ref{main} when $\charr k = 0$. The basic idea is to use the reduction mod $p$. We could directly apply (\ref{reduction0}) and (\ref{reduction1}) to get the desired result. However, there are some obstructions. To explain them, let $\CX \to \CY \to \CZ$ be an integral model extending $X \to Y \to \spec k$. Although $K_{X/Y}$ is nef and $f_* \omega_{X/Y}$ is semi-positive, unfortunately, we do not know whether we are able to find a closed point $z \in \CZ$ so that $K_{\CX_z / \CY_z}$ is nef and ${f_z}_* \omega_{\CX_z /\CY_z}$ is semi-positive. In fact, there are several conjectures about whether the nefness can be kept after the reduction mod $p$. See the problem \cite[Problem 5.4]{Miyaoka_Chern} posed by Miyaoka and a more general conjecture by Langer in \cite[Conjecture 5.5]{Langer_Positivity} for example. Recently, Langer \cite{Langer_adv} has constructed examples of nef divisors in characteristic zero whose each reduction is not nef, which is somehow surprising. Nevertheless, by \cite[Chap. III, Theorem 4.7.1]{Grothendieck_EGA3}, we can keep the ampleness via the reduction mod $p$. Notice that the nefness is indeed a limit of the ampleness. This suggests that we may perturb both $K_{X/Y}$ and $f_* \omega_{X/Y}$ a bit to get a weaker result first and then use this weaker result to approximate Theorem \ref{main}.

Our proof goes as follows. We find that in order to prove Theorem \ref{main}, it suffices to prove
\begin{equation}
\left(\frac{1}{12} + \frac{1}{6g-6} \right)(K_{X/Y} + \varepsilon A)^3 \ge \ch_2(f_* \omega_{X/Y}) \label{reduction3}
\end{equation}
for an ample divisor $A \ge 0$ on $X$ and for any $\varepsilon > 0$ sufficiently small. Although weaker than Theorem \ref{main}, (\ref{reduction3}) does imply Theorem \ref{main} when taking $\varepsilon \to 0$. A very important advantage of (\ref{reduction3}) is that $K_{X/Y} + \varepsilon A$ is ample. This allows us to use the reduction mod $p$ method. 

Let $\CA$ be the universal line bundle on $\CX$ extending $A$. Fix an $\varepsilon > 0$ very small. Similar to Step $2$ and $3$, we are able to prove that 
\begin{equation}
\left(\frac{1}{12} + \frac{1}{6g-6}\right)(K_{\CX_z/\CY_z} + \varepsilon \CA_z)^3 \ge h^0_F({f_z}_* \omega_{\CX_z/\CY_z}) \label{reduction4}
\end{equation}
for almost $z \in \CZ$. It can be seen from here that in order to get (\ref{reduction4}), we do need to work with $\QQ$-divisors. This illustrates the necessity why in Step 2 we have to deal with $\QQ$-divisors other than integral divisors only.

Now the issue is to compare $h^0_F(f_* \omega_{\CX_z/\CY_z})$ and $\ch_2(f_{z*} \omega_{\CX_z/\CY_z})$. We do not know whether ${f_z}_* \omega_{\CX_z/\CY_z}$ is still semi-positive to ensure (\ref{reduction0}). However, since ${f_z}_* \omega_{\CX_z/\CY_z}$ is very close to be semi-positive, we are able to prove that for any fixed $\varepsilon' > 0$, the inequality
\begin{equation}
h^0_F({f_z}_* \omega_{\CX_z/\CY_z}) \ge \ch_2({f_z}_* \omega_{\CX_z/\CY_z}) - \varepsilon' \label{reduction5}
\end{equation}
holds for almost all $z \in \CZ$. Here we use the perturbation method once more, twisting $f_*\omega_{X/Y}$ by ample $\QQ$-divisors before reduction mod $p$. Since $\varepsilon'$ is arbitrary, (\ref{reduction4}) and (\ref{reduction5}) together will imply (\ref{reduction3}). Hence the proof of Theorem \ref{main} in characteristic zero is completed.

\subsection{Structure of the paper} This paper is organized as follows. In Section $2$, we introduce some basic notions and results. Section $3$--$6$ are devoted to the study of fibered surfaces and $3$-folds, the construction of the double filtration and the proof of (\ref{reduction2}). In Section $7$, we focus on vector bundles over surfaces in positive characteristics, and the main results there are (\ref{reduction0}) and (\ref{reduction5}). Eventually, the proof of Theorem \ref{main} is presented in Section $8$, where we also present an example indicating that the inequality in Theorem \ref{main} is very close to be optimal. In Section $9$, adopting the whole idea above, we give a completely new (stability-free) proof of the slope inequality (\ref{surfaceslope}).

\subsection*{Acknowledgment} This paper was conceived during the author's visit to Johannes Gutenberg Universit\"at Mainz in January 2015. The author would like to thank Professor Kang Zuo for sharing his perspectives on the slope inequality and its connection to other areas. The author would like to thank Professors Xi Chen, Yifei Chen, Christopher Hacon, Adrian Langer, Zsolt Patakfalvi, Karl Schwede, Lidia Stoppino, Xiaotao Sun, Xiaowei Wang, Chenyang Xu and Xinyi Yuan, Doctors Jianke Chen, Xiaolei Liu and Jinsong Xu for their comments on this paper. Special thanks go to Professor Xiaotao Sun for his communications with the author on this topic since 2013 as well as pointing out some mistakes in the first draft of the current paper, and to Professor Adrian Langer for bringing the paper \cite{Langer_Moduli} to the author's attention.

Part of this paper was presented and written during the author's visit to the University of Hong Kong, the University of Utah, Beijing International Center for Mathematical Research, Mathematical Institute in Chinese Academy of Science and East China Normal University. The author would like to thank their hospitalities.

\section{Preliminaries}

In this section, we list some conventions and fundamental results which will be used throughout this paper.

\subsection{Conventions}
In this paper, we follow the conventions below.

\subsubsection*{Fibration} Let $V_1$ and $V_2$ be two varieties. A fibration $f: V_1 \to V_2$ means a surjective morphism from $V_1$ to $V_2$ with connected fibers.

\subsubsection*{Divisor} Let $V$ be a variety. We say that $L$ is a $\QQ$-divisor on $V$, if $L = \sum a_i D_i$ where $a_i \in \QQ$ and each $D_i$ is a prime divisor on $V$. We denote by $\rounddown{L}$ the integral part of $L$, i.e., $\rounddown{L} = \sum \rounddown{a_i} D_i$ where $\rounddown{a_i}$ is the biggest integer not exceeding $a_i$. A divisor $L$ is $\QQ$-Cartier if certain multiple of $L$ is Cartier. If $V$ is smooth, we will not distinguish integral divisors and line bundles, and we will simply denote $h^0(\CO_V(L))$ by $h^0(L)$ for $L$ integral.

\subsubsection*{Vector bundle} Let $V$ be a variety and $\CE$ be a vector bundle over $V$. We denote by $\PP(\CE)$ the projectivization of $\CE$ and by $\pi_{\CE}$ the projection $\PP(\CE) \to V$. We also denote by $H_\CE$ a divisor associated to $\CO_{\PP(\CE)}(1)$. Notice that $\CE$ is positive (resp. semi-positive) if $H_{\CE}$ is ample (resp. nef).

\subsection{Nef threshold with respect to fibrations over curves} Let $f: X \to B$ be a fibration from a variety $X$ to a smooth curve $B$ with a general fiber $F$ integral.

\begin{defi} \label{nefthresholddef}
	Let $L$ be a nef divisor on $X$. The \emph{nef threshold of $L$ with respect to $f$} is the following real number:
	$$
	\mathrm{nt}_f(L) := \sup \{a \in \RR | L- aF \, \mbox{is nef} \}.
	$$
\end{defi}
In this paper, we mainly consider the following larger integral invariant
$$
\nti_{f}(L) := \rounddown{\mathrm{nt}_f(L)} + 1 = \min \{a \in \ZZ | L- aF \, \mbox{is not nef} \}.
$$
This notion has been used in \cite{YuanZhang_RelNoether,Zhang_Severi}. It is easy to see that $\nti_f(L) \ge 1$. Moreover, we have the following simple result about this invariant.

\begin{prop} \label{nefthreshold}
	Suppose that $\sigma: X' \to X$ is a birational morphism. Let $f': X' \stackrel{\sigma}{\to} X \stackrel{f}{\to} B$ be the induced fibration with a general fiber $F'=\sigma^* F$. Then for any nef divisor $L$ on $X$,
	$$
	\nti_{f'}(\sigma^*L) = \nti_f(L).
	$$
\end{prop}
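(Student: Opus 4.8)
The plan is to reduce the equality of the integral invariants $\nti$ to the equality of the real-valued nef thresholds, that is, to $\mathrm{nt}_{f'}(\sigma^* L) = \mathrm{nt}_f(L)$; once this is established, applying $\rounddown{\cdot} + 1$ to both sides immediately yields the claim. Since the pullback of divisors is linear and $F' = \sigma^* F$ by hypothesis, for every $a \in \RR$ we have the identity $\sigma^* L - a F' = \sigma^*(L - aF)$ as $\RR$-divisor classes. Hence it suffices to show that the two sets
$$
\{a \in \RR \mid \sigma^*(L-aF) \text{ is nef}\} \quad \text{and} \quad \{a \in \RR \mid L - aF \text{ is nef}\}
$$
coincide, for then their suprema agree and the nef thresholds are equal.

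This in turn follows from the general fact that for a birational morphism $\sigma: X' \to X$ and any $\RR$-divisor $D$ on $X$, the pullback $\sigma^* D$ is nef on $X'$ if and only if $D$ is nef on $X$. I would prove both implications via the projection formula $\sigma^* D \cdot C' = D \cdot \sigma_* C'$ for curves $C' \subset X'$. If $D$ is nef, then for any irreducible curve $C' \subset X'$ the pushforward $\sigma_* C'$ is an effective $1$-cycle (and is zero when $C'$ is $\sigma$-exceptional), so $\sigma^* D \cdot C' = D \cdot \sigma_* C' \ge 0$, showing that $\sigma^* D$ is nef. Conversely, suppose $\sigma^* D$ is nef and let $C \subset X$ be an irreducible curve; since $\sigma$ is proper and surjective, there is an irreducible curve $C' \subset X'$ with $\sigma(C') = C$, and then $\sigma_* C' = d \cdot C$ with $d = \deg(\sigma|_{C'}) \ge 1$, whence $D \cdot C = d^{-1}(\sigma^* D \cdot C') \ge 0$. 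As $C$ was arbitrary, $D$ is nef.

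Combining the two observations, the defining sets for $\mathrm{nt}_{f'}(\sigma^* L)$ and $\mathrm{nt}_f(L)$ coincide, so the two nef thresholds are equal, and the proposition follows after passing to $\nti = \rounddown{\mathrm{nt}} + 1$. The only point requiring genuine care is the converse direction of the nef-pullback equivalence, namely producing a curve in $X'$ that maps finitely and surjectively onto a prescribed curve in $X$; this is precisely where the surjectivity of the birational morphism $\sigma$ enters, and it is the step I expect to be the main (if ultimately routine) obstacle.
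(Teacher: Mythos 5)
Your proof is correct and rests on exactly the same observation as the paper's, namely that $L-aF$ is nef if and only if $\sigma^*L-aF'=\sigma^*(L-aF)$ is nef; the paper simply states this as an observation while you supply the standard projection-formula justification. Passing through the real-valued threshold $\mathrm{nt}$ rather than working directly with integers $a$ is an inessential difference.
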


\begin{proof}
	This result follows from the observation that for any $a \in \ZZ$, $L-aF$ is nef if and only if $\sigma^*L - aF'$ is nef.
\end{proof}

\subsection{Remark on intersection numbers}
In this paper, the computation of intersection numbers appears frequently. We will use the following result all the time and sometimes may not mention it.

\begin{prop} \label{intersectionnumber}
	Let $V$ be a variety of dimension $n$. Let $A_1$, $\cdots$, $A_n$, $B_1$, $\cdots$, $B_n$ all be nef $\QQ$-divisors on $V$ such that $B_i-A_i$ is pseudo-effective for any $1 \le i \le n$. Then we have
	$$
	A_1A_2 \cdots A_n \le B_1B_2 \cdots B_n.
	$$
\end{prop}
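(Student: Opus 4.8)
The plan is to prove the inequality by a telescoping argument that swaps each $A_i$ for $B_i$ one factor at a time, reducing the whole statement to the single fundamental fact that the intersection of one pseudo-effective divisor with $n-1$ nef divisors is nonnegative.

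First I would write the difference as the telescoping sum
$$
B_1\cdots B_n - A_1\cdots A_n = \sum_{i=1}^{n} A_1\cdots A_{i-1}\,(B_i-A_i)\,B_{i+1}\cdots B_n .
$$
Each summand is the intersection of the pseudo-effective divisor $B_i-A_i$ with the $n-1$ nef divisors $A_1,\dots,A_{i-1},B_{i+1},\dots,B_n$. Thus it suffices to establish the following claim: if $P$ is a pseudo-effective $\QQ$-divisor and $N_1,\dots,N_{n-1}$ are nef $\QQ$-divisors on $V$, then $P\cdot N_1\cdots N_{n-1}\ge 0$.

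To prove the claim I would first treat the case where $P$ is effective. Writing $P=\sum_j c_j P_j$ with $c_j\ge 0$ and each $P_j$ a prime divisor, one has
$$
P\cdot N_1\cdots N_{n-1}=\sum_j c_j\,\bigl(N_1|_{P_j}\cdots N_{n-1}|_{P_j}\bigr).
$$
Since the restriction of a nef divisor to a subvariety is again nef, each bracketed term is an intersection of nef divisors on the $(n-1)$-dimensional projective variety $P_j$, hence nonnegative by the basic positivity property of nef divisors. Therefore the linear functional $D\mapsto D\cdot N_1\cdots N_{n-1}$ is nonnegative on all effective classes. To pass from effective to pseudo-effective, I would use that this functional is linear, hence continuous, on the finite-dimensional Néron--Severi space $N^1(V)_\RR$, and that the pseudo-effective cone is by definition the closure of the effective cone; a nonnegative continuous functional on a cone stays nonnegative on its closure, which yields the claim for every pseudo-effective $P$ and completes the argument.

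The only genuine subtlety, and where care is needed, is the reduction to prime components combined with the nonnegativity of nef intersection numbers on the $P_j$: these subvarieties may be singular, so one must invoke the general fact that nef divisors have nonnegative products on an arbitrary projective variety (not merely on smooth ones), together with the stability of nefness under restriction. Granting these standard facts, the telescoping and the closure step for pseudo-effectivity are both immediate.
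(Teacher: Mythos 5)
Your proposal is correct and is essentially the paper's own argument: the paper swaps $A_1$ for $B_1$ using that a pseudo-effective divisor meets $n-1$ nef divisors nonnegatively, then concludes by induction, which is exactly your telescoping sum written one step at a time. Your extra justification of the key positivity fact (reduction to effective divisors plus passage to the closure of the effective cone) is sound but is simply taken as standard in the paper.
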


\begin{proof}
	Since $B_1-A_1$ is pseudo-effective, by the nefness assumption, we deduce that
	$$
	A_1 A_2 \cdots A_n \le B_1 A_2 \cdots A_n.
	$$
	Thus by induction, the proof is completed.
\end{proof}

\section{Linear system on fibered surfaces}

Let $f: X \to B$ be a fibration from a smooth surface $X$ to a smooth curve $B$ defined over $k$ such that the general fiber $F$ is a smooth curve of genus $g \ge 2$. Let $L \ge 0$ be a nef divisor on $X$. Denote $a_0 = \nti_f(L)$.

\begin{theorem} \label{filtration2}
	Suppose that $LF > 0$. Then we have the following sequence of triples
	$$
	\{(L_i, Z_i, a_i) | i=0, \ldots, N\}
	$$
	on $X$ such that 
	\begin{itemize}
		\item $(L_0, Z_0, a_0) = (L, 0, \nti_f(L))$.
		\item For any $i \ge 1$, we have the decomposition
		$$
		|L_{i-1}-a_{i-1}F| = |L_i| + Z_i,
		$$
		where $Z_i$ is the fixed part of $|L_{i-1}-a_{i-1}F|$ and the movable part $L_i$ of $|L_{i-1}-a_{i-1}F|$ is nef whose base locus (if not empty) has dimension zero. Here $a_i = \nti_f(L_i)$.
		\item $h^0(L_N-a_NF) = 0$.
		\item $L_0F > L_1F > \ldots > L_NF \ge 0$.
	\end{itemize}
\end{theorem}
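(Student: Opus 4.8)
The plan is to construct the triples inductively, at each stage subtracting the integral nef threshold multiple of $F$ and splitting off the fixed part. I set $(L_0,Z_0,a_0)=(L,0,\nti_f(L))$. Suppose $(L_{i-1},Z_{i-1},a_{i-1})$ has been constructed and $h^0(L_{i-1}-a_{i-1}F)>0$. Writing the complete linear system as its movable and fixed parts,
$$
|L_{i-1}-a_{i-1}F|=|L_i|+Z_i,
$$
I take $L_i$ to be the movable part and $Z_i$ the fixed part, and put $a_i=\nti_f(L_i)$. The construction halts at the first index $N$ for which $h^0(L_N-a_NF)=0$. It then remains to verify that each $L_i$ has the stated properties, that the fibre degrees strictly decrease, and that this forces $N<\infty$.

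The first assertions are standard surface geometry. Since $X$ is a smooth surface and $Z_i$ is the fixed part, the system $|L_i|$ has no fixed component, so its base locus carries no divisorial part and is at most zero-dimensional. For nefness, observe that $|L_i|$ is nonempty (being the movable part of a nonempty system) and that any irreducible curve $C$ cannot lie in this finite base locus; hence some member of $|L_i|$ meets $C$ properly, giving $L_i\cdot C\ge 0$. Thus $L_i$ is nef and $a_i=\nti_f(L_i)\ge 1$ is well defined.

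The heart of the matter, and the step I expect to be the main obstacle, is the strict inequality $L_{i-1}F>L_iF$. Since $F$ is a fibre over a curve we have $F^2=0$, so intersecting the linear equivalence $L_{i-1}-a_{i-1}F\sim L_i+Z_i$ with $F$ yields $L_iF=L_{i-1}F-Z_iF$; as $F$ is nef and $Z_i\ge 0$, it suffices to produce a component of $Z_i$ meeting $F$ positively. Here I would use the defining property of $a_{i-1}=\nti_f(L_{i-1})$: the divisor $L_{i-1}-a_{i-1}F$ is not nef while $L_{i-1}-(a_{i-1}-1)F$ is nef. Choosing a curve $C$ with $(L_{i-1}-a_{i-1}F)\cdot C<0$, I claim $C$ must be horizontal. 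Indeed, if $C$ were vertical then $F\cdot C=0$, whence $(L_{i-1}-(a_{i-1}-1)F)\cdot C=(L_{i-1}-a_{i-1}F)\cdot C<0$, contradicting the nefness of $L_{i-1}-(a_{i-1}-1)F$; so $F\cdot C>0$. Because $L_i$ is nef, $(L_i+Z_i)\cdot C<0$ forces $Z_i\cdot C<0$, so $C$ is a component of the effective divisor $Z_i$. Every component of $Z_i$ meets the nef $F$ non-negatively while $C$ meets it positively, so $Z_iF\ge F\cdot C>0$ and hence $L_iF<L_{i-1}F$.

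Finally, each $L_iF$ is a non-negative integer (both $L_i$ and $F$ are integral, and $L_iF\ge 0$ since both are nef), and $L_0F>L_1F>\cdots$ is strictly decreasing; a strictly decreasing sequence of non-negative integers is finite, so the process terminates and the terminal condition $h^0(L_N-a_NF)=0$ is reached, while the hypothesis $LF>0$ ensures the initial fibre degree $L_0F$ is positive so that the filtration is non-trivial. The only genuinely delicate point is this strict decrease, and the argument shows that it is precisely the non-nefness built into the definition of $\nti_f$ that forces the obstructing curve to be horizontal and therefore to appear in the fixed part $Z_i$.
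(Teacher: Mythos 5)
Your proof is correct and is essentially the standard argument behind \cite[Theorem 2.2]{YuanZhang_RelNoether}, to which the paper simply defers: one iterates the movable/fixed decomposition of $|L_{i-1}-a_{i-1}F|$ and terminates by the strict drop in fiber degree. You have correctly identified and handled the one delicate point, namely that the non-nefness of $L_{i-1}-a_{i-1}F$ (together with the nefness of $L_{i-1}-(a_{i-1}-1)F$) forces the obstructing curve to be horizontal and to lie in $Z_i$, so that $Z_iF>0$ and $L_iF<L_{i-1}F$.
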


\begin{proof}
	See \cite[Theorem 2.2]{YuanZhang_RelNoether}.
\end{proof}

\begin{remark} \label{surfaceri}
	A direct consequence of Theorem \ref{filtration2} is that $|L_i|_F|$ is base point free for any $i > 0$. Moreover, if $|L_0|_F| = |L|_F|$ is also base point free, then
	$$
	h^0(L_0|_F) > h^0(L_1|_F) > \cdots > h^0(L_N|_F) \ge 1.
	$$
\end{remark}

All the notation in Theorem \ref{filtration2} will be used in this section. 

\begin{prop} \label{numerical2}
	Suppose that $LF > 0$. Then we have the following numerical inequalities:
	\begin{itemize} 
		\item[(1)] $\displaystyle h^0(L)  \le  \sum_{i=0}^{N} a_i h^0(L_i|_F)$; 
		\item[(2)] $\displaystyle L^2 \ge  2(a_0-1) L_0F + \sum_{i=1}^{N} a_i(L_{i-1}F+L_iF)$.
	\end{itemize}
\end{prop}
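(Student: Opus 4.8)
The plan is to prove both inequalities by summing contributions over the filtration $\{(L_i, Z_i, a_i)\}$ produced by Theorem \ref{filtration2}, using the nef thresholds $a_i = \nti_f(L_i)$ as the bookkeeping device. The structure of the filtration gives, for each step, a decomposition $|L_{i-1} - a_{i-1}F| = |L_i| + Z_i$, and I would extract from each such step both a cohomological estimate (for part (1)) and a numerical intersection estimate (for part (2)), then telescope.

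For part (1), the idea is to bound $h^0(L)$ by peeling off copies of $F$. Since subtracting one fiber $F$ from a nef divisor $D$ fits into the exact sequence $0 \to \CO_X(D - F) \to \CO_X(D) \to \CO_F(D|_F) \to 0$, we get $h^0(D) \le h^0(D - F) + h^0(D|_F)$. The key point is that along the segment where we subtract $F$ from $L_{i-1}$ down to $L_{i-1} - a_{i-1}F$ (that is $a_{i-1}$ subtractions, or rather using $a_i$ copies at level $i$), the restriction $D|_F$ equals $L_i|_F$ up to the fixed part, and $h^0$ of the restriction is controlled by $h^0(L_i|_F)$. Iterating this across all $a_i$ steps at each level $i$, and using that passing from $L_{i-1} - a_{i-1}F$ to $L_i$ only drops sections (we discard the fixed part $Z_i$, which has $h^0 \ge 1$), the total count telescopes into $\sum_{i=0}^N a_i h^0(L_i|_F)$. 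The terminal condition $h^0(L_N - a_N F) = 0$ ensures the sum closes off with no leftover.

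For part (2), I would compute $L^2 = L_0^2$ by expanding along the filtration. Writing each step as $L_{i-1} - a_{i-1}F = L_i + Z_i$, I would estimate $L_{i-1}^2$ from below in terms of $L_i^2$ and the intersection numbers $L_{i-1}F$, $L_i F$. The mechanism is that $L_{i-1}^2 = (L_i + Z_i + a_{i-1}F)\cdot L_{i-1}$; since $F^2 = 0$, $L_{i-1}F$ drops out nicely, and nefness of $L_{i-1}$ together with effectiveness of $Z_i$ and $L_i$ gives $L_{i-1}^2 \ge 2a_{i-1}L_{i-1}F + L_i L_{i-1} \ge 2 a_{i-1} L_{i-1} F + L_i^2 + (\text{cross terms})$. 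Using Proposition \ref{intersectionnumber} to handle the nef products and tracking the fiber-degree identity $L_{i-1}F = L_i F + a_i F \cdot F = L_i F$ (since $F^2=0$, so the fiber degree is preserved under subtracting $F$ but drops when passing to the movable part), I would telescope the quadratic terms. The separation of the $i=0$ term with coefficient $2(a_0 - 1)$ rather than $2a_0$ reflects that at the initial stage one copy of $F$ is absorbed into the boundary behavior of the nef threshold (i.e. $L_0 - a_0 F$ may already fail to be nef), so only $a_0 - 1$ subtractions are genuinely nef.

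The main obstacle I expect is the careful control of the base locus and fixed parts when passing between $L_{i-1} - a_{i-1}F$ and its movable part $L_i$. In part (1) one must ensure that discarding $Z_i$ does not lose the inequality direction and that the restriction sequences remain exact with the claimed bounds even though $L_i$ may have zero-dimensional base locus; in part (2) one must verify that the cross terms $L_i \cdot Z_i$ and $Z_i^2$ contribute with the correct sign, which relies on the nefness of the $L_i$ and the effectiveness of the $Z_i$ but requires checking that no negative self-intersection of a fixed component spoils the bound. Making both telescoping arguments rigorous simultaneously — keeping the cohomological count and the numerical count aligned step-by-step through the same filtration — is where the real work lies, though each individual estimate is elementary.
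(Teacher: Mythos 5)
Your argument for part (1) is essentially the standard one (and the one in the cited source \cite{YuanZhang_RelNoether}): iterate $h^0(D) \le h^0(D-F) + h^0(D|_F)$ using $\CO_F(F)\cong\CO_F$, pass from $L_{i-1}-a_{i-1}F$ to its movable part $L_i$ without changing $h^0$, and close with $h^0(L_N-a_NF)=0$. That part is fine.

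Part (2), however, has a genuine gap. Your intermediate claim $L_{i-1}^2 \ge 2a_{i-1}L_{i-1}F + L_iL_{i-1}$ is false as written: expanding only one factor gives $L_{i-1}^2 = L_{i-1}L_i + L_{i-1}Z_i + a_{i-1}L_{i-1}F$, i.e.\ a \emph{single} copy of $a_{i-1}L_{i-1}F$; to get two copies you must expand the second factor as well, which yields $L_{i-1}^2 \ge L_i^2 + a_{i-1}(L_{i-1}F+L_iF)$. Telescoping that produces $\sum_{i=1}^{N} a_{i-1}(L_{i-1}F+L_iF)$ --- the wrong index alignment: the proposition attaches the coefficient $a_i$ (not $a_{i-1}$) to $L_{i-1}F+L_iF$, and reserves the ``$-1$'' correction $2(a_0-1)L_0F$ for the top level only. (Your alternative of expanding $(L_i+Z_i+a_{i-1}F)^2$ fully indeed founders on $Z_i^2$, as you suspect, so that route is not a repair.) The missing device is the shifted divisors $L_i' := L_i - (a_i-1)F$, which are nef precisely because $a_i = \nti_f(L_i)$; they satisfy $L_{i-1}' = L_i' + a_iF + Z_i$, whence
\begin{equation*}
(L_{i-1}')^2 - (L_i')^2 = (a_iF+Z_i)\bigl(L_{i-1}'+L_i'\bigr) \ge a_iF\bigl(L_{i-1}'+L_i'\bigr) = a_i\bigl(L_{i-1}F+L_iF\bigr),
\end{equation*}
with no $Z_i^2$ ever appearing because the difference of squares is factored rather than expanded. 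Summing over $i$, adding $L_0^2-(L_0')^2 = 2(a_0-1)L_0F$, and discarding $(L_N')^2\ge 0$ gives exactly the stated inequality. This is the same mechanism the paper uses explicitly in its three-dimensional analogues (Propositions \ref{intersection31} and \ref{intersection32}), so you should introduce the $L_i'$ here as well; without them your telescoping proves a true but different inequality, which moreover does not pair correctly with part (1) in the proof of Theorem \ref{surface}.
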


\begin{proof}
	See \cite[Proposition 2.3]{YuanZhang_RelNoether}.
\end{proof}

The main theorem in this section is the following:

\begin{theorem} \label{surface}
	Suppose that  $|L|$ is base point free. 
	\begin{itemize}
		
		\item [(1)] If $LF=0$, then 
		$$
		h^0(L) \le a_0.
		$$
		\item [(2)] If $LF > 0$, then
		$$
		h^0(L) \le \frac{1}{2} L^2 + LF.
		$$
		\item [(3)] If $0 \ne L|_F \le K_F$, then
		$$
		h^0(L) - \frac{1}{4} L^2 \le a_0 + \frac{1}{2} \sum_{i=1}^{N} a_i + \frac{LF}{2}.
		$$
		\item [(4)] If $F$ is non-hyperelliptic and  $0 \ne L|_F < K_F$, then
		$$
		h^0(L) - \frac{1}{4} L^2 \le \frac{1}{2} a_0 + \frac{1}{4} a_N + \frac{LF}{2}.
		$$
	\end{itemize}
\end{theorem}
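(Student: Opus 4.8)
The plan is to deduce all four parts from the numerical inequalities in Proposition \ref{numerical2} together with the strict decreasing chain of fiber-dimensions recorded in Remark \ref{surfaceri}. The arithmetic engine throughout is that each $a_i \ge 1$ and that $h^0(L_0|_F) > h^0(L_1|_F) > \cdots > h^0(L_N|_F) \ge 1$ when $|L|_F|$ is base point free, so the fiber-dimensions $r_i := h^0(L_i|_F)$ are strictly decreasing positive integers. I would first dispose of (1): if $LF = 0$, then $L$ is nef of zero degree on a general fiber, so $L - a_0 F$ fails to be nef already at $a_0 = \nti_f(L)$ and the whole filtration collapses to a single term; part (1) of Proposition \ref{numerical2} then reads $h^0(L) \le a_0 \cdot h^0(L_0|_F)$, and since $LF=0$ with $L$ nef forces $L|_F$ to be trivial or torsion so $h^0(L_0|_F) = 1$, giving $h^0(L) \le a_0$.

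For part (2), I would start from Proposition \ref{numerical2}(2), bound each $h^0(L_i|_F)$ in part (1) by the elementary Clifford-type/degree estimate $h^0(L_i|_F) \le \tfrac{1}{2} L_i F + 1$ (valid since $|L_i|_F|$ is base point free on the genus-$g$ curve $F$), and then combine the two. The key step is to rewrite $L^2 \ge 2(a_0-1)L_0F + \sum a_i(L_{i-1}F + L_iF)$ and feed the bound $h^0(L) \le \sum a_i(\tfrac12 L_i F + 1)$ into it; after using $a_i \ge 1$ to replace the $\sum a_i \cdot 1$ terms and telescoping the $L_iF$ contributions, the inequality $h^0(L) \le \tfrac12 L^2 + LF$ should drop out. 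The bookkeeping is routine once one observes that the strict decrease $L_0F > \cdots > L_N F \ge 0$ lets one pay for the additive constants.

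Parts (3) and (4) are where the main work lies, and the hard part will be extracting the sharper coefficient $\tfrac14$ in front of $L^2$ and the improved right-hand side. The extra hypothesis $0 \ne L|_F \le K_F$ (respectively $< K_F$, with $F$ non-hyperelliptic) is designed to invoke Clifford's inequality in its strong form: for a special divisor $D$ on a curve one has $h^0(D) \le \tfrac12 \deg D + 1$, with the non-hyperelliptic strict Clifford bound $h^0(D) \le \tfrac12 \deg D + \tfrac12$ giving the factor $\tfrac14 a_N$ and the halved $a_0$ in (4). The strategy is to apply Clifford to each $L_i|_F$ (these are special since they are bounded by $K_F$), convert Proposition \ref{numerical2}(1) into $h^0(L) \le \sum a_i(\tfrac12 r_i + \tfrac12)$ or the non-hyperelliptic analogue, and then compare against $\tfrac14 L^2$ using Proposition \ref{numerical2}(2). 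I anticipate the genuine obstacle is the telescoping argument that converts the fiber-degree sum $\sum a_i(L_{i-1}F + L_iF)$ into a clean bound on $\sum a_i r_i$; this requires relating $r_i = h^0(L_i|_F)$ back to the degrees $L_iF$ through Clifford \emph{and} exploiting that $r_i$ decreases by at least one at each step, so that $\sum_{i=1}^N a_i$ can be controlled by $\tfrac12 a_0 + \tfrac14 a_N$ (in the non-hyperelliptic case) after absorbing the intermediate terms. The inequalities $K_F F = (2g-2)$ and the strict monotonicity of the $r_i$ are the two facts that must be balanced to land exactly on the stated coefficients rather than a weaker bound.
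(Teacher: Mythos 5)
Your overall architecture for parts (3) and (4) --- Proposition \ref{numerical2} combined with the strict decrease of $r_i = h^0(L_i|_F)$ from Remark \ref{surfaceri}, using the standard Clifford bound $r_i \le \frac{1}{2} d_i + 1$ in (3) and the sharpened non-hyperelliptic bound $r_i \le \frac{1}{2}(d_i+1)$ for $i < N$ in (4) --- is exactly the paper's proof. One clarification on (4): the intermediate $a_i$ are not ``controlled by $\frac{1}{2} a_0 + \frac{1}{4} a_N$''; rather, they contribute no additive constant at all, because the sharpened Clifford bound carries $+\frac{1}{2}$ instead of $+1$, so the substitution $r_{i-1}+r_i-1 \le \frac{1}{2}(d_{i-1}+d_i)$ for $1 \le i \le N-1$ lets the middle terms be absorbed entirely into $\frac{1}{4}L^2$ via Proposition \ref{numerical2}(2); only the endpoints $i=0$ and $i=N$ leave a residue.

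There are, however, two genuine gaps. The serious one is in part (2): you bound $h^0(L_i|_F) \le \frac{1}{2} L_iF + 1$ and justify it by base point freeness, but Clifford's inequality requires the divisor to be \emph{special} ($h^1 > 0$), and in part (2) there is no hypothesis bounding $L|_F$ by $K_F$, so the $L_i|_F$ may be non-special of large degree, where $h^0 = d_i + 1 - g > \frac{1}{2} d_i + 1$. The correct estimate --- and the one the paper uses --- is the degree bound $r_i \le d_i$ for $i < N$ (valid since $g \ge 2$ and $L_i|_F$ is a nonzero effective class there) together with $r_N \le d_N + 1$; this is precisely what produces the coefficient $\frac{1}{2}$ rather than $\frac{1}{4}$ in front of $L^2$. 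Your route also leaves the term $\sum_i a_i$ dangling, and it is not controlled by $LF$. The second, smaller gap is in part (1): both Theorem \ref{filtration2} and Proposition \ref{numerical2} carry the standing hypothesis $LF > 0$, so you cannot invoke Proposition \ref{numerical2}(1) when $LF = 0$. The paper instead argues directly: base point freeness and $LF = 0$ force $L \sim_{\mathrm{num}} rF$ with $r \ge h^0(L) - 1$, whence $a_0 = \nti_f(L) = r+1 \ge h^0(L)$.
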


\begin{proof}
	We prove (1) first. Since $LF=0$ and $|L|$ is base point free, we may assume that $L \sim_{\mathrm{num}} rF$ for some $r \ge h^0(L) - 1$. Notice that $a_0=\nti_{f}(L)$. By Definition \ref{nefthresholddef}, we know that
	$$
	a_0 = r + 1 \ge h^0(L).
	$$
	Hence (1) is proved.
	
	From now on, we always assume that $LF>0$. Apply Theorem \ref{filtration2} to $L$, and we can get triples
	$$
	\{(L_i, Z_i, a_i) | i=0, \ldots, N\}
	$$
	on $X$ which satisfy the conditions therein. Write
	$$
	r_i = h^0(L_i|_F) \quad \mbox{and} \quad d_i=L_iF.
	$$
	By Theorem \ref{filtration} and Remark \ref{surfaceri}, we know that
	\begin{equation}
		r_0 > r_1 > \cdots > r_N \ge 1. \label{ri>2}
	\end{equation}
	Since $g \ge 2$, by Riemann-Roch, we have
	$$
	r_i \le \left\{ 
	\begin{array} {ll}
	d_i, & \mbox{if \,} i < N; \\
	d_i + 1, & \mbox{if \,} i=N.
	\end{array}
	\right.
	$$
	Therefore, by Proposition \ref{numerical2} and (\ref{ri>2}), we deduce that
	\begin{eqnarray*}
		h^0(L) & \le &  a_0r_0 + \frac{1}{2} \sum_{i=1}^{N} a_i (r_{i-1} + r_i - 1) \\
		& \le & a_0d_0 + \frac{1}{2}\sum_{i=1}^{N} a_i (d_{i-1} + d_i) \\
		& \le & \frac{1}{2} L^2 + LF,
	\end{eqnarray*}
	which is exactly (2).
	
	To prove (3) and (4), we assume that $0 < L|_F \le K_F$. Recall that the standard Clifford's inequality asserts that for any $i$,
	\begin{equation}
	r_i \le \frac{1}{2} d_i + 1. \label{clifford}
	\end{equation}
	Thus Proposition \ref{numerical2}, (\ref{ri>2}) and (\ref{clifford}) imply that
	\begin{eqnarray*}
		h^0(L) & \le &  a_0r_0 + \frac{1}{2} \sum_{i=1}^{N} a_i (r_{i-1} + r_i - 1) \\
		& \le & \left(\frac{1}{2} d_0 + 1 \right)a_0 + \frac{1}{2} \sum_{i=1}^{N} a_i \left(\frac{1}{2}d_{i-1} + 1 + \frac{1}{2} d_i \right) \\
		& = & \frac{1}{2} a_0d_0 + \frac{1}{4} \sum_{i=1}^{N} a_i(d_{i-1}+d_i) + a_0 + \frac{1}{2}\sum_{i=1}^{N} a_i \\
		& \le & \frac{1}{4} L^2 + \frac{LF}{2} + a_0 + \frac{1}{2}\sum_{i=1}^{N} a_i.
	\end{eqnarray*}
	Thus the proof of (3) is completed.
	
	If $F$ is non-hyperelliptic and $L_F < K_F$, we have a shaper Clifford's inequality
	\begin{equation}
	r_i \le \frac{1}{2} d_i + \frac{1}{2} = \frac{1}{2} (d_i + 1) \label{clifford1}
	\end{equation}
	for $0 \le i \le N-1$. Combine this with Proposition \ref{numerical2} and (\ref{ri>2}). Similar to the above, it follows that
	\begin{eqnarray*}
		h^0(L) & \le &  a_0r_0 + \frac{1}{2} \sum_{i=1}^{N} a_i (r_{i-1} + r_i - 1) \\
		& \le & \frac{1}{2} (d_0 + 1) a_0 + \frac{1}{4} \sum_{i=1}^{N-1}a_i (d_{i-1} + d_i) + \frac{1}{4} a_N (d_{N-1} + d_N + 1) \\
		& \le & \frac{1}{4} L^2 + \frac{LF}{2} + \frac{1}{2}a_0 + \frac{1}{4} a_N.
	\end{eqnarray*}
	This proves (4), and the whole proof is completed.
\end{proof}

\begin{prop} \label{pl}
	Suppose that $P$ is a nef $\QQ$-divisor on $X$ such that $P|_F \ge K_F$. Then
	$$
	\frac{PL}{2g-2} \ge \sum_{i=0}^{N} a_i - 1.
	$$
\end{prop}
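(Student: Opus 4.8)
The plan is to telescope the filtration produced by Theorem \ref{filtration2} and pair the resulting identity with $P$. Unwinding the relations $L_i = L_{i-1} - a_{i-1}F - Z_i$ for $i \ge 1$ collapses into the single identity
$$
L = L_0 = L_N + \Big(\sum_{i=0}^{N-1} a_i\Big) F + \sum_{i=1}^{N} Z_i,
$$
so that intersecting with the nef $\QQ$-divisor $P$ gives
$$
PL = PL_N + \Big(\sum_{i=0}^{N-1} a_i\Big) PF + \sum_{i=1}^{N} PZ_i.
$$
I would then estimate the three groups of terms on the right separately.

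For the middle and last sums the bounds are immediate. Since $P|_F \ge K_F$ we have $PF = \deg(P|_F) \ge \deg K_F = 2g-2 > 0$, and since each $Z_i \ge 0$ is effective while $P$ is nef we get $PZ_i \ge 0$. As every $a_i \ge 1$, this already yields $\big(\sum_{i=0}^{N-1} a_i\big) PF \ge (2g-2)\sum_{i=0}^{N-1} a_i$ together with $\sum_{i=1}^{N} PZ_i \ge 0$. The only term requiring an idea is $PL_N$, and here I would invoke the defining property of the nef threshold: since $a_N = \nti_f(L_N) = \rounddown{\mathrm{nt}_f(L_N)} + 1$, the divisor $L_N - (a_N-1)F$ is nef. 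Pairing this nef divisor with the nef divisor $P$, and using $a_N - 1 \ge 0$, gives
$$
PL_N \ge (a_N - 1)\, PF \ge (a_N - 1)(2g-2).
$$

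Assembling the three estimates,
$$
PL \ge (a_N-1)(2g-2) + (2g-2)\sum_{i=0}^{N-1} a_i = (2g-2)\Big(\sum_{i=0}^{N} a_i - 1\Big),
$$
and dividing by $2g-2 > 0$ yields the claim. The main (and essentially the only) obstacle is the treatment of the final coefficient $a_N$: the telescoping identity exhibits only $\big(\sum_{i=0}^{N-1} a_i\big) F$ explicitly, so the contribution of $a_N$ must be recovered from the residual term $PL_N$, which is precisely what the nefness of $L_N - (a_N-1)F$ supplies. Everything else reduces to the nefness of $P$ and $L_N$ and the effectivity of the $Z_i$.
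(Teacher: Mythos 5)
Your proof is correct and follows essentially the same route as the paper: the paper compresses your telescoping identity into the single inequality $L_0 \ge \bigl(\sum_{i=0}^{N} a_i - 1\bigr)F + \bigl(L_N-(a_N-1)F\bigr)$ and then pairs with $P$, using exactly the same three ingredients (effectivity of the $Z_i$, nefness of $L_N-(a_N-1)F$ from the definition of the nef threshold, and $PF \ge 2g-2$). There is no substantive difference.
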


\begin{proof}
	From Theorem \ref{filtration2}, we know that
	$$
	L = L_0 \ge \left(\sum_{i=0}^{N} a_i - 1 \right)F + L_N-(a_N-1)F.
	$$
	Moreover, since $a_N=\nti_f(L_N)$, we know that $L_N-(a_N-1)F$ is nef. Thus it follows that 
	$$
	PL \ge \left(\sum_{i=0}^{N} a_i - 1 \right)PF + \left(L_N - (a_N-1) F \right) P \ge \left(\sum_{i=0}^{N} a_i - 1 \right) PF.
	$$
	The proof is completed as $PF \ge 2g-2$.
\end{proof}

\section{Filtration for nef divisors on fibered varieties over curves}

In Section 3, for any nef divisor $L$ on fibered surfaces, we are able to construct a filtration of nef divisors. Using this filtration, we can estimate $h^0(L)$, $L^2$ and $PL$ respectively. In this section, we generalize these results for surfaces to higher dimensions, particularly, to dimension $3$.

Let $f: X \to B$ be a fibration from a smooth variety $X$ of dimension $n (\ge 3)$ to a smooth curve $B$ defined over $k$ such that the general fiber $F$ is integral, not necessarily smooth. Let $L \ge 0$ be a nef divisor on $X$.

\subsection{The filtration}
Similar to the surface case, by \cite[Theorem 2.3]{Zhang_Severi}, there is a sequence of quadruples
$$
\{(X_i, L_i, Z_i, a_i), \quad i=0, 1, \cdots, N\}
$$
which satisfy the following three conditions:
\begin{itemize}
	\item [(C1)] $(X_0, L_0, Z_0, a_0)=(X, L, 0, \nti_f(L))$.
	\item [(C2)] For any $i=0, \cdots, N-1$, $\pi_i: X_{i+1} \to X_i$ is the blowing up of the base locus of $|L-a_iF_i|$. Here $F_0=F$, $F_{i+1} = \pi^*_iF_i$ and $a_{i+1} = \nti_{f_{i+1}}(L_{i+1})$ where $f_{i+1}: X_{i+1} \stackrel{\pi_i}{\rightarrow} \cdots \stackrel{\pi_0}{\rightarrow} X_0 \stackrel{f}{\rightarrow} B$ is the induced fibration. In particular, we have the following decomposition 
	$$
	\pi_i^*|L_i-a_iF_i| = |L_{i+1}| + Z_{i+1},
	$$
	where $Z_{i+1}$ is the base locus of $\pi_i^*|L_i-a_iF_i|$ and the movable part $|L_{i+1}|$ of $\pi_i^*|L_i-a_iF_i|$ is base point free.
	\item [(C3)] $h^0(L_N-a_NF_N)=0$.
\end{itemize}
We remark here that although \cite[Theorem 2.3]{Zhang_Severi} only concerns the case of characteristic zero, it is easy to check that this result also holds in positive characteristics. For the $n=3$ case in which we are mostly interested, this can be ensured also by the resolution of singularities \cite{Cossart_Piltant_Resolution1,Cossart_Piltant_Resolution2,Cutkosky_Resolution}.

Unlike the surface case, the above quadruples are on different smooth (birational) models of $X$. In order to make all quadruples be on one model as in Theorem \ref{filtration2}, we state the following theorem which can be viewed as a generalization of Theorem \ref{filtration2}, slightly better than \cite[Theorem 2.3]{Zhang_Severi}.

\begin{theorem} \label{filtration}
	Let $f: X \to B$, $F$ and $L$ be as above. Then there is a birational morphism $\sigma: X_L \to X$ and a sequence of triples
	$$
	\{(L_i, Z_i, a_i), \quad i=0, 1, \cdots, N\}
	$$
	on $X_L$ with the following properties:
	\begin{itemize}
		\item $(L_0, Z_0, a_0)=(\sigma^*L, 0, \nti_{f_L}(L_0))$ where $f_L: X_L \stackrel{\sigma}{\to} X \stackrel{f}{\to} B$ is the induced fibration. 
		\item For any $i=0, \cdots, N-1$, we have the decomposition
		$$
		|L_i-a_iF_L|= |L_{i+1}| + Z_{i+1},
		$$
		where $Z_i \ge 0$ is the fixed part of $|L_i-a_iF_i|$ and the movable part $|L_{i+1}|$ of $|L_i-a_iF_i|$ is base point free. Here $F_L = \sigma^*F$ is a general fiber of $f_L$, and $a_{i+1} = \nti_{f_L}(L_{i+1})$.
		\item $h^0(L_N-a_NF_N)=0$.
	\end{itemize}
\end{theorem}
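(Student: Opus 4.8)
The plan is to take $X_L$ to be the top model $X_N$ produced by the quadruple construction of \cite[Theorem 2.3]{Zhang_Severi}, and to transport every object down onto it by pullback. Writing $\rho_i : X_N \to X_i$ for the composition $\pi_i \circ \pi_{i+1} \circ \cdots \circ \pi_{N-1}$ of the successive blow-ups (so that $\rho_0 = \sigma$ is the structural morphism $X_L = X_N \to X_0 = X$, $\rho_N = \mathrm{id}$, and $\rho_i = \pi_i \circ \rho_{i+1}$), I would define the triples on $X_L$ by
$$
\widetilde{L}_i := \rho_i^* L_i, \qquad \widetilde{Z}_i := \rho_i^* Z_i,
$$
keeping the integers $a_i$ unchanged; these $\widetilde L_i, \widetilde Z_i$ are the triples asserted by the theorem. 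Since $F_{i+1} = \pi_i^* F_i$, an immediate induction gives $\rho_i^* F_i = \sigma^* F = F_L$ for every $i$, so that all the fibers $F_i$ collapse to the single divisor $F_L$ on $X_L$; this is precisely what allows all the triples to be stated on one model.

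First I would dispose of the bookkeeping points. The initial triple is correct because $\rho_0^* L_0 = \sigma^* L$ and $\rho_0^* Z_0 = 0$. For the nef thresholds, Proposition \ref{nefthreshold} applied to the birational morphism $\rho_i$ (noting $f_L = f_i \circ \rho_i$) yields $\nti_{f_L}(\rho_i^* L_i) = \nti_{f_i}(L_i) = a_i$, which matches the required normalization $a_i = \nti_{f_L}(\widetilde L_i)$. The terminal condition is literally (C3), since $\rho_N = \mathrm{id}$ forces $\widetilde L_N = L_N$ and $\rho_N^* F_N = F_L = F_N$.

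The substance is the decomposition step. Using $\rho_i = \pi_i \circ \rho_{i+1}$ and $\rho_i^* F_i = F_L$, I rewrite
$$
\widetilde L_i - a_i F_L = \rho_i^*(L_i - a_i F_i) = \rho_{i+1}^* \bigl( \pi_i^*(L_i - a_i F_i) \bigr).
$$
On $X_{i+1}$, condition (C2) already exhibits $\pi_i^*(L_i - a_i F_i)$ as a base-point-free movable part $L_{i+1}$ plus its fixed part $Z_{i+1}$. Thus the task reduces to a single lemma: for a proper birational morphism $\rho : X' \to X$ between smooth varieties and a divisor $D$ on $X$ whose linear system decomposes as $|D| = |M| + Z$ with $|M|$ base point free and $Z$ its fixed part, one has $|\rho^* D| = |\rho^* M| + \rho^* Z$ with $|\rho^* M|$ base point free and $\rho^* Z$ the fixed part. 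Granting this with $\rho = \rho_{i+1}$ and $D = \pi_i^*(L_i - a_i F_i)$, the movable part pulls back to $\rho_{i+1}^* L_{i+1} = \widetilde L_{i+1}$ (still base point free) and the fixed part to $\rho_{i+1}^* Z_{i+1} = \widetilde Z_{i+1}$, giving exactly the asserted decomposition $|\widetilde L_i - a_i F_L| = |\widetilde L_{i+1}| + \widetilde Z_{i+1}$.

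The only genuine content, and the step I would treat most carefully, is this lemma. Its proof rests on the fact that $\rho_* \CO_{X'} = \CO_X$ for a proper birational morphism onto a smooth (hence normal) variety, so that by the projection formula $H^0(X', \rho^* \CO_X(D)) = H^0(X, \CO_X(D))$ and no new sections appear. Consequently every member of $|\rho^* D|$ is the pullback $\rho^* D'$ of a member $D' \in |D|$, hence contains $\rho^* Z$, while the common part of all such members equals $\rho^*\!\left(\bigcap_{D' \in |D|} D'\right) = \rho^* Z$; together with the elementary fact that the pullback of a base-point-free system is base point free, this identifies $\rho^* Z$ as the fixed part and $|\rho^* M|$ as the base-point-free movable part. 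I expect no real obstruction beyond careful tracking of which divisor lives on which blow-up: the whole argument uses only Proposition \ref{nefthreshold} and this pullback lemma, both insensitive to $\charr k$, so it is uniform in all characteristics.
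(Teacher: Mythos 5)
Your proposal is correct and follows essentially the same route as the paper: take $X_L = X_N$, pull all the $L_i$, $Z_i$ back to it, and invoke Proposition \ref{nefthreshold} to see that the nef thresholds are unchanged. The paper's proof is terser (it leaves the pullback behaviour of the fixed/movable decomposition implicit), whereas you spell out the lemma via $\rho_*\CO_{X'} = \CO_X$ and the projection formula, but there is no difference in substance.
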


\begin{proof}
	By \cite[Theorem 2.3]{YuanZhang_RelNoether}, we can get the following quadruples 
	$$
	\{(X_i, L_i, Z_i, a'_i), \quad i=0, 1, \cdots, N\}
	$$
	which satisfy the condition (C1) -- (C3). In particular, $a'_i = \nti_{f_i}(L_i)$.
	
	Now take $X_L=X_N$ and replace $L_i$ and $Z_i$ by their pullbacks onto $X_L$, by which we get a sequence of triples
	$$
	\{(L_i, Z_i, a'_i), \quad i=0, 1, \cdots, N\}
	$$
	on $X_L$. We will show that these triples satisfy the required properties. 
	
	In fact, it suffices to show that 
	$$
	a'_i = \nti_{f_L}(L_i)
	$$
	for any $i$. However, this equality simply follows from Proposition \ref{nefthreshold}. Hence the proof is completed.
\end{proof}

\subsection{Numerical inequalities}
For any nef divisor $L \ge 0$ on $X$, by Theorem \ref{filtration}, we obtain a sequence of triples
$$
\{(L_i, Z_i, a_i), \quad i=0, 1, \cdots, N\}
$$
on a birational model $X_L$ of $X$. For simplicity, we still denote by $F$ a general fiber of $f_L: X_L \to B$.

\begin{prop} \label{h0}
	We have
	$$
	h^0(L) \le \sum_{i=0}^{N} a_i h^0(L_i|_F).
	$$
\end{prop}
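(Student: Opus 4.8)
The plan is to imitate the surface estimate of Proposition \ref{numerical2}(1), with the key observation that in the higher-dimensional setting the dimension of $X$ plays no role: subtracting one copy of the fiber $F$ from a linear system costs at most $h^0(L_i|_F)$ at each stage, exactly as for curves. Throughout I work on the model $X_L$ equipped with the triples $\{(L_i, Z_i, a_i)\}$ produced by Theorem \ref{filtration}, and I first record that $h^0(L) = h^0(\sigma^* L) = h^0(L_0)$: since $\sigma : X_L \to X$ is a birational morphism of smooth varieties we have $\sigma_* \CO_{X_L} = \CO_X$, so the projection formula identifies the two spaces of sections.

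First I would establish an elementary slicing inequality. As $F$ is a general fiber of $f_L : X_L \to B$ over a smooth curve, it is an effective Cartier divisor with $\CO_{X_L}(F)|_F \cong \CO_F$; hence for any divisor $D$ and any $j \ge 0$ we have $(D - jF)|_F = D|_F$, and the restriction sequence
\[
0 \to \CO_{X_L}(D - (j+1)F) \to \CO_{X_L}(D - jF) \to \CO_F(D|_F) \to 0
\]
yields $h^0(D - jF) \le h^0(D - (j+1)F) + h^0(D|_F)$. Summing over $0 \le j \le a-1$ gives
\[
h^0(D) \le h^0(D - aF) + a \cdot h^0(D|_F)
\]
for every integer $a \ge 0$. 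This is the only place where the geometry enters, and it is dimension-independent.

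Next I would set up the recursion and telescope. Applying the slicing inequality with $D = L_i$ and $a = a_i$ gives $h^0(L_i) \le h^0(L_i - a_i F) + a_i h^0(L_i|_F)$. For $i < N$, the decomposition $|L_i - a_i F| = |L_{i+1}| + Z_{i+1}$ from Theorem \ref{filtration} identifies $L_{i+1}$ as the movable part, so adding the fixed divisor $Z_{i+1}$ is a bijection on sections and $h^0(L_i - a_i F) = h^0(L_{i+1})$; thus $h^0(L_i) \le h^0(L_{i+1}) + a_i h^0(L_i|_F)$. For $i = N$ the third condition of Theorem \ref{filtration} gives $h^0(L_N - a_N F) = 0$, so $h^0(L_N) \le a_N h^0(L_N|_F)$. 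Combining the recursions for $i = 0, \ldots, N-1$ with this last bound produces
\[
h^0(L_0) \le h^0(L_N) + \sum_{i=0}^{N-1} a_i h^0(L_i|_F) \le \sum_{i=0}^{N} a_i h^0(L_i|_F),
\]
which together with $h^0(L) = h^0(L_0)$ is the claim.

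As for the main obstacle, there is in fact very little difficulty here once the filtration of Theorem \ref{filtration} is granted: all the substantial work of the section is hidden in the construction of that filtration. The only two points requiring genuine care are the normalization $\CO_F(F) \cong \CO_F$ (which guarantees that each slice contributes precisely $h^0(L_i|_F)$, not some twist of it) and the equality $h^0(L_i - a_i F) = h^0(L_{i+1})$ coming from the movable/fixed decomposition; the remainder is bookkeeping.
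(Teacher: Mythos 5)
Your proof is correct, and it is exactly the standard telescoping argument (restriction to the fiber $F$ with $\CO_F(F)\cong\CO_F$, plus the movable/fixed decomposition giving $h^0(L_i-a_iF)=h^0(L_{i+1})$, terminating with $h^0(L_N-a_NF)=0$) that the paper delegates to the citation of \cite[Proposition 2.6 (1)]{Zhang_Severi}; it is also characteristic-free, as the paper asserts. No gaps.
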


\begin{proof}
	See \cite[Proposition 2.6 (1)]{Zhang_Severi} in the characteristic zero case. Notice that the proof there applies in positive characteristics verbatim.
\end{proof}

For any $0 \le i \le N$, write
$$
L'_i := L_i-(a_i-1)F.
$$
It is easy to see from (\ref{nefthresholddef}) that $L'_i$ is nef on $X_L$ and 
$$
L'_i = a_{i+1} F + Z_{i+1}+L'_{i+1}.
$$

From now on, we always assume that $n = \dim X = 3$.
\begin{prop} \label{intersection31}
	For any $1 \le i \le N$, we have
	\begin{eqnarray*}
		L^3 - L'^3_i & = &  3(a_0-1)L^2_0F + \sum_{j=1}^{i} a_i(L^2_{j-1} + L_{j-1}L_{j} + L^2_j)F \\
		& & + \sum_{j=1}^{i} (L'^2_{j-1} + L'_{j-1}L'_{j} + L'^2_{j})Z_j.
	\end{eqnarray*}
\end{prop}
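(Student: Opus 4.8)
The plan is to telescope the difference $L^3 - L'^3_i$ along the filtration, relying only on the structural identity $L'_{j-1} = a_j F + Z_j + L'_{j}$ recorded just above the statement and on the numerical vanishing $F^2 = 0$, which holds because $F$ is a fiber of $f_L$ over the curve $B$ (so that $F \equiv f_L^*(\mathrm{pt})$ and hence $F^2 \equiv 0$). No positivity is needed: the assertion is an equality of intersection numbers, so it is a purely formal consequence of these two facts.

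First I would extract the initial term. Writing $L_0 = L'_0 + (a_0-1)F$ and expanding $L^3 = L_0^3$, every monomial containing $F^2$ vanishes, leaving
\[
L^3 - L'^3_0 = 3(a_0-1) L'^2_0 F.
\]
Using $F^2 = 0$ once more inside $L'^2_0 F = (L_0 - (a_0-1)F)^2 F$, the right-hand side becomes $3(a_0-1)L_0^2 F$, which is precisely the first term of the claimed formula.

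For the remaining terms I would apply the factorization $A^3 - B^3 = (A-B)(A^2 + AB + B^2)$ in the numerical ring with $A = L'_{j-1}$ and $B = L'_j$. Since $L'_{j-1} - L'_j = a_j F + Z_j$, this gives
\[
L'^3_{j-1} - L'^3_j = (a_j F + Z_j)\bigl(L'^2_{j-1} + L'_{j-1}L'_j + L'^2_j\bigr).
\]
The $Z_j$-contribution is already in the desired shape. For the $a_j F$-contribution, applying $F^2 = 0$ replaces each factor $L'_a$ by $L_a$ inside the fiber intersections, so that $F\,L'_a L'_b = F\,L_a L_b$, yielding $a_j(L^2_{j-1} + L_{j-1}L_j + L^2_j)F$.

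Finally, summing over $j = 1, \ldots, i$ the left-hand sides telescope to $L'^3_0 - L'^3_i$, and adding the initial term $L^3 - L'^3_0$ produces exactly the stated identity. The computation is entirely mechanical; the only point demanding attention is the consistent use of $F^2 = 0$ to pass between the $L'_i$ and the $L_i$ inside the fiber intersections. I therefore do not expect a substantive obstacle here—this is a bookkeeping identity rather than an inequality, and the genuinely delicate work lies in the numerical estimates that will later be built upon it.
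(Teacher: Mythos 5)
Your proposal is correct and follows essentially the same route as the paper: telescoping via $A^3-B^3=(A-B)(A^2+AB+B^2)$ with $L'_{j-1}-L'_j=a_jF+Z_j$, and converting $L'$ to $L$ in the fiber terms (the paper phrases this as $L_j|_F=L'_j|_F$, which is the same fact as your $F^2=0$ argument). No substantive difference.
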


\begin{proof}
	This is a direct computation. For any $0 \le j \le i-1$, we have
	\begin{eqnarray*}
		L'^3_j - L'^3_{j+1} & = & (L'_j-L'_{j+1})(L'^2_j +L'_jL'_{j+1}+L'^2_{j+1}) \\
		& = & (a_{j+1}F + Z_{j+1})(L'^2_j+L'_jL'_{j+1}+L'^2_{j+1}).
	\end{eqnarray*}
	Since $L_j|_F = L'_j|_F$, it is easy to check that 
	$$
	(L'^2_j+L'_jL'_{j+1}+L'^2_{j+1})F = (L^2_j+L_jL_{j+1}+L^2_{j+1})F.
	$$
	Summing up for all $j$, it follows that
	$$
	L'^3_0 - L'^3_N = \sum_{j=1}^{N} a_j(L^2_{j-1} + L_{j-1}L_{j} + L^2_j)F + \sum_{j=1}^{N} (L'^2_{j-1} + L'_{j-1}L'_{j} + L'^2_{j})Z_j.
	$$
	Notice that 
	$$
	L^3_0 - L'^3_0 = 3(a_0-1)L^2_0F.
	$$
	Hence the proof is completed.
\end{proof}

\begin{prop} \label{intersection32}
	Let $P$ be a nef $\QQ$-divisor on $X$ and $P_0= \sigma^* P$. Then
	$$
	PL^2 \ge 2(a_0-1)P_0L_0F + \sum_{i=1}^{N} a_i (P_0L_{i-1}+P_0L_i)F + \sum_{i=1}^{N} P_0(L'_{i-1}+L'_i)Z_i.
	$$
\end{prop}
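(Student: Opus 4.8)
The plan is to imitate the equality computation of Proposition \ref{intersection31} almost verbatim, but with one of the three factors replaced by the nef divisor $P_0$, and to observe that the single inequality in the statement comes from discarding one nonnegative tail term at the end. The starting point is that intersection numbers are preserved under the birational pullback $\sigma$, so $PL^2 = P_0 L_0^2$, where $L_0 = \sigma^* L$. Hence it suffices to work entirely on $X_L$ with the triples $\{(L_i, Z_i, a_i)\}$ furnished by Theorem \ref{filtration}, and to expand $P_0 L_0^2$ by telescoping along the filtration exactly as in Proposition \ref{intersection31}.

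First I would run the telescoping step. Using $L'_j = a_{j+1}F + Z_{j+1} + L'_{j+1}$, so that $L'_j - L'_{j+1} = a_{j+1}F + Z_{j+1}$, I compute
$$
P_0 L'^2_j - P_0 L'^2_{j+1} = P_0(a_{j+1}F + Z_{j+1})(L'_j + L'_{j+1}) = a_{j+1} P_0 F (L'_j + L'_{j+1}) + P_0 Z_{j+1}(L'_j + L'_{j+1}).
$$
The key simplification is that $F^2 = 0$ on $X_L$, since $F$ is the pullback of a point of $B$; therefore $P_0 F^2 = 0$, and because $L'_j - L_j = -(a_j-1)F$ is a multiple of $F$, one gets $P_0 F L'_j = P_0 F L_j$. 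This lets me replace $L'$ by $L$ in the fiber terms while leaving the base-locus terms in terms of $L'$, which is precisely the asymmetry appearing in the statement. Summing over $j = 0, \ldots, N-1$ yields
$$
P_0 L'^2_0 - P_0 L'^2_N = \sum_{i=1}^{N} a_i P_0 F(L_{i-1}+L_i) + \sum_{i=1}^{N} P_0(L'_{i-1}+L'_i)Z_i.
$$

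Next I would handle the initial step separately: since $L'_0 = L_0 - (a_0-1)F$ and $P_0 F^2 = 0$, expanding gives $P_0 L_0^2 - P_0 L'^2_0 = 2(a_0-1)P_0 L_0 F$. Combining this with the telescoped identity produces an exact formula for $PL^2 = P_0 L_0^2$ whose terms are the three sums on the right-hand side of the proposition plus the leftover term $P_0 L'^2_N$. The only genuine (non-mechanical) point, and thus the main obstacle such as it is, is to justify dropping this tail: I would note that $P_0 = \sigma^* P$ is nef as the pullback of a nef divisor, and $L'_N$ is nef by construction (this is the observation recorded just before the proposition, coming from $a_N = \nti_{f_L}(L_N)$), so the triple intersection $P_0 L'^2_N$ of three nef divisors on the $3$-fold $X_L$ is nonnegative. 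Discarding it converts the equality into the desired inequality, completing the proof. Everything else is the same routine intersection-number bookkeeping as in Proposition \ref{intersection31}.
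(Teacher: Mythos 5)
Your proposal is correct and follows the same route as the paper: telescoping $P_0L'^2_i - P_0L'^2_{i+1}$ along the filtration, using $F^2=0$ to replace $L'$ by $L$ in the fiber terms, handling the initial step $P_0L_0^2 - P_0L'^2_0 = 2(a_0-1)P_0L_0F$ separately, and discarding the nonnegative tail $P_0L'^2_N$. The paper only sketches these steps by reference to Proposition \ref{intersection31}; you have filled them in accurately.
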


\begin{proof}
	The proof is similar to the one before, so we sketch it here. For any $0 \le i \le N-1$, we have
	\begin{eqnarray*}
		P_0L'^2_i - P_0L'^2_{i+1} & = & P_0(L'_i - L'_{i+1})(L'_i + L'_{i+1}) \\
		& = & P_0(a_{i+1}F + Z_{i+1})(L'_i + L'_{i+1}).
	\end{eqnarray*}
	As in the proof of Proposition \ref{intersection31}, we can check that the following three (in)equalities hold:
	\begin{eqnarray*}
		P_0(L'_i + L'_{i+1}) F & = & P_0(L_i + L_{i+1}) F, \\
		P_0L'^2_N & \ge & 0, \\
		P_0L^2_0-P_0L'^2_0 & = & 2P_0(a_0-1)L_0F.
	\end{eqnarray*}
	Hence the proposition follows immediately.
\end{proof}

\begin{prop} \label{sumai}
	Let $P$ be a nef $\QQ$-divisor on $X$ such that $P \ge L$. Then 
	$$
	P^3 \ge \left(\sum_{i=0}^{N} a_i -1 \right) P^2F.
	$$
\end{prop}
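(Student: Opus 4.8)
The plan is to run the same telescoping argument that proves Proposition \ref{pl}, but now in dimension three and with everything kept on the birational model $X_L$ produced by Theorem \ref{filtration}. First I would pull $P$ back, setting $P_0 = \sigma^* P$. Since $\sigma$ is birational, intersection numbers are unchanged, so $P^3 = P_0^3$ and $P^2 F = P_0^2 F$ (writing $F$ also for the general fibre $\sigma^* F$ of $f_L$); moreover $P_0$ is again nef, and $P_0 \ge L_0 = \sigma^* L$ because $P - L$ is effective on $X$. Thus it suffices to prove $P_0^3 \ge \left(\sum_{i=0}^N a_i - 1\right) P_0^2 F$ on $X_L$.

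The core computation is a telescoping of the relation $L'_i = a_{i+1}F + Z_{i+1} + L'_{i+1}$ recorded before Proposition \ref{intersection31}, where $L'_i = L_i - (a_i-1)F$. Summing over $i = 0, \ldots, N-1$ gives $L'_0 = \sum_{i=1}^N a_i F + \sum_{i=1}^N Z_i + L'_N$, and since $L_0 = L'_0 + (a_0-1)F$ this yields the identity
$$
L_0 = \left(\sum_{i=0}^N a_i - 1\right) F + \sum_{i=1}^N Z_i + L'_N.
$$
Here each $Z_i$ is effective, each $a_i \ge 1$ so the coefficient of $F$ is nonnegative, the fibre $F$ is nef, and $L'_N = L_N - (a_N-1)F$ is nef by the definition of $a_N = \nti_{f_L}(L_N)$.

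Finally I would chain three elementary positivity estimates. Applying Proposition \ref{intersectionnumber} to the nef divisors $L_0 \le P_0$ gives $P_0^3 \ge P_0^2 L_0$. Intersecting the displayed identity with $P_0^2$ and discarding the nonnegative term $P_0^2 \sum Z_i$ gives $P_0^2 L_0 \ge \left(\sum a_i - 1\right) P_0^2 F + P_0^2 L'_N$, and $P_0^2 L'_N \ge 0$ because $P_0, P_0, L'_N$ are all nef. Combining the three steps and translating back to $X$ via $\sigma$ yields the claim.

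The argument is essentially routine, so the only real point of care — the main ``obstacle'' — is the bookkeeping on $X_L$: one must check that $P_0 \ge L_0$ genuinely survives pullback, that $P_0^2 \cdot (\text{effective}) \ge 0$ and $P_0^2 L'_N \ge 0$ are legitimate (both following from nefness, or directly from Proposition \ref{intersectionnumber}), and that $\sum a_i - 1 \ge 0$ so that no sign is lost when the $F$-term is isolated. It is worth noting that, unlike Proposition \ref{pl}, no hypothesis of the form $P|_F \ge K_F$ is required here: the inequality is purely numerical and follows from $P \ge L$ alone.
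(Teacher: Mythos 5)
Your argument is correct and is essentially the paper's own proof: the paper likewise derives $L_0 \ge \bigl(\sum_{i=0}^{N} a_i - 1\bigr)F + L'_N$ from the decomposition in Theorem \ref{filtration} and then concludes via $P^3 = P_0^3 \ge P_0^2 L_0 \ge \bigl(\sum_{i=0}^{N} a_i - 1\bigr)P_0^2 F$. Your version merely spells out the telescoping and the positivity bookkeeping in more detail.
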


\begin{proof}
	By the decomposition in Theorem \ref{filtration}, we have
	$$
	L_0 \ge \left(\sum_{i=0}^{N} a_i -1 \right) F + L'_N.
	$$
	Let $P_0 = \sigma^* P$. It follows that
	$$
	P^3 = P_0^3 \ge P_0^2 L_0 \ge \left(\sum_{i=0}^{N} a_i -1 \right) P_0^2F = \left(\sum_{i=0}^{N} a_i -1 \right) P^2F.
	$$
	Thus the proof is completed.
\end{proof}


\section{Linear system on $2$-towers of curve fibrations}

Within this section, we assume that $X$ is a smooth $3$-fold defined over $k$ which satisfies the following commutative diagram:
$$
\xymatrix{
	X  \ar[r]^{f} \ar[rd]_{\pi} & Y \ar[d]^{h} \\
	& B
}
$$
Here $\pi: X \to B$ is a fibration from $X$ to a smooth curve $B$ with a general fiber $F$ integral, $f: X \to Y$ is a fibration from $X$ to a smooth surface $Y$ such that the general fiber $C$ is a smooth curve of genus $g \ge 2$, and $h: Y \to B$ is a surface fibration from $Y$ to $B$ with smooth general fibers. In other words, $\pi: X \to B$ is a $2$-tower of curve fibrations.

Notice that $f|_F$ induces a fibered structure on $F$ and $f(F)$ is in fact a general fiber of $h$. Therefore, we may assume that $C$ is also a general fiber of $f|_F$ on $F$ and $f(F)$ is smooth.

\subsection{Set up}
Suppose that $L \ge 0$ is a nef divisor on $X$. In the following, we construct a double filtration of $L$ with respect to the fibrations $\pi$ and $f|_F$. All notation here will be used throughout this section.

\subsubsection{The double filtration}
By Theorem \ref{filtration}, there is a birational morphism $\sigma_X: X_L \to X$ such that we have a sequence of triples
$$
\{(L_i, Z_i, a_i), \quad i=0, 1, \cdots, N\}
$$
on $X_L$ which satisfies the conditions therein. Let $f_L: X_L \stackrel{\sigma_X}{\to} X \stackrel{f}{\to} Y$ and $\pi_L: X_L \stackrel{\sigma_X}{\to} X \stackrel{\pi}{\to} B$ be the induced fibrations. By abuse of the notation, we still denote by $C$ a general fiber of $f_L$ and by $F$ a general fiber of $\pi_L$. Write 
$$
L'_i = L_i - (a_i-1)F.
$$
By (\ref{nefthresholddef}), $L'_i$ is nef. Moreover, we know that
\begin{equation}
	L'_i = a_{i+1}F + Z_{i+1} + L'_{i+1}. \label{li'}
\end{equation}

Let $\sigma_F : \tilde{F} \to F$ be a resolution of singularities on $F$ whose existence is ensured by \cite{Lipman_Resolution}. Then $\tilde{f}: \tilde{F} \stackrel{\sigma_F}{\to} F \to f(F)$ is a fibration of $\tilde{F}$. Denote by $\tilde{C}$ a general fiber of $\tilde{f}$. Then $\tilde{C}$ is also smooth and $g(\tilde{C}) = g$. 

For each $0 \le i \le N$, write $L_{i, 0} = \sigma^*_F (L_i|_F)$. By Theorem \ref{filtration2}, for any $i$, we have a sequences of triples
$$
\{(L_{i, j}, Z_{i, j}, a_{i, j}), \quad j=0, 1, \cdots, N_i\}
$$
on $\tilde{F}$ satisfying the conditions therein. Write
$$
L'_{i, j} = L_{i, j} - (a_{i, j} - 1) \tilde{C}.
$$
Similarly, $L'_{i, j}$ is nef and
\begin{equation}
	L'_{i, j} = a_{i, j+1} \tilde{C} + Z_{i, j+1} + L'_{i, j+1}. \label{lij'}
\end{equation}

\subsubsection{Horizontal base loci}
For any $1 \le i \le N$, we can decompose $Z_i$ as 
$$
Z_i = Z_i^H + Z_i^V,
$$
where $Z_i^H$ and $Z_i^V$ are respectively the horizontal and vertical part of $Z_i$ with respect to $f_L: X_L \to Y$. Let $\lambda_1 < \lambda_2 < \cdots < \lambda_l$ be all indices in $\{1, \cdots, N \}$ such that 
$$
Z_{\lambda_q}^H > 0
$$
for $1 \le q \le l$ and denote 
$$
\delta_q = \deg(Z_{\lambda_q}|_C).
$$
For our convenience, we denote $\lambda_0 = 0$ and $\lambda_{l+1} = N+1$.

\subsection{Numerical inequalities}
In this subsection, we prove some numerical results related to the horizontal base loci.
\begin{lemma} \label{horizontal1}
	Let $D \ge 0$ be a horizontal divisor on $X_L$ with respect to $f_L$ and $\delta = \deg(D|_C)$. Then for any $0 \le i \le N$, we have
	$$
	L_iDF \ge \delta (a_{i, 0} - 1). 
	$$
\end{lemma}

\begin{proof}
	Let $\tilde{D} = \sigma_F^*(D|_F)$. Then 
	$$
	\tilde{D}\tilde{C} = \deg(D|_C) = \delta.
	$$
	Therefore, it follows from (\ref{lij'}) that
	$$
	L_i D F = L_{i, 0} \tilde{D} = L'_{i, 0} \tilde{D} + (a_{i, 0} - 1) \tilde{C} \tilde{D} \ge \delta(a_{i, 0}-1).
	$$
	The proof is completed.
\end{proof}

\begin{lemma} \label{horizontal2}
	Let $D \ge 0$ be a horizontal divisor on $X_L$ with respect to $f_L$ and $\delta = \deg(D|_C)$ For any $0 \le q \le l$, if $\lambda_{q+1}-\lambda_q > 1$, then 
	$$
	L'^2_{\lambda_q} D \ge 2 \sum_{i=\lambda_q+1}^{\lambda_{q+1}-1} a_i L_i D F \ge 2 \delta \sum_{i=\lambda_q+1}^{\lambda_{q+1}-1} a_i (a_{i, 0} - 1).
	$$
\end{lemma}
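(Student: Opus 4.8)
The plan is to prove the first inequality $L'^2_{\lambda_q}D \ge 2\sum_{i=\lambda_q+1}^{\lambda_{q+1}-1} a_i L_i DF$ by telescoping a difference of squares down the filtration, and then to get the second inequality for free: applying Lemma \ref{horizontal1} termwise gives $L_i DF \ge \delta(a_{i,0}-1)$ for each $i$ in the range, so summing against the weights $2a_i$ yields the claimed bound $2\delta\sum a_i(a_{i,0}-1)$.

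For the first inequality I would exploit the recursion (\ref{li'}), which gives $L'_i - L'_{i+1} = a_{i+1}F + Z_{i+1}$. Telescoping over the range,
\begin{align*}
L'^2_{\lambda_q}D - L'^2_{\lambda_{q+1}-1}D &= \sum_{i=\lambda_q}^{\lambda_{q+1}-2}(L'_i - L'_{i+1})(L'_i + L'_{i+1})D \\
&= \sum_{i=\lambda_q}^{\lambda_{q+1}-2}(a_{i+1}F + Z_{i+1})(L'_i + L'_{i+1})D.
\end{align*}
The decisive structural input is that every index $i+1$ appearing here lies strictly between $\lambda_q$ and $\lambda_{q+1}$, so by the very definition of the $\lambda_q$ each such $Z_{i+1}$ carries no horizontal part, i.e. is vertical with respect to $f_L$. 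Since $D$ is horizontal, $Z_{i+1}$ and $D$ share no component, and hence $Z_{i+1}\cdot D$ is an effective $1$-cycle on the smooth $3$-fold $X_L$.

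I would then handle the two pieces of each summand separately. For the $F$-piece, using $F^2D=0$ (as $F$ is a fiber of $\pi_L$) I can replace each $L'_j$ by $L_j$, so that $F(L'_i+L'_{i+1})D = (L_i+L_{i+1})FD$; moreover the recursion gives $L_i - L_{i+1} = a_iF + Z_{i+1}$, whence $L_iFD - L_{i+1}FD = Z_{i+1}FD \ge 0$ because $F$ is nef and $Z_{i+1}\cdot D$ is effective. This produces $a_{i+1}F(L'_i+L'_{i+1})D \ge 2a_{i+1}L_{i+1}FD$. For the $Z$-piece, $Z_{i+1}(L'_i+L'_{i+1})D \ge 0$ for the same reason, since $L'_i,L'_{i+1}$ are nef and $Z_{i+1}\cdot D$ is effective. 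Discarding the nonnegative boundary term $L'^2_{\lambda_{q+1}-1}D \ge 0$ (a nef class squared against an effective divisor) and reindexing $j=i+1$ then gives precisely $L'^2_{\lambda_q}D \ge 2\sum_{j=\lambda_q+1}^{\lambda_{q+1}-1}a_jL_jFD$.

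The only genuine obstacle is the verticality bookkeeping: one must be certain that for $\lambda_q < i+1 < \lambda_{q+1}$ the fixed part $Z_{i+1}$ truly has no horizontal component — which is exactly how the $\lambda_q$ were selected — and that the hypothesis $\lambda_{q+1}-\lambda_q > 1$ guarantees the telescoped range is nonempty, so that the boundary term $L'^2_{\lambda_{q+1}-1}D$ is legitimately present. Everything else reduces to the elementary facts that the intersection of two effective divisors with no common component is an effective $1$-cycle and that a nef divisor pairs nonnegatively with an effective cycle.
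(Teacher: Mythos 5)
Your proof is correct and follows essentially the same route as the paper: both rest on the observation that $Z_{i+1}$ is vertical for $\lambda_q<i+1<\lambda_{q+1}$ (so $Z_{i+1}\cdot D$ is effective and pairs nonnegatively with nef classes), the identity $L'_i-L'_{i+1}=a_{i+1}F+Z_{i+1}$, $F^2D=0$, and then Lemma \ref{horizontal1} for the second inequality. The only cosmetic difference is that you telescope the difference of squares globally and then use the monotonicity $L_iFD\ge L_{i+1}FD$, whereas the paper extracts $2a_iL'_iDF$ directly from a single-step expansion before summing.
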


\begin{proof}
	The key point here is that $Z_{i}$ is vertical with respect to $f_L$ for any $\lambda_q + 1 \le i \le \lambda_{q+1} - 1 $. It means that $Z_i$ and $D$ intersect with each other properly. In particular, for any nef divisor $M$ on $X_L$, the intersection number
	$$
	MDZ_i \ge 0.
	$$
	Combine this observation with (\ref{li'}). Then we get
	\begin{eqnarray*}
		L'^2_{i-1} D & = & L'_{i-1} (a_{i}F + Z_{i} + L'_{i}) D \\
		& \ge & L'_{i-1} (a_iF+L'_{i}) D \\
		& = & (a_{i}F + Z_{i} + L'_{i}) (a_iF+L'_{i}) D \\
		& \ge & 2 a_{i} L'_{i} D F + L'^2_{i} D.
	\end{eqnarray*}
	Summing over all above $i$, we obtain
	$$
	L'^2_{\lambda_q} D \ge 2 \sum_{i=\lambda_q+1}^{\lambda_{q+1}-1} a_i L_i D F + L'^2_{\lambda_{q+1}-1}D \ge 2 \sum_{i=\lambda_q+1}^{\lambda_{q+1}-1} a_i L_i D F.
	$$
	Hence the first inequality is proved. The second one is straightforward by applying Lemma \ref{horizontal1} to each $L_iDF$.
\end{proof}

\begin{lemma} \label{lambdaq}
	For any $1 \le q \le l$, we have
	$$
	(L^2_{\lambda_q-1}+L_{\lambda_q-1}L_{\lambda_q}+L_{\lambda_q}^2)F \ge 3L^2_{\lambda_q}F + 2 \delta_q(a_{\lambda_q, 0} - 1).
	$$
\end{lemma}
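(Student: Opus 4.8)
The plan is to restrict the entire inequality to a general fiber $F$ of $\pi_L$ and reduce it to a computation on a surface. Since $F$ is a general fiber of the fibration $\pi_L\colon X_L\to B$ over the curve $B$, its normal bundle is trivial, so $F|_F\equiv 0$; consequently each of the triple intersection numbers in the statement, all of which carry exactly one factor of $F$, equals the corresponding intersection number of the restricted classes on the surface $F$. Pulling these back along the resolution $\sigma_F\colon\tilde F\to F$ and writing $M_j:=L_{j,0}=\sigma_F^*(L_j|_F)$ (which is nef), the left-hand side becomes $M_{\lambda_q-1}^2+M_{\lambda_q-1}M_{\lambda_q}+M_{\lambda_q}^2$ and the quantity $3L_{\lambda_q}^2F$ becomes $3M_{\lambda_q}^2$, all computed on $\tilde F$.

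Next I would feed in the filtration. Restricting (\ref{li'}) with $i=\lambda_q-1$ to $F$ and using $F|_F\equiv 0$ together with $L'_j|_F=L_j|_F$ gives $L_{\lambda_q-1}|_F=L_{\lambda_q}|_F+Z_{\lambda_q}|_F$; setting $W:=\sigma_F^*(Z_{\lambda_q}|_F)\ge 0$ this becomes $M_{\lambda_q-1}=M_{\lambda_q}+W$ on $\tilde F$. Substituting and expanding, the left-hand side equals $3M_{\lambda_q}^2+3M_{\lambda_q}W+W^2$, so after cancelling $3M_{\lambda_q}^2$ the inequality is equivalent to $3M_{\lambda_q}W+W^2\ge 2\delta_q(a_{\lambda_q,0}-1)$. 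The useful rearrangement is $3M_{\lambda_q}W+W^2=2M_{\lambda_q}W+(M_{\lambda_q}+W)W=2M_{\lambda_q}W+M_{\lambda_q-1}W$, which reduces the problem to bounding $M_jW$ from below for $j\in\{\lambda_q-1,\lambda_q\}$.

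The final step is an application of Lemma \ref{horizontal1}. By the projection formula $M_jW=L_j\,Z_{\lambda_q}\,F$; writing $Z_{\lambda_q}=Z_{\lambda_q}^H+Z_{\lambda_q}^V$, the vertical summand contributes $L_j\,Z_{\lambda_q}^V\,F\ge 0$ (a product of the nef classes $L_j$, $F$ with an effective divisor), while $\deg(Z_{\lambda_q}^H|_C)=\deg(Z_{\lambda_q}|_C)=\delta_q$, so Lemma \ref{horizontal1} yields $M_jW\ge L_j\,Z_{\lambda_q}^H\,F\ge\delta_q(a_{j,0}-1)$. Taking $j=\lambda_q$ and $j=\lambda_q-1$ gives $2M_{\lambda_q}W+M_{\lambda_q-1}W\ge 2\delta_q(a_{\lambda_q,0}-1)+\delta_q(a_{\lambda_q-1,0}-1)$, and discarding the last term (nonnegative since $a_{\lambda_q-1,0}\ge 1$) finishes the proof.

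The argument is essentially bookkeeping, and I do not expect a genuine obstacle. The only points needing care are the reduction from a three-fold to a surface intersection number — in particular verifying $F|_F\equiv 0$ so that the $a_{\lambda_q}F$ term drops out upon restriction, and that $\sigma_F$ preserves both intersection numbers and the nefness of the $M_j$ — and the verification that the vertical contribution $L_j\,Z_{\lambda_q}^V\,F$ is genuinely nonnegative.
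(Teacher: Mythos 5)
Your proof is correct and is essentially the paper's argument: both expand $L_{\lambda_q-1}=L_{\lambda_q}+Z_{\lambda_q}+a_{\lambda_q}F$, kill the $F$-term against $F$, and apply Lemma \ref{horizontal1} to $Z^H_{\lambda_q}$ after discarding the nonnegative vertical contribution. The only cosmetic differences are that you carry out the bookkeeping on $\tilde F$ rather than on $X_L$ (the paper instead uses Proposition \ref{intersectionnumber} to get $L^2_{\lambda_q-1}F\ge L_{\lambda_q-1}L_{\lambda_q}F$, which is your $M_{\lambda_q-1}W\ge M_{\lambda_q}W$) and that you retain the extra term $\delta_q(a_{\lambda_q-1,0}-1)$ before throwing it away.
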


\begin{proof}
	By the decomposition in Theorem \ref{filtration}, we know that
	$$
	L_{\lambda_q-1}L_{\lambda_q}F =  \left(L_{\lambda_q} + Z_{\lambda_q}+a_{\lambda_q}F \right)L_{\lambda_q}F = L^2_{\lambda_q}F + L_{\lambda_q}Z_{\lambda_q}F.
	$$
	Together with Proposition \ref{intersectionnumber}, the above equality also gives 
	$$
	L^2_{\lambda_q-1}F \ge L_{\lambda_q-1}L_{\lambda_q}F = L^2_{\lambda_q}F + L_{\lambda_q}Z_{\lambda_q}F.
	$$
	On the other hand, by Lemma \ref{horizontal1},
	$$
	L_{\lambda_q} Z_{\lambda_q} F \ge L_{\lambda_q} Z^H_{\lambda_q} F \ge \delta_q(a_{\lambda_q, 0} - 1).
	$$
	The proof is completed by combining the three (in)equalities together.
\end{proof}

\begin{prop} \label{lambda}
	For any $1 \le q \le l$, we have 
	$$
	\sum_{i=\lambda_q}^{\lambda_{q+1}-1} a_i(L^2_{i-1} + L_{i-1}L_{i} + L^2_i)F + L'^2_{\lambda_q} Z_{\lambda_q}
	\ge \sum_{i=\lambda_q}^{\lambda_{q+1}-1} a_i \left( 3L^2_iF + 2\delta_q (a_{i, 0} - 1)\right).
	$$
\end{prop}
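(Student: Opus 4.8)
The plan is to split the index block $\{\lambda_q, \lambda_q+1, \ldots, \lambda_{q+1}-1\}$ into the single \emph{boundary} index $i=\lambda_q$, at which the fixed part $Z_{\lambda_q}$ carries a nonzero horizontal component, and the remaining \emph{interior} indices $\lambda_q+1 \le i \le \lambda_{q+1}-1$, at which $Z_i^H=0$ by the very definition of the $\lambda_q$. On the right-hand side I would match the two summands $3L_i^2F$ and $2\delta_q(a_{i,0}-1)$ to different sources on the left. For the boundary term I would simply multiply Lemma \ref{lambdaq} by $a_{\lambda_q}\ge 0$ to obtain
$$
a_{\lambda_q}(L^2_{\lambda_q-1}+L_{\lambda_q-1}L_{\lambda_q}+L_{\lambda_q}^2)F \ge a_{\lambda_q}\left(3L^2_{\lambda_q}F + 2\delta_q(a_{\lambda_q,0}-1)\right),
$$
which accounts in full for the $i=\lambda_q$ summand of the right-hand side.

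For the interior indices I would treat the two pieces of the right-hand side separately. For the $3L_i^2F$ pieces, note that $L_{i-1}-L_i = Z_i + a_{i-1}F$ is effective (by the decomposition in Theorem \ref{filtration}) and hence pseudo-effective, while $L_{i-1}$, $L_i$ and $F$ are all nef. Applying Proposition \ref{intersectionnumber} on $X_L$ twice then gives $L^2_{i-1}F \ge L_{i-1}L_iF \ge L_i^2F$, so that $(L^2_{i-1}+L_{i-1}L_i+L_i^2)F \ge 3L_i^2F$; multiplying by $a_i \ge 0$ and summing over $\lambda_q+1 \le i \le \lambda_{q+1}-1$ produces exactly $\sum 3a_iL_i^2F$. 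This is the routine monotonicity step, which if desired I would make explicit by expanding $L^2_{i-1}F - L_{i-1}L_iF = L_{i-1}Z_iF \ge 0$ using $F^2=0$.

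Finally — and this is the real content of the statement — I would show that the single term $L'^2_{\lambda_q}Z_{\lambda_q}$ on the left pays for \emph{all} the $2\delta_q(a_{i,0}-1)$ contributions of the interior range at once. Since $Z_{\lambda_q}=Z_{\lambda_q}^H+Z_{\lambda_q}^V$ with both parts effective and $L'_{\lambda_q}$ nef, we have $L'^2_{\lambda_q}Z_{\lambda_q} \ge L'^2_{\lambda_q}Z_{\lambda_q}^H$. If $\lambda_{q+1}-\lambda_q=1$ the interior range is empty and there is nothing left to prove, the term $L'^2_{\lambda_q}Z_{\lambda_q}\ge 0$ being simply discarded; otherwise $\lambda_{q+1}-\lambda_q>1$ and I would apply Lemma \ref{horizontal2} with $D=Z_{\lambda_q}^H$, using $\deg(Z_{\lambda_q}^H|_C)=\deg(Z_{\lambda_q}|_C)=\delta_q$ (the vertical part $Z_{\lambda_q}^V$ being $f_L$-vertical does not meet the general fiber $C$), to get $L'^2_{\lambda_q}Z_{\lambda_q}^H \ge 2\delta_q\sum_{i=\lambda_q+1}^{\lambda_{q+1}-1}a_i(a_{i,0}-1)$. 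Adding the three estimates yields the claim.

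I expect the only genuinely delicate point to be the bookkeeping in this last step. The definition of the $\lambda_q$ guarantees that no horizontal base locus appears strictly between two consecutive $\lambda_q$, and it is precisely this that allows Lemma \ref{horizontal2} to convert the one horizontal-base-locus term $L'^2_{\lambda_q}Z_{\lambda_q}$ into a bound covering the whole block of interior indices. Aligning the summation range of Lemma \ref{horizontal2} with the range here, and disposing of the degenerate case $\lambda_{q+1}=\lambda_q+1$ correctly, is where I would be most careful; the rest is a direct combination of Lemmas \ref{lambdaq} and \ref{horizontal2} with Proposition \ref{intersectionnumber}.
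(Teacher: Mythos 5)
Your proposal is correct and follows essentially the same route as the paper: the $i=\lambda_q$ summand is covered by Lemma \ref{lambdaq}, the interior $3L_i^2F$ terms by Proposition \ref{intersectionnumber}, the interior $2\delta_q(a_{i,0}-1)$ terms by applying Lemma \ref{horizontal2} to $D=Z^H_{\lambda_q}$ after discarding $L'^2_{\lambda_q}Z^V_{\lambda_q}\ge 0$, and the degenerate case $\lambda_{q+1}=\lambda_q+1$ by Lemma \ref{lambdaq} alone. The bookkeeping you flag (alignment of the summation ranges and the vanishing of $Z_i^H$ for interior $i$) is exactly how the paper's argument closes.
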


\begin{proof}
	If $\lambda_{q+1} - 1 = \lambda_q$, this result is implied by Lemma \ref{lambdaq} as $L'^2_{\lambda_q} Z_{\lambda_q} \ge 0$. Thus in the following, we assume that $\lambda_{q+1}-1 > \lambda_q$. By Proposition \ref{intersectionnumber}, we have 
	$$
	\sum_{i=\lambda_q+1}^{\lambda_{q+1}-1} a_i(L^2_{i-1} + L_{i-1}L_{i} + L^2_i)F \ge 3 \sum_{i=\lambda_q+1}^{\lambda_{q+1}-1} a_i L^2_iF.
	$$
	Also, Lemma \ref{horizontal2} implies that
	$$
	L'^2_{\lambda_q} Z_{\lambda_q} \ge L'^2_{\lambda_q} Z^H_{\lambda_q} \ge 2 \delta_q \sum_{i=\lambda_q+1}^{\lambda_{q+1}-1} a_i (a_{i, 0}-1).
	$$
	Combine the above two inequalities with Lemma \ref{lambdaq}, and the proof is completed.
\end{proof}

\subsection{Generalization of Proposition \ref{pl}} In this subsection, we generalize Proposition \ref{pl} to $2$-towers of curve fibrations.

\begin{prop} \label{pl2}
	Suppose that $P$ is a nef $\QQ$-divisor on $X$ such that $P \ge L$ and $P|_C \ge K_C$. Then
	$$
	PL^2 \ge (4g-4) \sum_{i=0}^{N} a_i \left(\sum_{j=0}^{N_i} a_{i, j} - 1 \right) - 2 PLF.
	$$
	Moreover, if $L|_C = K_C$, then
	$$
	PL^2 \ge L^3 \ge 3 \sum_{i=0}^{\lambda_1-1} a_i L^2_{i, 0} + (4g-4) \sum_{i=\lambda_1}^{N} a_i \left(\sum_{j=0}^{N_i} a_{i, j} - 1 \right) - 3PLF.
	$$
\end{prop}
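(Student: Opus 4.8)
The plan is to prove the two displayed inequalities separately, in each case reducing the intersection numbers on $X_L$ to fibrewise quantities on $\tilde{F}$ and then invoking Proposition \ref{pl} for the induced fibration $\tilde{f}\colon \tilde{F}\to f(F)$, whose general fibre $\tilde{C}$ has genus $g$. For the first inequality I would start from Proposition \ref{intersection32} with $P_0=\sigma_X^*P$ and discard the last sum $\sum_i P_0(L'_{i-1}+L'_i)Z_i\ge 0$. The filtration relation $L_{i-1}=L_i+Z_i+a_{i-1}F$ of Theorem \ref{filtration} together with $F^2=0$ (as $F$ is a fibre of $\pi_L$) gives $P_0L_{i-1}F=P_0L_iF+P_0Z_iF$, so $\sum_{i\ge 1}a_i(P_0L_{i-1}+P_0L_i)F\ge 2\sum_{i\ge 1}a_iP_0L_iF$ after dropping $P_0Z_iF\ge 0$. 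Collecting the coefficient of each $P_0L_iF$ and using $P_0L_0F=PLF$ yields
$$
PL^2 \ge 2\sum_{i=0}^{N}a_i\,P_0L_iF - 2PLF .
$$
Since $(P_0|_F)|_{\tilde{C}}=P|_C\ge K_C$, each term $P_0L_iF=(\sigma_F^*(P_0|_F))\cdot L_{i,0}$ is bounded below by $(2g-2)\bigl(\sum_{j=0}^{N_i}a_{i,j}-1\bigr)$ by Proposition \ref{pl} applied on $\tilde{F}$ (whose proof uses only $\deg(P|_{\tilde{C}})\ge 2g-2$). Substituting gives exactly the first inequality.

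For the second inequality I first note $PL^2\ge L^3$ by Proposition \ref{intersectionnumber}, since $P-L$ is effective and $P,L$ are nef. I then apply Proposition \ref{intersection31} with $i=N$, drop $L'^3_N\ge 0$, and from the divisorial sum retain only the horizontal contributions via $\sum_{j=1}^{N}(L'^2_{j-1}+L'_{j-1}L'_j+L'^2_j)Z_j\ge \sum_{q=1}^{l}L'^2_{\lambda_q}Z_{\lambda_q}$. This leaves
$$
L^3 \ge 3(a_0-1)L_0^2F + \sum_{j=1}^{N}a_j(L_{j-1}^2+L_{j-1}L_j+L_j^2)F + \sum_{q=1}^{l}L'^2_{\lambda_q}Z_{\lambda_q},
$$
and I would split the range $\{1,\dots,N\}$ according to $\lambda_0=0<\lambda_1<\dots<\lambda_l<\lambda_{l+1}=N+1$.

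On the initial block $0\le i\le \lambda_1-1$ the base loci $Z_i$ are vertical, so $L_i|_C=K_C$, and since $L_{j-1}\ge L_j$ are nef, Proposition \ref{intersectionnumber} gives $(L_{j-1}^2+L_{j-1}L_j+L_j^2)F\ge 3L_j^2F=3L_{j,0}^2$; combining this with the leading term $3(a_0-1)L_0^2F$ and with $L_0^2F=L^2F\le PLF$ produces $3\sum_{i=0}^{\lambda_1-1}a_iL_{i,0}^2-3PLF$, matching the first group of terms in the statement. On each later block $\lambda_q\le i\le \lambda_{q+1}-1$ with $q\ge 1$, I would apply Proposition \ref{lambda} to bound the corresponding summands together with $L'^2_{\lambda_q}Z_{\lambda_q}$ from below by $\sum_i a_i\bigl(3L_{i,0}^2+2\delta_q(a_{i,0}-1)\bigr)$, and then convert this into $(4g-4)\sum_i a_i\bigl(\sum_{j=0}^{N_i} a_{i,j}-1\bigr)$.

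The main obstacle is precisely this last conversion. For $\lambda_q\le i\le \lambda_{q+1}-1$ the horizontal base loci force a degree drop $\deg(L_i|_C)=2g-2-\sum_{p\le q}\delta_p$, so $L_{i,0}$ is no longer as positive as $K_{\tilde{C}}$ on the fibre and Proposition \ref{pl} cannot be applied to $L_{i,0}$ itself; the extra term $2\delta_q(a_{i,0}-1)$ produced by the horizontal base locus is exactly what must compensate for this deficiency. Carrying this out requires the fibrewise numerical estimates of Proposition \ref{numerical2} and Clifford's inequality on $\tilde{C}$, combined with careful bookkeeping of the degrees $\delta_q$ across all blocks so that the accumulated deficiencies telescope correctly into the uniform factor $4g-4$. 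This degree accounting, rather than any single intersection-theoretic estimate, is where I expect the real difficulty to lie.
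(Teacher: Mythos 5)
Your argument for the first inequality is correct and is essentially identical to the paper's: Proposition \ref{intersection32}, drop the $Z_i$-terms, use $P_0L_{i-1}F\ge P_0L_iF$ to collect $2\sum_{i=0}^{N}a_iP_0L_iF-2PLF$, and then bound each $P_0L_iF=\tilde{P}\cdot L_{i,0}\ge(2g-2)\bigl(\sum_{j=0}^{N_i}a_{i,j}-1\bigr)$ via the filtration on $\tilde F$. No issues there.

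For the second inequality, however, your proposal has a genuine gap, and you have correctly located it yourself: the step converting $\sum_i a_i\bigl(3L_{i,0}^2+2\delta_q(a_{i,0}-1)\bigr)$ into $(4g-4)\sum_i a_i\bigl(\sum_{j}a_{i,j}-1\bigr)$ on the blocks $q\ge 1$ is not carried out, and it cannot be carried out blockwise. Once horizontal base loci have been removed, $\deg(L_i|_C)$ has dropped by the \emph{accumulated} amount $\sum_{p\le q}\delta_p$, whereas the compensating term coming from Proposition \ref{lambda} only sees the single increment $\delta_q$ and only multiplies $a_{i,0}-1$ rather than the full sum $\sum_j a_{i,j}-1$; for blocks beyond the first, and whenever $N_i\ge 1$ with $a_{i,j}$ large for $j\ge 1$, the deficit is not recovered. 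The paper sidesteps this entirely: it never uses Proposition \ref{lambda} or the $\delta_q$'s in Proposition \ref{pl2}. Instead it stops the telescoping of $L^3$ at index $\lambda_1-1$ via Proposition \ref{intersection31}, which yields $L^3-L'^3_{\lambda_1-1}\ge 3\sum_{i=0}^{\lambda_1-1}a_iL^2_{i,0}-3L^2F$, and then bounds $L'^3_{\lambda_1-1}$ from below by rerunning your first-inequality argument with $L'_{\lambda_1-1}$ in the role of $P$. The point is that $L'_{\lambda_1-1}|_C=K_C$ still has full degree $2g-2$ (no horizontal component has been subtracted up to that stage) and $\nti_{f_L}(L'_{\lambda_1-1})=1$, so each $L'_{\lambda_1-1}L_iF=\tilde{L}_{\lambda_1-1}\cdot L_{i,0}\ge\bigl(\sum_{j}a_{i,j}-1\bigr)\tilde{L}_{\lambda_1-1}\tilde{C}=(2g-2)\bigl(\sum_{j}a_{i,j}-1\bigr)$ for all $i\ge\lambda_1$, \emph{regardless} of the degree drop of $L_i|_C$. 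This gives $(4g-4)\sum_{i=\lambda_1}^{N}a_i\bigl(\sum_j a_{i,j}-1\bigr)$ in one stroke, and no bookkeeping of the $\delta_q$'s is needed. You should replace your block decomposition of the tail by this single application of the first inequality to $L'_{\lambda_1-1}$.
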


\begin{proof}
	Write $P_0 = \sigma^*_X P$, and $\tilde{P} = \sigma^*_F (P_0|_F)$. By Proposition \ref{intersection32} and Proposition \ref{intersectionnumber}, we have
	\begin{eqnarray*}
		PL^2 & \ge & 2(a_0-1)P_0L_0F + 2 \sum_{i=1}^N a_iP_0(L_{i-1}+L_i)F \\
		& \ge & 2 \sum_{i=0}^N a_i \tilde{P} L_{i, 0} - 2 PLF.
	\end{eqnarray*}
	For any $0 \le i \le N$, by the decomposition in Theorem \ref{filtration2}, we have
	$$
	L_{i, 0} \ge \left(\sum_{j=0}^{N_i} a_{i, j} - 1\right) \tilde{C} + L'_{i, N_i}
	$$
	on $\tilde{F}$, from which we obtain
	$$
	\tilde{P} L_{i, 0} \ge \left(\sum_{j=0}^{N_i} a_{i, j} - 1\right) \tilde{P} \tilde{C} \ge (2g-2) \left(\sum_{j=0}^{N_i} a_{i, j} - 1\right).
	$$
	Combine the above results together, and the proof of the first inequality is completed.
	
	Now let us prove the second inequality. Suppose that $L|_C = K_C$. Since $Z_i$ is vertical with respect to $f_L$ for any $0 \le i \le \lambda_1-1$, we know that
	$$
	L'_{\lambda_1 - 1}|_C = K_C.
	$$
	Write $\tilde{L}_{\lambda_1-1} = \sigma^*_F (L'_{\lambda_1 - 1}|_F)$. Notice that $\nti_{f_L} (L'_{\lambda_1 - 1}) = 1$. Apply the proof of the first inequality verbatim, and it follows that
	\begin{eqnarray*}
		L'^3_{\lambda_1-1} & \ge & 2 (\nti_{f_L}(L'_{\lambda_1 - 1}) - 1) L'^2_{\lambda_1-1} F + 2 \sum_{i=\lambda_1}^{N} a_i L'_{\lambda_1-1}(L_{i-1}+L_i) F \\
		& \ge & 2 \sum_{i=\lambda_1}^{N} a_i \left(\sum_{j=0}^{N_i} a_{i, j} - 1 \right) \tilde{L}_{\lambda_1-1} \tilde{C} \\
		& \ge & (4g-4)\sum_{i=\lambda_1}^{N} a_i \left(\sum_{j=0}^{N_i} a_{i, j} - 1 \right).
	\end{eqnarray*}
	On the other hand, by Proposition \ref{intersection31}, we have
	$$
	L^3 - L'^3_{\lambda_1 - 1} \ge 3 \sum_{i=0}^{\lambda_1-1} a_i L^2_{i, 0} - 3L^2F.
	$$
	Combine the above two inequalities together, and the result follows as $L^2F \le PLF$ and $L^3 \le PL^2$.
\end{proof}

\subsection{Main result}
Our main result in this section is the following.

\begin{theorem} \label{3fold}
	Suppose that $|L|$ is base point free and $0 < L|_C \le K_C$. Let $P$ be a nef $\QQ$-divisor on $X$ such that $P \ge L$, $P|_C \ge K_C$ and $P^2F > 0$. Then
	$$
	h^0(L) \le  \left( \frac{1}{12} + \frac{1}{6g-6} \right) PL^2 + \left(\frac{P^3}{P^2F} + 1 \right) g + P^2F.
	$$
\end{theorem}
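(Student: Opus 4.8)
The plan is to bound $h^0(L)$ fibrewise with respect to $\pi_L$ and then convert the resulting sum of fibrewise quantities into the intersection numbers $PL^2$, $P^3$ and $P^2F$ using the numerical inequalities of the previous sections. The starting point is Proposition~\ref{h0}, giving $h^0(L)\le\sum_{i=0}^N a_i\,h^0(L_i|_F)$. For each $i$ the movable restriction pulls back to $L_{i,0}=\sigma_F^*(L_i|_F)$ on the fibred surface $\tilde f:\tilde F\to f(F)$ with general fibre $\tilde C$ of genus $g$; since $|L_{i,0}|$ is base point free (Remark~\ref{surfaceri}) and $\deg(L_{i,0}|_{\tilde C})=L_iC\le \deg(L|_C)\le 2g-2$, Clifford's inequality is available on $\tilde C$ and I may invoke Theorem~\ref{surface}(3) (falling back on Theorem~\ref{surface}(1) when $L_iC=0$). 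This produces, for every $i$,
\[
h^0(L_i|_F)\le \tfrac14 L_i^2F+\tfrac12 L_iC+a_{i,0}+\tfrac12\sum_{j=1}^{N_i}a_{i,j}.
\]
Multiplying by $a_i$ and summing breaks $h^0(L)$ into the quadratic sum $A=\tfrac14\sum_i a_iL_i^2F$, the degree sum $B=\tfrac12\sum_i a_iL_iC$, the first-threshold sum $E=\sum_i a_ia_{i,0}$, and the tail sum $G=\tfrac12\sum_i a_i\sum_{j\ge1}a_{i,j}$, each of which I estimate separately.

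The quadratic sum is controlled by Proposition~\ref{intersection31}: discarding the nonnegative $Z$-contributions and $L_N'^3$, and applying Proposition~\ref{intersectionnumber} to $(L_{j-1}^2+L_{j-1}L_j+L_j^2)F\ge 3L_j^2F$, gives $\sum_i a_iL_i^2F\le\tfrac13 L^3+L^2F$, so that $A\le\tfrac1{12}L^3+\tfrac14 L^2F\le\tfrac1{12}PL^2+\tfrac14 P^2F$. For the threshold sums I first record the cancellation $E+G=\tfrac12\sum_i a_ia_{i,0}+\tfrac12\sum_i a_i\sum_{j\ge0}a_{i,j}$, coming from $\sum_{j\ge1}a_{i,j}=\sum_{j\ge0}a_{i,j}-a_{i,0}$; the second piece is exactly the double-threshold sum of Proposition~\ref{pl2}, whose bound $\sum_i a_i(\sum_j a_{i,j}-1)\le (PL^2+2PLF)/(4g-4)$ yields $\tfrac12\sum_i a_i\sum_{j\ge0}a_{i,j}\le\tfrac1{8g-8}PL^2$ plus lower order terms. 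The degree sum $B$ is bounded crudely through $L_iC\le 2g-2$ and Proposition~\ref{sumai} ($\sum_i a_i\le P^3/P^2F+1$), and this is precisely where the term $\big(P^3/P^2F+1\big)g$ in the statement originates.

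The crux is therefore the first piece $\tfrac12 E=\tfrac12\sum_i a_ia_{i,0}$, which must be shown to be at most $\tfrac1{24(g-1)}PL^2$ up to lower order, so that adding it to $\tfrac1{8g-8}PL^2$ produces exactly the sharp coefficient $\tfrac1{6g-6}=\tfrac1{24(g-1)}+\tfrac1{8(g-1)}$. The natural bound is the fibrewise Noether inequality $L_{i,0}^2\ge 2(a_{i,0}-1)\deg(L_{i,0}|_{\tilde C})$ of Proposition~\ref{numerical2}(2) on $\tilde F$; in the extremal regime $\deg(L_{i,0}|_{\tilde C})=2g-2$ it gives $a_{i,0}-1\le L_i^2F/(4g-4)$ and hence, via the same estimate $\sum_i a_iL_i^2F\le\tfrac13 L^3+L^2F$ as above, exactly $\sum_i a_ia_{i,0}\le\tfrac1{12(g-1)}PL^2+\text{l.o.t.}$ The difficulty is that this bound degrades as the fibre degree $L_iC$ drops, while the alternative bound through Proposition~\ref{pl} overshoots because it forgets that $a_{i,0}$ is only the first of the fibre thresholds. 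The resolution I would pursue is to organize the sum along the intervals $[\lambda_q,\lambda_{q+1}-1]$ cut out by the horizontal base loci, on each of which $L_iC$ is constant: the degree drop $\delta_q=\deg(Z_{\lambda_q}|_C)$ from one interval to the next is transferred into the self-intersection budget exactly by Lemmas~\ref{horizontal1} and~\ref{horizontal2} and Proposition~\ref{lambda}, so that the indices where the degree has already been removed are paid for by this horizontal contribution to $L^3$ rather than by the fibrewise Noether inequality. Balancing these two mechanisms—the fibrewise Noether bound where $L_iC$ is large against the horizontal transfer of Proposition~\ref{lambda} where it is small—so that the total is still at most $\tfrac1{24(g-1)}PL^2$ is the delicate point, and I expect it to be the main obstacle of the proof. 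Once it is settled, all remaining error terms are of the form $\sum_i a_i$, $PLF$, $L^2F$ or $L^3$, and are absorbed into $\big(P^3/P^2F+1\big)g+P^2F$ using Proposition~\ref{sumai}, the inequality $L^3\le PL^2$, and the nefness comparison of Proposition~\ref{intersectionnumber}.
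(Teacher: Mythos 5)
Your proposal has the right architecture (Proposition~\ref{h0}, the fibrewise Clifford-type bounds of Theorem~\ref{surface}, the numerical inequalities of Propositions~\ref{intersection31}, \ref{pl2}, \ref{sumai}, and the bookkeeping along the intervals $[\lambda_q,\lambda_{q+1}-1]$), and your arithmetic for the pieces $A$, $B$ and $G$ is sound. But the step you yourself flag as ``the main obstacle'' --- bounding $\tfrac12\sum_i a_i a_{i,0}$ by $\tfrac{1}{24(g-1)}PL^2$ plus lower order terms --- is exactly the heart of the theorem, and it is left unresolved. As you observe, the fibrewise Noether inequality only delivers $a_{i,0}-1\le L_i^2F/(4g-4)$ when $\deg(L_i|_C)=2g-2$, and the horizontal-transfer lemmas only pay for indices $i$ where some degree has already been dropped; you have not shown that the two mechanisms together cover every index with the required coefficient, so the proof is incomplete.

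The paper resolves this differently, and the missing ingredient is worth naming: it never splits $\tfrac{1}{6g-6}$ as $\tfrac{1}{24(g-1)}+\tfrac{1}{8(g-1)}$. Instead, Proposition~\ref{comparisonl<2g-2} proves the single fibrewise bound
$$
h^0(L_{i,0})\le \tfrac14 L_{i,0}^2+\tfrac16\,\delta_q(a_{i,0}-1)+\tfrac23\Bigl(\sum_{j=0}^{N_i}a_{i,j}-1\Bigr)+g,
$$
in which the whole of $a_{i,0}$ has been absorbed. The absorption works because $\delta_q\ge 2$ always holds --- by parity of $\deg(L_i|_C)$ in the hyperelliptic case, by the sharper Clifford inequality of Theorem~\ref{surface}(4) (which puts coefficient $\tfrac12$ rather than $1$ on $a_{i,0}$) in the non-hyperelliptic case, and by $\delta_q\ge 3$ when $\deg(L_i|_C)=0$. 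The term $\tfrac16\delta_q(a_{i,0}-1)$ is then paid in full by the horizontal transfer (Proposition~\ref{lambda} together with $L^2D$ for $D$ the horizontal part of $P-L$), while the term $\tfrac23(\sum_j a_{i,j}-1)$ is paid by Proposition~\ref{pl2} with coefficient $\tfrac23\cdot\tfrac{1}{4g-4}=\tfrac{1}{6g-6}$, which is where the sharp constant comes from. Your plan also omits the necessary case distinction: when $L|_C=K_C$ the horizontal part of $P-L$ may be empty, so the initial segment $0\le i\le\lambda_1-1$ cannot be paid for by $L^2D$; the paper instead invokes \cite[Theorem 1.2]{YuanZhang_RelNoether} there and the second inequality of Proposition~\ref{pl2}. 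Without Proposition~\ref{comparisonl<2g-2} (or an equivalent balancing argument) and without the case $L|_C=K_C$, the proposal does not yet constitute a proof.
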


\begin{proof}
	By Proposition \ref{h0}, we know that
	\begin{equation}
	h^0(L) \le \sum_{i=0}^{N} a_i h^0(L_i|_F) \le \sum_{i=0}^{N} a_i h^0(L_{i, 0}). \label{3foldh0}
	\end{equation}
	In the following, we divide our proof into two cases.
	
	\textbf{Case 1: $L|_C < K_C$}. Apply Proposition \ref{intersection31}, and we obtain
	\begin{eqnarray}
		L^3 & \ge & 3 \sum_{i=0}^{\lambda_1-1} a_i L^2_{i, 0} - 3L^2F + \label{3foldl3} \\
		& & \sum_{q=1}^{l+1} \left(\sum_{i=\lambda_q}^{\lambda_{q+1}-1} a_i(L^2_{i-1} + L_{i-1}L_{i} + L^2_i)F + L'^2_{\lambda_q} Z_{\lambda_q}\right). \nonumber 
	\end{eqnarray}
	By Proposition \ref{lambda}, for any $1 \le q \le l$, we have 
	\begin{eqnarray}
	& & \sum_{i=\lambda_q}^{\lambda_{q+1}-1} a_i(L^2_{i-1} + L_{i-1}L_{i} + L^2_i)F + L'^2_{\lambda_q} Z_{\lambda_q} \label{3foldlambdaq} \\ 
	& \ge & \sum_{i=\lambda_q}^{\lambda_{q+1}-1} 3 a_i \left(L^2_{i, 0} + \frac{2}{3} \delta_q (a_{i, 0} - 1) \right) \nonumber
	\end{eqnarray}
	 In the following, we will focus on the difference between $PL^2$ and $L^3$.
	
	Since $P|_C \ge K_C > L|_C$, we know that $(P-L)|_C > 0$. In particular, the horizontal part of $P-L$ with respect to $f$ is non-empty, which we denote by $D$. Write
	$$
	\delta_0 = \deg(D|_C).
	$$ 
	Let $D_0$ be the proper transform of $D$ by $\sigma_X$. Then $D_0$ is still horizontal with respect to $f_L$ and $\deg(D_0|_C) = \delta_0$. By Lemma \ref{horizontal2}, if $\lambda_1 > 1$, then 
	\begin{equation}
	L'^2_0 D_0 \ge 2 \delta_0\sum_{i=1}^{\lambda_1-1} a_i(a_{i, 0} -1). \label{l'2d}
	\end{equation}
	Furthermore, by Lemma \ref{horizontal1}, we know that
	\begin{eqnarray}
	(L^2_0 - L'^2_0) D_0 & = & (a_0 - 1) (L_0 + L'_0)D_0F \label{l2d-l'2d} \\
	& = & 2a_0L_0D_0F - 2L_0D_0F \nonumber\\
	& \ge & 2 \delta_0a_0(a_{0, 0}-1) - 2(P-L)LF. \nonumber
	\end{eqnarray}
	Combine (\ref{l'2d}) and (\ref{l2d-l'2d}), and we obtain
	\begin{equation}
		L^2 D = L^2_0 D_0 \ge 2 \delta_0 \sum_{i=0}^{\lambda_1-1} a_i(a_{i, 0} -1) - 2PLF + 2L^2F \label{3foldl2d}
	\end{equation}
	when $\lambda_1 > 1$. On the other hand, if $\lambda_1 = 1$, it can be easily checked that (\ref{3foldl2d}) follows from (\ref{l2d-l'2d}) directly as $L'^2_0 D_0 \ge 0$.
	
	Now by (\ref{3foldl3}), (\ref{3foldlambdaq}) and (\ref{3foldl2d}), it follows that
	\begin{eqnarray}
		PL^2 & \ge & L^3 + L^2 D \label{3foldpl2} \\
		& \ge & \sum_{q=0}^{l} \sum_{i=\lambda_q}^{\lambda_{q+1} - 1} 3a_i \left(L^2_{i, 0} + \frac{2}{3} \delta_q (a_{i, 0} - 1) \right) - 3PLF. \nonumber
	\end{eqnarray}
	Combine (\ref{3foldh0}) and (\ref{3foldpl2}), and apply  Proposition \ref{sumai} and Proposition \ref{comparisonl<2g-2} (which we will prove later) together with the first inequality in Proposition \ref{pl2}. It follows that
	\begin{eqnarray*}
		h^0(L) - \frac{1}{12} PL^2 & \le & \sum_{q=0}^{l} \sum_{i= \lambda_q}^{\lambda_{q+1} - 1} a_i \left(h^0(L_{i, 0}) - \frac{1}{4}L^2_{i, 0} - \frac{1}{6} \delta_q(a_{i, 0}-1)\right) + \frac{1}{4} PLF \\
		& \le & \frac{2}{3} \sum_{i=0}^{N} a_i \left(\sum_{j=0}^{N} a_{i, j} - 1 \right) + g \sum_{i=0}^{N} a_i +  \frac{1}{4} PLF \\
		& \le & \frac{1}{6g-6} (PL^2 + 2PLF) +  g \sum_{i=0}^{N} a_i + \frac{1}{4} PLF \\
		& \le & \frac{1}{6g-6} PL^2 + \left(\frac{P^3}{P^2F} + 1 \right) g + PLF.
	\end{eqnarray*}
	Since $PLF \le P^2F$, we finish the proof in this case.
	
	\textbf{Case 2: $L|_C = K_C$}. It is easy to check that (\ref{3foldl3}) and (\ref{3foldlambdaq}) still hold in this case. Together with the fact that $PL^2 \ge L^3$ and $PLF \ge L^2F$, we obtain
	\begin{equation}
		PL^2 \ge 3 \sum_{i=0}^{\lambda_1 - 1} a_i L^2_{i, 0} + \sum_{q=1}^{l} \sum_{i=\lambda_q}^{\lambda_{q+1} - 1} 3a_i \left(L^2_{i, 0} + \frac{2}{3} \delta_q (a_{i, 0} - 1) \right) - 3PLF. \label{case2l3}
	\end{equation}
	By Proposition \ref{comparisonl<2g-2}, for any $1 \le q \le l$ and $\lambda_q \le i \le \lambda_{q+1}-1$, 
	\begin{equation}
		h^0(L_{i, 0}) \le \frac{1}{4} L^2_{i, 0} + \frac{1}{6} \delta_{q}(a_{i, 0}-1) + \frac{2}{3} \left(\sum_{j=0}^{N_i} a_{i, j} - 1 \right) + g. \label{case2<2g-2}
	\end{equation}
	Furthermore, notice that $L_i = K|_C$ for $0 \le i \le \lambda_1-1$. For any such $i$, by \cite[Theorem 1.2]{YuanZhang_RelNoether}, we have
	\begin{equation}
		h^0(L_{i, 0}) \le \frac{1}{4}L^2_{i, 0} + \frac{1}{4g-4} L^2_{i, 0} + g. \label{case2=2g-2}
	\end{equation}
	Combine (\ref{case2l3})--(\ref{case2=2g-2}) and the second inequality in Proposition \ref{pl2}. Similar to Case 1, it follows that
	\begin{eqnarray*}
		h^0(L) - \frac{1}{12} PL^2 & \le &  \sum_{i=0}^{\lambda_1-1} a_i \left(h^0(L_{i, 0}) - \frac{1}{4} L^2_{i, 0}\right) + \frac{1}{4} PLF + \\
		& & \sum_{q=1}^{l} \sum_{i= \lambda_q}^{\lambda_{q+1} - 1} a_i \left(h^0(L_{i, 0}) - \frac{1}{4}L^2_{i, 0} - \frac{1}{6} \delta_q(a_{i, 0}-1)\right) \\ 
		& \le &  \sum_{i=0}^{\lambda_1-1} \frac{1}{4g-4} a_i L^2_{i, 0} + \frac{2}{3} \sum_{i=\lambda_1}^{N} a_i \left(\sum_{j=0}^{N} a_{i, j} - 1 \right) + g \sum_{i=0}^{N} a_i + \frac{1}{4} PLF \\
		& \le & \frac{1}{6g-6} (PL^2+3PLF) + g \sum_{i=0}^{N} a_i + \frac{1}{4} PLF \\
		& \le & \frac{1}{6g-6} PL^2 + \left(\frac{P^3}{P^2F} + 1 \right) g + PLF. 
	\end{eqnarray*}
	Again, we have $PLF \le P^2F$. Thus the whole proof is completed once we verify the following proposition.
\end{proof}

\begin{prop} \label{comparisonl<2g-2}
	For any $1 \le q \le l$ and any $\lambda_q \le i \le \lambda_{q+1} - 1$, we have
	$$
	h^0(L_{i, 0}) \le \frac{1}{4} L^2_{i, 0} + \frac{1}{6} \delta_{q}(a_{i, 0}-1) + \frac{2}{3} \left(\sum_{j=0}^{N_i} a_{i, j} - 1 \right) + g.
	$$
	Moreover, if we are in Case 1 of the proof of Theorem \ref{3fold}, then the above inequality still holds for any $0 \le i \le \lambda_1 - 1$ if replacing $\delta_q$ by $\delta_0$ therein.
\end{prop}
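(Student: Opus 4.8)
The plan is to read the inequality as a refined Clifford (slope) bound for the single fibred surface $\tilde{f}\colon\tilde{F}\to f(F)$, whose general fibre $\tilde{C}$ has genus $g$, carrying the nef divisor $L_{i,0}$. The triples $\{(L_{i,j},Z_{i,j},a_{i,j})\}_{j=0}^{N_i}$ are precisely the output of Theorem \ref{filtration2} applied to $L_{i,0}$ on $\tilde{F}$, so Proposition \ref{numerical2} and the computation behind Theorem \ref{surface} are available verbatim. Writing $d_{i,j}=L_{i,j}\tilde{C}$ and $r_{i,j}=h^0(L_{i,j}|_{\tilde{C}})$, I would feed Clifford's inequality $r_{i,j}\le\tfrac12 d_{i,j}+1$ together with $r_{i,0}>r_{i,1}>\cdots>r_{i,N_i}$ into Proposition \ref{numerical2}, exactly as in the proof of Theorem \ref{surface}(3), to obtain the crude estimate
\[
h^0(L_{i,0})-\tfrac14 L_{i,0}^2\le\tfrac12 d_{i,0}+a_{i,0}+\tfrac12\sum_{j=1}^{N_i}a_{i,j}.
\]
Everything then comes down to trading the full coefficient of $a_{i,0}$ for the target coefficient $\tfrac23$, at the cost of the terms $\tfrac16\delta_q(a_{i,0}-1)$ and $g$.

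The room for this trade is created by a sharp bound on the fibre degree $d_{i,0}=\deg(L_i|_C)$. Tracking the main filtration along $B$, the class $L_i|_C$ is constant inside each block, while at each index $\lambda_{q'}$ the fibre degree drops by exactly $\deg(Z^H_{\lambda_{q'}}|_C)=\delta_{q'}$; hence for $\lambda_q\le i\le\lambda_{q+1}-1$,
\[
d_{i,0}=\deg(L|_C)-\sum_{q'=1}^{q}\delta_{q'}\le(2g-2)-\delta_q,
\]
using $L|_C\le K_C$. Substituting this yields the slack $g-\tfrac12 d_{i,0}\ge 1+\tfrac{\delta_q}{2}$, and a short rearrangement reduces the desired inequality to
\[
(2-\delta_q)\,a_{i,0}\le 2+2\delta_q+\sum_{j=1}^{N_i}a_{i,j},
\]
which is automatic as soon as $\delta_q\ge 2$. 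The identical computation, with $\delta_0$ in place of $\delta_q$, settles the extra range $0\le i\le\lambda_1-1$ claimed in Case 1 of Theorem \ref{3fold}.

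This leaves the case $\delta_q=1$, which I would split according to the fibre $\tilde{C}$; note that the degree bound forces $L_{i,0}|_{\tilde{C}}<K_{\tilde{C}}$ throughout the relevant range. If $\tilde{C}$ is non-hyperelliptic, the sharper Clifford inequality (\ref{clifford1}) underlying Theorem \ref{surface}(4) replaces the leading $a_{i,0}$ by $\tfrac12 a_{i,0}+\tfrac14 a_{i,N_i}$, and the estimate closes at once; the same sharpening of the leading term is available whenever $d_{i,0}$ is odd (integral Clifford) or $L_{i,0}|_{\tilde{C}}$ is not composed with the hyperelliptic pencil (strict Clifford), and each of these again closes the estimate using only $g-\tfrac12 d_{i,0}\ge\tfrac32$. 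The genuinely difficult configuration, which I expect to be the main obstacle, is a hyperelliptic $\tilde{C}$ with $L_{i,0}|_{\tilde{C}}\sim m\,g^{1}_{2}$ an exact multiple of the pencil: there Clifford is an equality, $r_{i,0}=\tfrac12 d_{i,0}+1$, and the surface estimate above is tight and no longer sufficient once $a_{i,0}$ is large. For this case the plan is to leave the surface $\tilde{F}$ in isolation and exploit the global $2$-tower: the nefness of $L$ on $X$, together with the decomposition (\ref{lij'}) and the numerical inequalities of Sections 4--5, should pin the vertical threshold $a_{i,0}$ against the horizontal defect $g-m$ recorded by the degree bound, thereby recovering the missing fraction of $a_{i,0}$; an alternative is to pass to the relative double cover determined by the pencil and recompute $h^0(L_{i,0})$ there. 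Establishing the needed control of $a_{i,0}$ in this extremal hyperelliptic case is the step I expect to require the tower structure in an essential way.
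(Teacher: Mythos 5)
Your reduction is sound as far as it goes, and it matches the paper's strategy: feed Clifford and Proposition \ref{numerical2} into the fibred surface $\tilde F$, then use a lower bound on $\delta_q$ to absorb the leading coefficient of $a_{i,0}$. Your degree bookkeeping $d_{i,0}=\deg(L|_C)-\sum_{q'\le q}\delta_{q'}\le (2g-2)-\delta_q$ and the resulting rearrangement to $(2-\delta_q)a_{i,0}\le 2+2\delta_q+\sum_{j\ge 1}a_{i,j}$ are correct, so $\delta_q\ge 2$ does close the argument, and your treatment of the non-hyperelliptic case via Theorem \ref{surface}(4) is also what the paper does (modulo the separate subcase $\deg(L_i|_C)=0$, where one must fall back on Theorem \ref{surface}(1) rather than the Clifford-based estimate, since the hypothesis $0\ne L|_F$ of Theorem \ref{surface}(3) fails there).

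The genuine gap is your ``extremal hyperelliptic configuration,'' which you leave open and propose to attack with the global $2$-tower. That case is vacuous, and the observation that kills it is the entire point of the hyperelliptic half of the paper's proof: each $|L_i|_C|$ is base point free (the $L_i$ are the movable parts of the filtration) and special (it sits under $K_C$), and a base-point-free special linear system on a hyperelliptic curve is a multiple of the $g^1_2$, hence of even degree. Therefore $\delta_q=\deg(L_{\lambda_q-1}|_C)-\deg(L_{\lambda_q}|_C)$ is a difference of two even numbers, so $\delta_q\ge 2$ automatically whenever $C$ is hyperelliptic (and likewise $\delta_0=\deg(P|_C)-\deg(L|_C)\ge(2g-2)-(2g-4)=2$ in Case 1). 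So the case $\delta_q=1$ simply never coexists with a hyperelliptic $C$; no tower-theoretic control of $a_{i,0}$ is needed, and no double-cover computation either. You in fact circle this parity fact when you note that the hard case is $L_{i,0}|_{\tilde C}\sim m\,g^1_2$, but you apply it only to the single divisor $L_{i,0}$ rather than to the two consecutive restrictions whose difference defines $\delta_q$. As written, your proof is incomplete at exactly this point; with the parity remark inserted it becomes essentially the paper's proof.
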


\begin{proof}
	Let us first assume that $C$ is hyperelliptic. 
	By Theorem \ref{surface} (1) and (3), we know that for any $0 \le i \le N$,
	\begin{eqnarray*}
		h^0(L_{i, 0})  & \le & \frac{1}{4} L^2_{i, 0} + a_{i, 0} + \frac{1}{2} \sum_{j=1}^{N_i} a_{i, j} + (g-1) \\
		& \le & \frac{1}{4} L^2_{i, 0} + \frac{1}{3} (a_{i, 0} - 1) + \frac{2}{3} \left( \sum_{j=0}^{N_i} a_{i, j} - 1\right) + g.
	\end{eqnarray*}
	On the other hand, since $|L_i|_C|$ is base point free,  $\deg(L_i|_C) < 2g-2$ must be even for any $i \ge \lambda_1$. In particular, it implies that
	$$
	\delta_q \ge 2
	$$
	for any $q \ge 1$. Moreover, if we are in Case 1 of the proof of Theorem \ref{3fold}, then $\deg(L|_C) < 2g-2$ is also even, and we still have
	$$
	\delta_0 = \deg(P|_C) - \deg(L|_C) \ge (2g-2) - (2g-4) \ge 2.
	$$
	Thus the proof is completed in the hyperelliptic case.
	
	Now assume that $C$ is non-hyperelliptic. Recall that $L_i|_C < K_C$ for any $i \ge \lambda_1$. If $\deg(L_i|_C) > 0$, then by Theorem \ref{surface} (4),
	$$
	h^0(L_{i, 0}) \le  \frac{1}{4} L^2_{i, 0} + \left(\frac{1}{2} a_{i, 0} + \frac{1}{4} a_{i, N_i} - 1 \right) + g,
	$$
	which is stronger than the desired inequality. Moreover, if we are in Case 1 of the proof of Theorem \ref{3fold}, then for any $0 \le i \le \lambda_1-1$, we have $0 < L_i|_C < K_C$. Thus the above inequality also holds for $0 \le i \le \lambda_1 - 1$, so does the proposition.
	
	If $\deg(L_i|_C) = 0$, then we deduce that
	$$
	2g-2 \ge \delta_q \ge 3.
	$$
	Otherwise $|L_{\lambda_q-1}|_C|$ is base point free and $\deg(L_{\lambda_q-1}|_C) \le 2$, which would imply that $C$ is hyperelliptic. Therefore, by Theorem \ref{surface} (1), we obtain
	$$
	h^0(L_{i, 0}) \le a_{i, 0} \le \frac{1}{6} \delta_q(a_{i, 0} - 1) + \frac{1}{2} (a_{i, 0} - 1) + g.
	$$
	Hence the whole proof is completed.
\end{proof}

\begin{remark}
	From the above proof, it is easy to see that the inequality in Proposition \ref{comparisonl<2g-2} can be improved if $C$ is non-hyperelliptic. This improvement will result in a better inequality than that in Theorem \ref{3fold}, which we will treat in a subsequential paper.
\end{remark}

\section{Linear system on fibered $3$-folds over surfaces}

Let $f: X \to Y$ be a fibration from a normal $3$-fold $X$ to a smooth surface $Y$ defined over $k$ with general fiber $C$ a smooth curve of genus $g \ge 2$. Fix a smooth and very ample divisor $H \ge 0$ on $Y$ with genus $g(H) \ge 2$ ($H$ is indeed a smooth curve on $Y$). Write $G = f^*H$. 

The main theorem in this section is the following:

\begin{theorem} \label{relnoether32}
	Suppose that $L$ is a nef $\QQ$-divisor on $X$ such that 
	$L|_C \ge K_C \ge \rounddown{L|_C}$.
	\begin{itemize}
		\item[(1)] If $L^2G > 0$, then 
		$$
		h^0(\CO_X(\rounddown{L})) \le \left(\frac{1}{12} + \frac{1}{6g-6} \right) L^3 + g \left(\frac{L^3}{L^2G} + 1\right) + L^2G.
		$$
		\item[(2)] If $L^2G = 0$, then
		$$
		h^0(\CO_X(\rounddown{L})) \le g.
		$$
	\end{itemize}
\end{theorem}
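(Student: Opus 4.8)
The plan is to deduce Theorem \ref{relnoether32} from Theorem \ref{3fold} by manufacturing a $2$-tower of curve fibrations out of the fibration $f\colon X\to Y$ over the surface $Y$, and by interchanging the roles played by $L$ and its integral part.

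\smallskip

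First I would build the tower. Since $H$ is very ample, a general pencil in $|H|$ defines a rational map $Y\dashrightarrow \PP^1$; resolving its base locus (a reduced set of $H^2$ points, for a general pencil) by a blow-up $Y'\to Y$ produces a surface fibration $h\colon Y'\to B:=\PP^1$ whose general fiber is a general member of $|H|$, a smooth curve of genus $g(H)\ge 2$. Pulling back via $f$ and taking a resolution $\tilde X$ of the main component of $X\times_Y Y'$, I obtain a smooth $3$-fold together with $f'\colon \tilde X\to Y'$ and $\pi:=h\circ f'\colon \tilde X\to B$; by construction this is exactly the $2$-tower of Section $5$, the general fiber $F$ of $\pi$ being a smooth surface fibered over a general $H$ with general fiber $C$ of genus $g$ (here I use that a general very ample section $f^{-1}(H)$ of the normal $3$-fold $X$ is again normal, and then resolve). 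Writing $\sigma\colon \tilde X\to X$ for the induced birational morphism and $P:=\sigma^*L$, the pullback identities for nef divisors give $F\equiv \sigma^*G$, hence $P^3=L^3$ and $P^2F=L^2G$, so all the quantities on the right-hand side are preserved.

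\smallskip

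Next I would switch roles. Let $M$ be the movable part of the pullback system $|\sigma^*\lfloor L\rfloor|$ on (a further blow-up of) $\tilde X$, so that $M$ is base point free, $h^0(M)=h^0(\CO_X(\lfloor L\rfloor))$, and $0\le M\le \sigma^*\lfloor L\rfloor\le P$. The crucial point is that $M|_C$, being an \emph{integral} divisor with $M|_C\le L|_C$, is automatically bounded by $\lfloor L|_C\rfloor\le K_C$; thus $M$ plays the role of the special, base-point-free divisor of Theorem \ref{3fold}, while $P=\sigma^*L$ — which is nef, satisfies $P\ge M$ and $P|_C=L|_C\ge K_C$ — plays the role of the dominating divisor. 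In case (1) one has $P^2F=L^2G>0$, and by the Hodge index theorem on the surface $F$ the condition $M^2F>0$ already forces $\deg(M|_C)>0$; assuming this, Theorem \ref{3fold} applies verbatim and yields $h^0(M)\le(\tfrac1{12}+\tfrac1{6g-6})PM^2+(\tfrac{P^3}{P^2F}+1)g+P^2F$. Finally I would invoke Proposition \ref{intersectionnumber} with $0\le M\le P$ to replace $PM^2$ by $P^3=L^3$, and substitute $P^3=L^3$, $P^2F=L^2G$, giving precisely the inequality in (1).

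\smallskip

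Two degenerate situations remain, and this is where I expect the real work to lie. The first is the subcase $M^2F=0$ inside case (1) (where $\deg(M|_C)$ may vanish and Theorem \ref{3fold} does not apply directly): here I would instead feed $(M_i|_F)^2=0$ into Theorem \ref{surface}(1) and sum through the filtration via Proposition \ref{h0}, controlling the resulting sum of nef thresholds by Propositions \ref{sumai} and \ref{pl2} — the generous terms $g(\tfrac{L^3}{L^2G}+1)+L^2G$ on the right leave ample room. The second is case (2), where $L^2G=0=P^2F$: then $P|_F$, and hence every $M_i|_F$, is a nef divisor of self-intersection zero on the surface $F$, and the same degenerate estimate Theorem \ref{surface}(1) together with Clifford/Riemann--Roch on a single fiber $C$ collapses the sum to the genus bound $h^0(\CO_X(\lfloor L\rfloor))\le g$. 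The hard part will be the careful bookkeeping ensuring that the three birational modifications — the pencil blow-up on $Y$, the resolution of the fiber product, and the base-locus resolution of $|\lfloor L\rfloor|$ — simultaneously preserve all hypotheses of Theorem \ref{3fold} (smoothness of $F$, the genus of $C$, nefness of $P$, and the intersection identities), together with the clean treatment of these two boundary cases.
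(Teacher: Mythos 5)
Your main case is essentially the paper's argument: build the $2$-tower from a general pencil in $|H|$, pass to the movable part $M$ of $|\rounddown{L}|$ (so that $M|_C\le\rounddown{L|_C}\le K_C$), and apply Theorem \ref{3fold} with the pair $(M,P=\sigma^*L)$, then use $PM^2\le L^3$ and $P^2F=L^2G$. One organizational slip there: the dichotomy that matters is $\deg(M|_C)>0$ versus $\deg(M|_C)=0$, not $M^2F>0$ versus $M^2F=0$. Theorem \ref{3fold} needs $0<M|_C\le K_C$ and $P^2F>0$ but says nothing about $M^2F$, so it already covers every case with $\deg(M|_C)>0$; conversely, your fallback via Theorem \ref{surface}(1) requires $\deg(M_i|_C)=0$, not $(M_i|_F)^2=0$, so as written it does not cover the subcase $M^2F=0$, $\deg(M|_C)>0$ (which, again, Theorem \ref{3fold} handles). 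Once the split is made on $\deg(M|_C)$, your treatment of the truly degenerate subcase $\deg(M|_C)=0$ (Theorem \ref{surface}(1) plus Propositions \ref{sumai} and \ref{pl2}) can be made to work and the arithmetic closes; the paper instead observes that $M=f^*D$ for a base-point-free $D$ on $Y$ and bounds $h^0(D)$ on the fibered surface $Y'$ by Theorem \ref{surface}(2), which is shorter.

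The genuine gap is case (2). When $L^2G=0$ you still have $\deg(L|_C)\ge 2g-2>0$, and $M$ may well satisfy $\deg(M|_C)>0$ (take $M=\rounddown{L}$ with $\rounddown{L}|_C=K_C$), so Theorem \ref{surface}(1) does not apply to the $M_i|_F$, and Clifford on $C$ only gives $h^0(M_i|_F)\le h^0(M_i|_C)\le g$ termwise. Worse, Proposition \ref{sumai} gives no control on $\sum_i a_i$ because its right-hand side is multiplied by $P^2F=0$; so the very first step $h^0(M)\le\sum_i a_ih^0(M_i|_F)$ can exceed $g$ by an unbounded amount, and no filtration bookkeeping will collapse it back to $g$. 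The paper avoids filtrations entirely here: from $L^2G=0$ and Proposition \ref{intersectionnumber} one sees that $M-G$ cannot be effective (otherwise $L^2G\ge LG^2=\deg(L|_C)H^2>0$), whence $h^0(M)\le h^0(M|_G)$; the same trick on the fibered surface $G\to H$ shows $M|_G-C$ is not effective, whence $h^0(M|_G)\le h^0(M|_C)\le g$. This two-step restriction argument is the missing idea in your proposal.
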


\begin{proof}
	Without loss of the generality, we may assume that $L \ge 0$.
	Replacing $X$ by an appropriate resolution, we may further assume that there is a decomposition
	$$
	L = M + Z
	$$
	on $X$ such that
	\begin{itemize}
		\item $|M|$ is base point free;
		\item $Z \ge 0$ is $\QQ$-divisor;
		\item $h^0(M) = h^0(\CO_X(\rounddown{L}))$.
	\end{itemize}
	In fact, we may even assume that $X$ is smooth as the resolution of singularities in dimension three is also known in positive characteristics \cite{Cossart_Piltant_Resolution1,Cossart_Piltant_Resolution2,Cutkosky_Resolution}. In particular, we have
	$$
	M|_C \le \rounddown{L}|_C \le \rounddown{L|_C} \le K_C.
	$$
	
	In order to prove (1), we first assume that $\deg(M|_C) > 0$. Now choose a general pencil in $|H|$ and denote by $\sigma_Y: Y' \to Y$ the blowing up of the indeterminacies of this pencil. Then $Y'$ is a fibered surface over $\mathbb{P}^1$. Let $X'= X \times_Y Y'$. Then we have the following diagram:
	$$
	\xymatrix{
		X' \ar[r]^{\sigma_X} \ar[d]_{f'} \ar@/_2pc/[dd]_{\pi} & X \ar[d]^{f} \\
		Y' \ar[r]^{\sigma_Y} \ar[d]_{h} & Y \\
		\mathbb{P}^1
	}
	$$
	In particular, $\pi: X' \to \mathbb{P}^1$ is a $2$-tower of curve fibrations. Let $F$ be a general fiber of $\pi$. Then $F=\sigma^*_X G-E_X$ where $E_X$ is the exceptional divisor of $\sigma_X$. For simplicity, we still denote by $C$ a general fiber of $f'$. 
	
	Write $L' = \sigma^*_X L$ and $M'=\sigma^*_X M$. By our assumption, $|M'|$ is base point free and $\deg(M'|_C) > 0$. More importantly, $L'^2F = L^2G > 0$. Therefore, by Theorem \ref{3fold}, we have
	$$
	h^0(M') \le \left(\frac{1}{12} + \frac{1}{6g-6} \right) L'M'^2 + g \left(\frac{L'^3}{L'^2F} + 1 \right) + L'^2F,
	$$
	which implies that
	$$
	h^0(\CO_X(\rounddown{L})) \le \left(\frac{1}{12} + \frac{1}{6g-6} \right) L^3 + g \left(\frac{L^3}{L^2G} + 1 \right) + L^2G.
	$$
	
	Second, we assume that $\deg(M|_C) = 0$. Since $|M|$ is base point free, there is a divisor $D \ge 0$ on $Y$ such that $|D|$ is base point free and $M = f^*D$. In particular, as $f$ has connected fibers, we deduce that
	$$
	h^0(M) = h^0(D).
	$$
	If $D=0$, then
	$$
	h^0(M) = h^0(D) = 1,
	$$
	and the inequality in (1) still holds. If $D \ne 0$, we have $DH > 0$ as $H$ is very ample. In this case, we use the fibration $h$ to estimate $h^0(D)$. Denote by $H'$ a general fiber of $h$. Then $H' = \sigma^*_Y H - E_Y$ where $E_Y$ is the exceptional divisor of $\sigma_Y$. Denote $D'= \sigma^*_Y D$. Then $D'H' = DH > 0$. Since $g(H')=g(H) \ge 2$, by Theorem \ref{surface} (2), 
	$$
	h^0(D') \le \frac{1}{2} D'^2 + D'H',
	$$
	which implies that
	$$
	h^0(M) \le \frac{1}{2} D^2 + DH \le \frac{1}{4g-4} \left(LM^2 + 2 LMG\right) \le \frac{1}{4g-4} \left(L^3 + 2 L^2 G \right).
	$$
	Since we always have
	$$
	\frac{1}{4g-4} \le \frac{1}{12} + \frac{1}{6g-6} \le \frac{1}{4}
	$$
	when $g \ge 2$, the inequality in (1) still holds in this case.

	Now let us prove (2). In this case, we deduce that $M-G$ can not be effective. Otherwise, by Proposition \ref{intersectionnumber}, we would have
	$$
	L^2G \ge LMG \ge LG^2 = \deg(L|_C) H^2 \ge (2g-2) H^2 > 0.
	$$
	This is a contradiction. Therefore, from the long exact sequence
	$$
	0 \to H^0(M-G) \to H^0(M) \to H^0(M|_G),
	$$
	we know that
	$$
	h^0(M) \le h^0(M|_G).
	$$
	Notice that $G$ is fibered over $H$ with a general fiber $C$. We further deduce that $M|_G-C$ can not be effective. Otherwise, we would have
	$$
	L^2G \ge LMG = (L|_G) (M|_G) \ge (L|_G)C \ge 2g-2>0,
	$$
	which is again a contradiction. In a similar way, we can get
	$$
	h^0(M|_G) \le h^0(M|_C) \le g.
	$$
	Hence the whole proof is completed.
\end{proof}


\section{$F$-stable dimension and the Chern character of vector bundles}
Our main goal in this section is to set up the relation between $h^0_F(\CE)$ and $\ch_2(\CE)$ for certain vector bundle $\CE$ over surfaces in positive characteristics. Throughout this section, we assume that $\charr k = p > 0$.

\subsection{$F$-stable dimension of global sections}
Let $V$ be a variety defined over $k$. Let $F^e$ be the $e^{\rm th}$ absolute Frobenius morphism of $V$. In Definition \ref{h0f}, for any coherent sheaf $\mathcal E$ on $V$, the \emph{$F$-stable dimension of global sections} $h^0_F(\mathcal{E})$ of $\mathcal E$ is defined as
$$
h^0_F(\mathcal E) := \liminf_{e \to \infty} \frac{h^0(F^{e*} \mathcal E)}{p^{e \dim V}}.
$$

In fact, under certain circumstances, the invariant $h^0_F$ is closely related to some well-understood invariants.

\begin{prop} \label{h0fl}
	Let $V$ be a variety of dimension $n$ over $k$. Let $\CL$ be an ample line bundle on $V$. Then 
	$$
	h^0_F(\mathcal{L}) = \lim\limits_{e \to \infty} \frac{h^0(F^{e*}\mathcal{L})}{p^{ne}} = \frac{1}{n!} c^n_1(\CL) = \ch_n(\CL).
	$$
\end{prop}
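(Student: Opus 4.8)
The plan is to reduce everything to asymptotic Riemann--Roch after computing the Frobenius pullback explicitly. First I would observe that the absolute Frobenius $F$ on $V$ is the identity on the underlying topological space and the $p$-th power map on $\CO_V$, so it raises the transition functions of any line bundle to their $p$-th powers. Consequently $F^*\CL \cong \CL^{\otimes p}$, and iterating gives $F^{e*}\CL \cong \CL^{\otimes p^e}$. In particular $h^0(F^{e*}\CL) = h^0(\CL^{\otimes p^e})$, so the entire computation becomes a statement about the growth of the space of sections of high tensor powers of an ample line bundle.

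Next I would invoke asymptotic Riemann--Roch. Since $\CL$ is ample, Serre vanishing gives $h^i(\CL^{\otimes m}) = 0$ for all $i > 0$ once $m \gg 0$, whence $h^0(\CL^{\otimes m}) = \chi(\CL^{\otimes m})$ for large $m$. The Euler characteristic $\chi(\CL^{\otimes m})$ is a numerical polynomial in $m$ of degree $n = \dim V$ with leading coefficient $c_1^n(\CL)/n!$, where $c_1^n(\CL)$ denotes the top self-intersection number defined for an arbitrary projective variety via the Hilbert polynomial. This input is characteristic-free, as it relies only on Euler characteristics and Serre vanishing, both of which are available over any algebraically closed field.

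Substituting $m = p^e$ then yields
$$
h^0(F^{e*}\CL) = \frac{c_1^n(\CL)}{n!}\, p^{en} + O\!\left(p^{e(n-1)}\right),
$$
so that dividing by $p^{en}$ and letting $e \to \infty$ produces the limit $c_1^n(\CL)/n!$. Because this is a genuine limit and not merely a limit inferior, it coincides with the $\liminf$ appearing in Definition \ref{h0f}, giving $h^0_F(\CL) = c_1^n(\CL)/n!$. Finally, since $\ch(\CL) = e^{c_1(\CL)}$, its degree-$n$ component is $\ch_n(\CL) = c_1^n(\CL)/n!$, which closes the chain of equalities claimed in the statement.

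None of these steps is deep, and I do not expect a serious obstacle. The only points deserving a little care are the identification $F^{e*}\CL \cong \CL^{\otimes p^e}$ for the \emph{absolute} (rather than relative) Frobenius, and the fact that the form of asymptotic Riemann--Roch used is valid for possibly singular projective varieties in positive characteristic; both are standard, so the argument should be short and clean.
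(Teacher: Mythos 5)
Your argument is correct and is essentially identical to the paper's own proof: both identify $F^{e*}\CL$ with $\CL^{\otimes p^e}$, apply Serre vanishing to replace $h^0$ by $\chi$, and invoke asymptotic Riemann--Roch to extract the leading term $c_1^n(\CL)/n!$. No further comment is needed.
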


\begin{proof}
	The result is essentially from the Riemann-Roch theorem and the Serre vanishing theorem. Notice that $F^{e*}\CL = \CL^{\otimes p^e}$. When $e$ is sufficiently large, the Serre vanishing theorem asserts that
	$$
	h^i(F^{e*}\mathcal{L}) = 0, \quad \forall i>0.
	$$
	Therefore, by the Riemann-Roch theorem,
	$$
	h^0(F^{e*}\mathcal{L}) = \frac{p^{ne}}{n!} c^n_1(\CL) + o(p^{ne}).
	$$
	The result follows by letting $e \to \infty$.
\end{proof}

In certain sense, Proposition \ref{h0fl} suggests that there may be certain relation between $h^0_F(\CE)$ and $\ch_n(\CE)$ when $\CE$ is ``positive". In the following, we mainly focus on this relation when $\dim V = 2$.

\subsection{Semi-positive vector bundles over curves}
Let $C$ is a smooth curve of genus $g$ over $k$. We have the following lemma which is definitely known to the experts.

\begin{lemma} \label{vbcurve}
	For any semi-positive vector bundle $\CE$ of rank $r$ over $C$,
	$$
	h^1(\CE) \le 2rg.
	$$
\end{lemma}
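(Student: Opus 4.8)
The plan is to pass to the dual via Serre duality and then estimate the number of sections by twisting down by points, so that semi-positivity enters only as a bound on the degrees of sub-line-bundles. First I would write, by Serre duality on the smooth curve $C$, that $h^1(\CE) = h^0(\CE^\vee \otimes \omega_C)$, so it suffices to bound $h^0(\mathcal{G})$ where $\mathcal{G} := \CE^\vee \otimes \omega_C$ is a bundle of rank $r$. The semi-positivity of $\CE$ enters through the following elementary translation: since $H_\CE$ is nef, every quotient line bundle of $\CE$ has non-negative degree, and dualizing a saturated sub-line-bundle $L \subseteq \CE^\vee$ into a quotient line bundle $\CE \twoheadrightarrow L^\vee$ shows $\deg L \le 0$. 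Consequently every sub-line-bundle of $\mathcal{G} = \CE^\vee \otimes \omega_C$ has degree at most $\deg \omega_C = 2g-2$.

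Next I would bound $h^0(\mathcal{G})$ by iterated restriction to a point. Fixing a closed point $P \in C$, the exact sequence $0 \to \mathcal{G}(-P) \to \mathcal{G} \to \mathcal{G}|_P \to 0$ together with $\dim \mathcal{G}|_P = r$ gives $h^0(\mathcal{G}) \le h^0(\mathcal{G}(-P)) + r$, and iterating $n$ times yields $h^0(\mathcal{G}) \le h^0(\mathcal{G}(-nP)) + nr$. Now $\mathcal{G}(-nP)$ has all of its sub-line-bundles of degree at most $2g-2-n$; as soon as $n \ge 2g-1$ this is negative, and then $\mathcal{G}(-nP)$ carries no nonzero global section, since a nonzero map $\CO_C \to \mathcal{G}(-nP)$ would saturate to a sub-line-bundle of non-negative degree. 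Taking $n = 2g-1$ gives $h^0(\mathcal{G}) \le (2g-1)r \le 2rg$, which is the desired inequality.

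The only genuinely non-formal input is the translation of semi-positivity into the degree bound on sub-line-bundles of $\CE^\vee$, namely the standard fact that a nef vector bundle on a curve has all quotient line bundles of non-negative degree; I would verify this directly from the definition, using that a quotient line bundle $\CE \twoheadrightarrow L$ corresponds to a section of $\PP(\CE) \to C$ on which $H_\CE$ restricts to $L$, so that $\deg L = H_\CE \cdot (\text{section}) \ge 0$ by nefness. Everything else is characteristic-free, so the argument is valid over $k$ of arbitrary characteristic, and I expect fixing the convention for $\PP(\CE)$ and this correspondence to be the only point needing care. Alternatively one could run the Harder--Narasimhan filtration of $\CE$, whose subquotients are semistable of non-negative slope, and apply the same twist-down estimate to each semistable piece, but the direct argument above avoids that machinery entirely.
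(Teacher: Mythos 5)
Your argument is correct and is essentially the paper's own proof: both pass to $h^0(\CE^\vee\otimes\omega_C)$ by Serre duality and kill this group after twisting down by a divisor of degree $2g-1$, the only cosmetic difference being that the paper twists by $2g-1$ distinct points $\Sigma$ and justifies the vanishing of $h^0(\CO_C(K_C-\Sigma)\otimes\CE^\vee)$ by positivity of $\CO_C(\Sigma-K_C)\otimes\CE$, whereas you twist $2g-1$ times at a single point and justify the vanishing via the degree bound on sub-line-bundles of $\CE^\vee$ coming from nefness. Both routes give the same estimate $h^0(\CE^\vee\otimes\omega_C)\le r(2g-1)\le 2rg$.
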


\begin{proof}
	By Serre duality, it is equivalent to show that
	$$
	h^0(\CO_C(K_C) \otimes \CE^\vee) \le 2rg.
	$$
	Notice that if $C$ is isomorphic to $\mathbb{P}^1$, then $\CO_C(-K_C) \otimes \CE$ is positive. Hence 
	$$
	h^0(\CO_C(K_C) \otimes \CE^\vee) = 0,
	$$
	and the result holds in this case.
	
	Now suppose that $g > 0$. Take a divisor $\Sigma = q_0 + q_1 + \ldots + q_{2g-2}$ on $C$ consisting of $2g-1$ (distinct) points. Consider the following exact sequence
	$$
	0 \to H^0(\CO_C(K_C-\Sigma) \otimes \CE^\vee) \to H^0(\CO_C(K_C) \otimes \CE^\vee) 
	\to H^0(\CO_\Sigma(K_C) \otimes \CE^\vee|_\Sigma).
	$$
	Since $\CO_C(\Sigma-K_C) \otimes \CE$ is positive, we know that
	$$
	h^0(\CO_C(K_C-\Sigma) \otimes \CE^\vee) = 0.
	$$
	Therefore, we deduce that 
	$$
	h^0(\CO_C(K_C) \otimes \CE^\vee) \le h^0(\CO_\Sigma(K_C) \otimes \CE^\vee|_\Sigma) \le r (2g-1).
	$$
	The proof is completed.
\end{proof}

\subsection{Semi-positive vector bundles over surfaces}
Let $Y$ be a smooth surface over $k$. In the following, we generalize Lemma \ref{vbcurve} to semi-positive vector bundles over $Y$.

\begin{lemma} \label{vbsurface}
	There exists a smooth curve $C$ on $Y$ such that  
	$$
	h^2(\mathcal{E}) \le 2rg(C)
	$$ 
	for any semi-positive vector bundle $\CE$ of rank $r$ over $Y$.
\end{lemma}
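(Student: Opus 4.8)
The plan is to reduce the surface statement to the curve case already settled in Lemma \ref{vbcurve}, by restricting to a suitably chosen smooth curve $C \subset Y$ and controlling the resulting error term via Serre duality. First I would fix an ample divisor $H_0$ on $Y$ and set $A = mH_0$ for $m \gg 0$, so that $A$ is very ample and, crucially, $A^2 > K_Y \cdot A$ (possible since $H_0^2 > 0$). By Bertini a general member $C \in |A|$ is a smooth curve, valid in every characteristic, and this $C$ is the curve promised in the statement. The numerical condition $A^2 > K_Y \cdot A$ is imposed precisely to force a uniform vanishing in the next step.

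The key step is to show that $H^2(\CE(C)) = 0$ for \emph{every} semi-positive $\CE$. By Serre duality this group equals $H^0(\CE^\vee \otimes \omega_Y(-C))$, and a nonzero global section is the same datum as a nonzero homomorphism $\phi \colon \CE \to \omega_Y(-C) =: M$. Its image $\CG$ is a rank-one torsion-free quotient of $\CE$, hence nef, because a quotient of a nef bundle is nef (the embedding $\PP(\CG) \hookrightarrow \PP(\CE)$ restricts the nef class $H_\CE$); therefore $c_1(\CG) \cdot A \ge 0$. On the other hand $\CG \hookrightarrow M$ forces $c_1(\CG) \le c_1(M) = K_Y - A$, so $c_1(\CG) \cdot A \le (K_Y - A) \cdot A < 0$, a contradiction. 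Hence no such $\phi$ exists and $H^2(\CE(C)) = 0$.

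With this vanishing in hand, the restriction sequence
$$
0 \to \CE \to \CE(C) \to \CE(C)|_C \to 0
$$
gives, upon taking cohomology and using both $H^2(\CE(C)) = 0$ and $H^2(\CE(C)|_C) = 0$ (the latter since $\CE(C)|_C$ is supported on the curve $C$), a surjection $H^1(\CE(C)|_C) \twoheadrightarrow H^2(\CE)$, so that $h^2(\CE) \le h^1(\CE(C)|_C)$. Now $\CE|_C$ is semi-positive, as the restriction of the nef divisor $H_\CE$ to $\PP(\CE|_C) = \PP(\CE) \times_Y C$ remains nef, and $\CE(C)|_C = \CE|_C \otimes \CO_C(C)$ is its twist by the ample line bundle $\CO_C(C) = A|_C$, hence again a semi-positive bundle of rank $r$ on $C$. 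Applying Lemma \ref{vbcurve} yields $h^1(\CE(C)|_C) \le 2r\, g(C)$, and combining the two inequalities completes the proof.

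The main obstacle is the uniform vanishing $H^2(\CE(C)) = 0$: ordinary Serre vanishing is of no use here, because the twisting divisor $C$ must be fixed once and for all, \emph{before} $\CE$ is specified, so the vanishing has to hold simultaneously across the entire unbounded family of semi-positive bundles of a given rank. This is exactly what the nef-quotient argument, combined with the numerical choice $A^2 > K_Y \cdot A$, delivers, and it is the only place where the semi-positivity of $\CE$ is genuinely exploited on the surface itself rather than after restriction to $C$.
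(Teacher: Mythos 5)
Your proof is correct and follows essentially the same route as the paper's: fix a smooth curve in a suitably positive linear system once and for all, kill the relevant $H^2$ on the surface (equivalently, an $H^0$ of a dual by Serre duality) using the semi-positivity of $\CE$ against the ample class, and then invoke Lemma \ref{vbcurve} on the restriction to the curve. The only differences are cosmetic — you take $C\in|A|$ and bound $h^1(\CE(C)|_C)$ where the paper takes $C\in|K_Y+A|$ and dualizes first — and your nef-quotient vanishing argument is the same mechanism the paper uses when it asserts $h^0(\CO_Y(-A)\otimes\CE^\vee)=0$.
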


\begin{proof}
	Let $\CE$ be any semi-positive vector bundle $\CE$ of rank $r$ over $Y$. By Serre duality again, it suffices to show that 
	$$
	h^0(\CO_Y(K_Y) \otimes \mathcal{E}^\vee) \le 2rg(C)
	$$
	for certain $C$ on $Y$ which is independent of $\CE$.
	
	Fix an ample divisor $A$ on $Y$ such that $A+K_Y \ge 0$ is ample. Replacing $A$ by its tensor power if necessary, we can find a smooth curve $C \in |A+K_Y|$ on $Y$ by Bertini's theorem. Now we have the exact sequence
	$$
	0 \to \CO_Y(-A) \otimes \mathcal{E}^\vee \to \CO_Y(K_Y) \otimes \mathcal{E}^\vee \to \CO_C(K_Y) \otimes (\mathcal{E}|_C)^\vee \to 0.
	$$
	Notice that the bundle $\CO_Y(A) \otimes \mathcal{E}$ is positive. In particular, 
	$$
	h^0(\CO_Y(-A) \otimes \mathcal{E}^\vee) = 0.
	$$
	Therefore, we deduce that 
	$$
	h^0(\CO_Y(K_Y) \otimes \mathcal{E}^\vee) \le h^0(\CO_C(K_Y) \otimes (\mathcal{E}|_C)^\vee) \le h^0(\CO_C(K_Y+C) \otimes (\mathcal{E}|_C)^\vee).
	$$
	Moreover, by the adjunction and Serre duality, we have 
	$$
	h^0(\CO_C(K_Y+C) \otimes (\mathcal{E}|_C)^\vee)  = h^0(\CO_C(K_C) \otimes (\mathcal{E}|_C)^\vee) = h^1(\CE|_C).
	$$
	Since $\CE|_C$ is semi-positive of rank $r$ over $C$, by Lemma \ref{vbcurve},
	$$
	h^1(\CE|_C) \le 2rg(C).
	$$
	Hence the proof is completed.
\end{proof}

The main result here is the following proposition.
\begin{prop} \label{h0ch2}
	Let $\CE$ be a semi-positive vector bundle over $Y$. Then
	$$
	h^0_F(\mathcal E) \ge \ch_2(\mathcal E).
	$$
\end{prop}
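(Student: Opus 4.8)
The plan is to bound $h^0(F^{e*}\CE)$ from below by $\chi(F^{e*}\CE) - h^2(F^{e*}\CE)$, then to show that after dividing by $p^{2e}$ the $h^2$-contribution is asymptotically negligible while $\chi(F^{e*}\CE)$ is asymptotic to $p^{2e}\ch_2(\CE)$. The reduction $h^0 \ge \chi - h^2$ is just $\chi = h^0 - h^1 + h^2$ together with $h^1 \ge 0$.

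First I would record that Frobenius pullback preserves semi-positivity of rank: nefness of a divisor is preserved under arbitrary pullback, and $\CO_{\PP(F^{e*}\CE)}(1)$ is the pullback of $\CO_{\PP(\CE)}(1)$ along the finite morphism $\PP(F^{e*}\CE) \to \PP(\CE)$ induced by $F^e$, so each $F^{e*}\CE$ is again semi-positive of the same rank $r$. This is exactly the hypothesis needed to invoke Lemma \ref{vbsurface}, which then produces a \emph{single} smooth curve $C$ on $Y$, independent of $e$, with
$$
h^2(F^{e*}\CE) \le 2 r g(C).
$$
The decisive feature is that this bound is constant in $e$.

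Next I would compute $\chi(F^{e*}\CE)$ by Hirzebruch--Riemann--Roch on the surface $Y$,
$$
\chi(F^{e*}\CE) = \ch_2(F^{e*}\CE) + \tfrac{1}{2}\ch_1(F^{e*}\CE)\cdot c_1(T_Y) + r\,\chi(\CO_Y),
$$
and invoke the standard scaling of cycle classes under the absolute Frobenius: $F^{e*}$ multiplies a codimension-$j$ class by $p^{ej}$ (equivalently, $F^{e*}$ acts on $\mathrm{Pic}(Y)$ by $p^e$, and $\deg F^e = p^{2e}$). Hence $\ch_2(F^{e*}\CE) = p^{2e}\ch_2(\CE)$ and $\ch_1(F^{e*}\CE) = p^e \ch_1(\CE)$, so
$$
\chi(F^{e*}\CE) = p^{2e}\ch_2(\CE) + p^e \cdot \tfrac{1}{2}\ch_1(\CE)\cdot c_1(T_Y) + r\,\chi(\CO_Y),
$$
where the last two terms have orders $p^e$ and $1$ respectively.

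Finally, combining $h^0(F^{e*}\CE) \ge \chi(F^{e*}\CE) - h^2(F^{e*}\CE)$ with the $e$-uniform bound $h^2(F^{e*}\CE)\le 2rg(C)$, dividing through by $p^{2e}$ and letting $e\to\infty$, the lower-order terms of $\chi$ and the bounded $h^2$-term all vanish, leaving $h^0_F(\CE) = \liminf_e p^{-2e}h^0(F^{e*}\CE) \ge \ch_2(\CE)$. The only substantive input is Lemma \ref{vbsurface}, which supplies the uniform control of $h^2$ along the Frobenius tower; I expect that to be the genuine point of the argument, with the remainder being Riemann--Roch bookkeeping and the routine behaviour of Chern classes under Frobenius.
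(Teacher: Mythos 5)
Your proposal is correct and follows essentially the same route as the paper: bound $h^0(F^{e*}\CE)$ below by $\chi(F^{e*}\CE) - h^2(F^{e*}\CE)$, note that $F^{e*}\CE$ remains semi-positive so Lemma \ref{vbsurface} gives an $e$-uniform bound on $h^2$, and conclude by Riemann--Roch and the Frobenius scaling of Chern characters after dividing by $p^{2e}$. Your additional remarks on why semi-positivity is preserved under Frobenius pullback and the explicit expansion of $\chi$ are details the paper leaves implicit, but the argument is identical.
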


\begin{proof}
	By the Riemann-Roch theorem for $\CE$, when $e$ is sufficiently large, we have
	$$
	h^0(F^{e*} \mathcal E) + h^2(F^{e*} \mathcal E) \ge \chi(F^{e*} \mathcal E) =  p^{2e} \ch_2(\CE) + o(p^{2e}).
	$$
	Notice that $F^{e*} \mathcal E$ is also semi-positive over $Y$. By Lemma \ref{vbsurface}, we know that $h^2(F^{e*} \mathcal E)$ is uniformly bounded above by a constant depending only on $Y$. Hence the result follows by letting $e \to \infty$.
\end{proof}

\begin{remark} \label{h0ch1}
	Let $\mathcal{E}$ be any vector bundle of rank $r$, not necessarily semi-positive, over a smooth curve $C$. Observe that
	$$
	h^0(F^{e*} \mathcal{E}) \ge \deg(F^{e*} \mathcal{E}) + r\chi(\CO_C) = p^e \deg(\mathcal{E}) + r\chi(\CO_C).
	$$
	We immediately obtain that
	$$
	h^0_F(\mathcal{E}) \ge \deg(\mathcal{E}) = \ch_1(\mathcal{E}).
	$$
\end{remark}

\subsection{Almost semi-positive vector bundles over surfaces}
In this subsection, we generalize Proposition \ref{h0ch2} to vector bundles over $Y$ which are very close to be semi-positive. Here $Y$ still denotes a smooth surface over $k$.

For the convenience of the reader, we recall some notation that have been introduced in Section $2$. For a vector bundle $\CE$ over $Y$,  there is the projection map $\pi_{\CE}: \mathbb{P}(\CE) \to Y$. Let $H_{\CE}$ be a divisor associated to $\CO_{\mathbb{P}(\CE)} (1)$. In the following, $A \ge 0$ denotes a very ample divisor on $Y$ and $m$ is a positive integer. 
\begin{lemma} \label{h2bound}
	Let $\mathcal{E}$ be a vector bundle of rank $r$ over $Y$ such that $H_{\CE} + \frac{1}{m} \pi^*_{\CE} A$ is nef. Then for any $e > 0$, we have
	$$
	h^2(F^{e*} \CE) \le \left(\frac{p^e}{m} + 1\right)^2 r A^2 + \left(\frac{p^e}{m} + 1\right) r K_Y A + c,
	$$
	where $c$ is a constant only depending on $r$ and $Y$.
\end{lemma}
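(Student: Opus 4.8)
The plan is to derive the bound from the semi-positive cases already treated in Lemma \ref{vbcurve} and Lemma \ref{vbsurface}, by first transporting the twisted-nef hypothesis through the Frobenius and then correcting the failure of semi-positivity by an integral twist from $Y$. The first step is to see how the hypothesis behaves under $F^{e*}$. Since projectivization commutes with base change, the induced map $G\colon \PP(F^{e*}\CE)\to\PP(\CE)$ is a finite surjective morphism fitting in $\pi_{\CE}\circ G = F^e\circ\pi_{F^{e*}\CE}$, so that $G^*H_{\CE}=H_{F^{e*}\CE}$ and $G^*\pi^*_{\CE}A=\pi^*_{F^{e*}\CE}F^{e*}A=p^e\,\pi^*_{F^{e*}\CE}A$, using $F^{e*}\CO_Y(A)=\CO_Y(p^eA)$. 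As the pullback of a nef divisor along a morphism of projective varieties is nef, the hypothesis forces
$$
H_{F^{e*}\CE}+\frac{p^e}{m}\,\pi^*_{F^{e*}\CE}A \ \text{ to be nef.}
$$
Writing $\CE_e=F^{e*}\CE$, this says $\CE_e$ is semi-positive after the rational twist by $\tfrac{p^e}{m}A$.

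Next I would remove the rationality and reduce to the two earlier lemmas. Set $s=\roundup{p^e/m}$, so $p^e/m\le s\le p^e/m+1$, and let $\mathcal{F}=\CE_e\otimes\CO_Y(sA)$. Then $H_{\mathcal F}=H_{\CE_e}+s\,\pi^*_{\CE_e}A$ is the sum of the nef divisor above and the nef divisor $(s-p^e/m)\pi^*_{\CE_e}A$, so $\mathcal F$ is semi-positive on $Y$. Choosing a smooth curve $D\in|sA|$ by Bertini (note $sA$ is very ample) gives $0\to\CE_e\to\mathcal F\to\mathcal F|_D\to 0$; since $D$ is a curve, $H^2(\mathcal F|_D)=0$, and the long exact sequence yields $h^2(\CE_e)\le h^1(\mathcal F|_D)+h^2(\mathcal F)$. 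Lemma \ref{vbsurface} bounds $h^2(\mathcal F)\le 2rg(C_0)$ by a constant depending only on $Y$ and $r$, while $\mathcal F|_D$ is semi-positive of rank $r$ on the smooth curve $D$, so Lemma \ref{vbcurve} gives $h^1(\mathcal F|_D)\le 2rg(D)$. By adjunction $2g(D)-2=sA\cdot(sA+K_Y)=s^2A^2+sK_YA$, hence $2rg(D)=rs^2A^2+rsK_YA+2r$. Substituting $s\le p^e/m+1$ and using $A^2>0$ controls the quadratic term, whereas the linear term is bounded by $r(p^e/m+1)K_YA$ up to an additive $r|K_YA|$ (needed only when $K_YA<0$, where one instead uses $s\ge p^e/m$). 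This produces the stated inequality with $c=2r+r|K_YA|+2rg(C_0)$, depending only on $r$, $Y$ and the fixed divisor $A$.

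The main obstacle I anticipate is the very first step: one must be sure the nef hypothesis genuinely transfers to the Frobenius pullback with the correct scaling factor $p^e$, which rests on the compatibility of $\PP(-)$ with the base change $F^e$ together with $F^{e*}\CO_Y(A)=\CO_Y(p^eA)$. Once $\CE_e$ is known to be semi-positive after the twist by $\tfrac{p^e}{m}A$, the passage to the two previous lemmas via a single hyperplane-section sequence is routine, and the only remaining care is the sign bookkeeping for $K_YA$ in matching the coefficient $p^e/m+1$ exactly.
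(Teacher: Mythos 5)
Your proposal is correct and follows essentially the same route as the paper: pull the twisted-nef condition back through the Frobenius (picking up the factor $p^e$), round the rational twist up to an integral multiple of $A$ so that $F^{e*}\CE\otimes\CO_Y(sA)$ becomes semi-positive, and then bound $h^2$ via the hyperplane-section exact sequence combined with Lemma \ref{vbcurve}, Lemma \ref{vbsurface} and adjunction. The only cosmetic difference is your choice $s=\roundup{p^e/m}$ versus the paper's $\alpha_e=\rounddown{p^e/m}+1$, and your explicit sign bookkeeping for $K_YA$, which the paper likewise absorbs into the constant $c$.
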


\begin{proof}
	For any $e > 0$, we denote $F^{e*} \CE$ by $\CE^{(e)}$ for simplicity. Then we have the following commutative diagram:
	$$
	\xymatrix{
		\mathbb{P}(\CE^{(e)})  \ar@{->}[r]^{F^e_{\mathbb{P}}} \ar@{->}[d]_{\pi_{\CE^{(e)}}} & \mathbb{P}(\CE)  \ar@{->}[d]^{\pi_{\CE}} \\
		Y \ar@{->}[r]^{F^e} & Y
	}
	$$
	In particular, we have $F^{e*}_\PP(\pi^*_\CE A) = p^e \pi^*_{\CE^{(e)}} A$ as $F^{e*} A = p^e A$.
	
	By our assumption, $H_{\CE} + \frac{1}{n} \pi^*_{\CE} A$ is nef. By pulling back via $F^e_{\PP}$, we know that $H_{\CE^{(e)}} + \frac{p^e}{m} \pi^*_{\CE^{(e)}} A$ is nef. Let $\alpha_e = \rounddown{\frac{p^e}{m}} + 1$. Then $H_{\CE^{(e)}} + \alpha_e \pi^*_{\CE^{(e)}} A$ is ample, which implies that $\CE^{(e)} \otimes \CO_Y(\alpha_e A)$ is positive.
	
	Choose a smooth curve $C \in |\alpha_e A|$. Then we have the exact sequence
	$$
	0 \to \CE^{(e)} \to \CE^{(e)}  \otimes \CO_Y(\alpha_e A) \to \CE^{(e)}|_C \otimes \CO_C(\alpha_e A)  \to 0.
	$$
	Taking the cohomology, we obtain
	$$
	H^1(\CE^{(e)}|_C \otimes \CO_C(\alpha_e A)) \to H^2 (\CE^{(e)}) \to H^2 (\CE^{(e)} \otimes \CO_Y(\alpha_e A)) \to 0.
	$$
	By Lemma \ref{vbsurface}, we can find a number $c'>0$ independent on $\CE$ and $e$ such that 
	$$
	h^2 (\CE^{(e)} \otimes \CO_Y(\alpha_e A)) \le c'.
	$$
	Furthermore, by Lemma \ref{vbcurve} and the adjunction formula, we know that 
	$$
	h^1(\CE^{(e)}|_C \otimes \CO_C(\alpha_e A)) \le 2r g(C) = r(\alpha_e^2 A^2 + \alpha_e K_YA + 2).
	$$
	Combine the above two results and set $c=c'+2r$. Then the proof is completed.
\end{proof}

\begin{remark}
	The author was informed by Professor Adrian Langer that in \cite[Theorem 4.1]{Langer_Moduli}, a result similar to Lemma \ref{h2bound} was proved.
\end{remark}

Finally, we get the following proposition.

\begin{prop} \label{h0ch2weak}
	Let $\mathcal{E}$ be a vector bundle of rank $r$ on $Y$ such that $H_{\CE} + \frac{1}{m} \pi^*_{\CE} A$ is nef. Then
	$$
	h^0_F(\CE) \ge \ch_2(\CE) - \frac{r}{m^2}A^2.
	$$
\end{prop}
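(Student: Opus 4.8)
The plan is to mirror the proof of Proposition \ref{h0ch2}, replacing the uniform bound on $h^2$ coming from Lemma \ref{vbsurface} by the quantitative estimate of Lemma \ref{h2bound}, and then to keep careful track of the leading term as $e \to \infty$. The starting point is the Hirzebruch--Riemann--Roch formula on the smooth surface $Y$: for any vector bundle $\mathcal{F}$ of rank $r$,
$$
\chi(\mathcal{F}) = \ch_2(\mathcal{F}) - \frac{1}{2}\ch_1(\mathcal{F}) \cdot K_Y + r\chi(\CO_Y).
$$
Since the $e^{\nth}$ absolute Frobenius $F^e$ scales the cohomology class of a divisor by $p^e$, one has $\ch_1(F^{e*}\CE) = p^e \ch_1(\CE)$ and $\ch_2(F^{e*}\CE) = p^{2e}\ch_2(\CE)$, so that
$$
\chi(F^{e*}\CE) = p^{2e}\ch_2(\CE) - \frac{p^e}{2}\ch_1(\CE) \cdot K_Y + r\chi(\CO_Y).
$$

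Next I would use the elementary inequality $h^0(F^{e*}\CE) \ge \chi(F^{e*}\CE) - h^2(F^{e*}\CE)$, valid because $\chi = h^0 - h^1 + h^2$ and $h^1 \ge 0$. Combining this with the formula above and with the bound supplied by Lemma \ref{h2bound}, namely
$$
h^2(F^{e*}\CE) \le \left(\frac{p^e}{m}+1\right)^2 r A^2 + \left(\frac{p^e}{m}+1\right) r K_Y A + c,
$$
produces an explicit lower bound for $h^0(F^{e*}\CE)$ as a sum of a term quadratic in $p^e$, a term linear in $p^e$, and constants.

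Finally I would divide by $p^{2e}$ and pass to the limit $e \to \infty$. The term $p^{2e}\ch_2(\CE)$ contributes exactly $\ch_2(\CE)$; all terms involving only $p^e$ or constants, whether from Riemann--Roch or from the lower-order terms of the $h^2$ bound, vanish in the limit; and the sole surviving contribution of the $h^2$ estimate is its leading term $\tfrac{p^{2e}}{m^2} r A^2$, which after division leaves precisely $\tfrac{r}{m^2}A^2$. Taking the $\liminf$ then yields
$$
h^0_F(\CE) \ge \ch_2(\CE) - \frac{r}{m^2}A^2,
$$
as claimed.

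As for the difficulty, I do not expect a genuine obstacle here: once Lemma \ref{h2bound} is granted, the argument is essentially a bookkeeping computation in which the quadratic-in-$p^e$ leading term of the $h^2$ bound is exactly what generates the correction $\tfrac{r}{m^2}A^2$, while every other term is asymptotically negligible. The only points deserving a moment of care are checking that $h^1$ may legitimately be discarded (it only strengthens the inequality) and that the Frobenius scaling of the Chern character is applied with the correct powers of $p$ in each degree.
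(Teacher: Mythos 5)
Your proposal is correct and follows essentially the same route as the paper: apply Riemann--Roch to $F^{e*}\CE$, discard $h^1$, control $h^2(F^{e*}\CE)$ via Lemma \ref{h2bound} whose leading term $\tfrac{p^{2e}}{m^2}rA^2$ produces the correction, and divide by $p^{2e}$ before taking the limit. The paper's version is just a more condensed writeup of the identical bookkeeping.
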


\begin{proof}
	The proof is very similar to that of Proposition \ref{h0ch2}. In fact, for $e$ sufficiently large, the Riemann-Roch theorem for $F^{e*}\CE$ tells us that
	$$
	h^0(F^{e*}\CE) + h^2(F^{e*}\CE) \ge p^{2e} \ch_2(\CE) + o(p^{2e}).
	$$
	By Lemma \ref{h2bound}, we know that
	$$
	h^2(F^{e*}\CE) \le \frac{r}{m^2} A^2 p^{2e} + o(p^{2e}).
	$$
	Take the limit as $e \to \infty$, and the proof is completed.
\end{proof}


\section{Proof of Theorem \ref{main}}

Now we are ready to prove Theorem \ref{main}. We will prove the characteristic $p$ part first and then prove Theorem \ref{main} in characteristic zero by the mod $p$ reduction. 

In this section, $f: X \to Y$ is a fibration from a normal $3$-fold $X$ to a smooth surface $Y$ defined over $k$ with general fiber $C$ a smooth curve of genus $g \ge 2$.

\subsection{Theorem \ref{main} in positive characteristics}
Within this subsection, we assume that $\charr k = p > 0$. We first state one more general result.

\begin{theorem} \label{maingeneral}
	Let $L$ be a nef $\QQ$-divisor on $X$ such that $L|_C \ge K_C \ge \rounddown{L|_C}$. Then for any $\QQ$-Cartier Weil divisor $M \le \rounddown{L}$, we have
	$$
	\left(\frac{1}{12} + \frac{1}{6g-6} \right) L^3 \ge h^0_F(f_* \CO_X(M)).
	$$
	Moreover, if $f_* \CO_X(M)$ is locally free and semi-positive, then
	$$
	\left(\frac{1}{12} + \frac{1}{6g-6} \right) L^3 \ge \ch_2(f_* \CO_X(M)).
	$$
\end{theorem}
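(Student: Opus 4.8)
The plan is to establish the first inequality (the one involving $h^0_F$) by a Frobenius base change and then to read off the second inequality directly from Proposition \ref{h0ch2}. First I would reduce to the case $M=\rounddown{L}$: since $M\le\rounddown{L}$ there is an inclusion $\CO_X(M)\hookrightarrow\CO_X(\rounddown{L})$ of rank-one sheaves, and because $f_*$ is left exact while $F^{e*}$ is exact (the absolute Frobenius on the smooth $Y$ being flat by Kunz's theorem), this inclusion is preserved after applying $F^{e*}f_*$; hence $h^0_F(f_*\CO_X(M))\le h^0_F(f_*\CO_X(\rounddown{L}))$ and it suffices to bound the latter. Now form the base change along the $e^{\nth}$ Frobenius $F^e$ of $Y$: write $\sigma_e:X_e:=X\times_{Y,F^e}Y\to X$ for the projection and $f_e:X_e\to Y$ for the induced fibration. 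Flat base change for this Cartesian square gives $F^{e*}f_*\CO_X(\rounddown{L})\cong f_{e*}\,\sigma_e^*\CO_X(\rounddown{L})$, so on global sections
$$h^0\big(F^{e*}f_*\CO_X(\rounddown{L})\big)=h^0\big(X_e,\sigma_e^*\CO_X(\rounddown{L})\big).$$
Since $F^e$ is a homeomorphism of the perfect-field scheme $Y$, the general fibre $C_e$ of $f_e$ maps isomorphically to $C$; in particular it is a smooth curve of genus $g$, and under $\sigma_e|_{C_e}$ the conditions $L|_C\ge K_C\ge\rounddown{L|_C}$ transport to $L_e:=\sigma_e^*L$.

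Next I would pass to a resolution $\tau:W\to X_e$, which exists in dimension three over any field by \cite{Cossart_Piltant_Resolution1,Cossart_Piltant_Resolution2,Cutkosky_Resolution}, set $L_W:=(\sigma_e\tau)^*L$ and $G_W:=(f_e\tau)^*H$ for the very ample $H$ fixed in Section $6$, and apply Theorem \ref{relnoether32} to $W\to Y$. Comparing round-downs under the flat pullback $\sigma_e$ and the birational $\tau$ yields $h^0(X_e,\sigma_e^*\CO_X(\rounddown{L}))\le h^0(W,\CO_W(\rounddown{L_W}))$, which Theorem \ref{relnoether32} bounds by $\left(\tfrac{1}{12}+\tfrac{1}{6g-6}\right)L_W^3$ plus the error term $g\left(\frac{L_W^3}{L_W^2G_W}+1\right)+L_W^2G_W$. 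The decisive point is how these intersection numbers scale: since $\deg\sigma_e=p^{2e}$ we have $L_W^3=p^{2e}L^3$, whereas the identity $f\sigma_e=F^ef_e$ forces $f_e^*H=p^{-e}\sigma_e^*G$ and hence $L_W^2G_W=p^eL^2G$. Thus the main term grows like $p^{2e}$ while the error term grows only like $p^e$.

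Dividing by $p^{2e}$ and letting $e\to\infty$, the error term contributes $O(p^{-e})\to 0$, so by Definition \ref{h0f}
$$h^0_F\big(f_*\CO_X(\rounddown{L})\big)=\liminf_{e\to\infty}\frac{h^0\big(F^{e*}f_*\CO_X(\rounddown{L})\big)}{p^{2e}}\le\left(\frac{1}{12}+\frac{1}{6g-6}\right)L^3,$$
which is the first inequality; the degenerate case $L^2G=0$ (hence $L_W^2G_W=0$) is covered by Theorem \ref{relnoether32}(2), whose bound $g$ is negligible after division by $p^{2e}$. Finally, when $f_*\CO_X(M)$ is locally free and semi-positive on the smooth surface $Y$, Proposition \ref{h0ch2} gives $h^0_F(f_*\CO_X(M))\ge\ch_2(f_*\CO_X(M))$, and chaining this with the first inequality produces the second.

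I expect the main obstacle to be the geometric bookkeeping on $X_e$ rather than the scaling computation, which is clean once it is set up. The base change $X_e$ need not be normal, and may even fail to be reduced, so care is required to ensure that passing to the resolution $W$ only increases $h^0$ and that the pulled-back integral divisor genuinely stays $\le\rounddown{L_W}$; making the comparison of round-downs under $\sigma_e$ and $\tau$ precise, and verifying that $W\to Y$ really inherits all the hypotheses of Theorem \ref{relnoether32} (normality of the source after resolution, smoothness and genus $g\ge 2$ of the general fibre $C_e$, and the boundary conditions on $L_W|_{C_e}$), is where the real work lies.
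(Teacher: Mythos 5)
Your proposal follows the paper's own argument essentially step for step: Frobenius base change $X_e = X\times_{F^e}Y$, flat base change to identify $h^0(F^{e*}f_*\CO_X(M))$ with sections on $X_e$, application of Theorem \ref{relnoether32} upstairs, the scaling $L_e^3=p^{2e}L^3$ versus $L_e^2G_e=p^eL^2G$ so the error term dies after dividing by $p^{2e}$, and Proposition \ref{h0ch2} for the second inequality. The only cosmetic difference is that you pass to a resolution of $X_e$ where the paper passes only to its normalization $X'_e$ (Theorem \ref{relnoether32} is stated for normal $3$-folds, so that suffices), and the bookkeeping issues you flag at the end (integrality of $X_e$, comparison of round-downs, inheritance of the hypotheses by the general fibre) are exactly the ones the paper's inequality (\ref{h0comparison}) and equation (\ref{increasing}) are there to handle.
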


\begin{proof}
	Notice that by Proposition \ref{h0ch2}, the second result is a direct consequence of the first one. Therefore, we only need to prove the first inequality.
	
	Let $F^e: Y \to Y$ be the $e^{\rm th}$ absolute Frobenius morphism of $Y$. Then we have the following diagram:
	$$
	\xymatrix{
		X'_e \ar@{->}[r]^\sigma \ar@{->}[dr]_{f'_e}& X_e \ar@{->}[r]^{F^e_{X/Y}} \ar@{->}[d]^{f_e} & X \ar@{->}[d]^f \\
		& Y \ar@{->}[r]^{F^e} & Y
	}
	$$
	Here $X_e = X \times_{F^e} Y$, $F^e_{X/Y}$ is the $e^{\mathrm{th}}$ relative Frobenius morphism of $X$ over $Y$ and $\sigma: X'_e \to X_e$ denotes the normalization of $X_e$. For simplicity, we still denote by $C$ a general fiber of $f'_e$.
	
	In the following, we denote
	$$
	L_e = F^{e*}_{X/Y} L, \quad L'_e = \sigma^*L_e 
	\quad \mbox{and} \quad M_e = F^{e*}_{X/Y} M.
	$$ 
	Fix a smooth and very ample divisor $H \ge 0$ on $Y$ with $g(H) \ge 2$. Write 
	$$
	G = f^*H, \quad G_e=f^*_eH \quad \mbox{and} \quad G'_e =f'^*_e H.
	$$
	Notice that we always have
	\begin{equation}
	{F^e}^* H = p^eH. \label{frob}
	\end{equation}
	All the above yield 
	\begin{equation}
	L'^3_e = L^3_e = p^{2e} L^3 \quad \mbox{and} 
	\quad L'^2_e G'_e = L^2_eG_e = p^eL^2G. \label{increasing}
	\end{equation}
	On the other hand, by our assumption, $L'_e|_C = L|_C$ for any $e>0$. It implies that $L'_e|_C \ge K_C \ge  \rounddown{L'_e|_C}$. Another important fact is that since $\CO_{X_e}$ is a subsheaf of $\sigma_* \CO_{X'_e}$, by the projection formula, we deduce that
	\begin{equation}
		h^0(\CO_{X_e}(M_e)) \le h^0(\CO_{X_e}(\rounddown{L_e})) \le h^0(\CO_{X'_e}(\rounddown{L'_e})). \label{h0comparison}
	\end{equation}
	
	Now we divide the proof into two cases. First, assume that $L^2G > 0$. By (\ref{increasing}), this implies that $L'^2_e G'_e > 0$ for any $e > 0$. Therefore, by Theorem \ref{relnoether32} (1) for $L'_e$, it follows that
	$$
	h^0(\CO_{X'_e}(\rounddown{L'_e})) \le \left(\frac{1}{12} + \frac{1}{6g-6}\right) L'^3_e + g \left(\frac{L'^3_e}{L'^2_eG'_e} + 1 \right) + L'^2_e G'_e,
	$$
	which implies that
	$$
	\frac{h^0({f_e}_*\CO_{X_e}(M_e))}{p^{2e}} = \frac{h^0(\CO_{X_e}(M_e))}{p^{2e}} \le \frac{h^0(\CO_{X_e}(\rounddown{L_e}))}{p^{2e}} \le \left(\frac{1}{12} + \frac{1}{6g-6}\right) L^3 + o(1)
	$$
	due to (\ref{increasing}) and (\ref{h0comparison}). Second, if $L^2G = 0$, similarly, we know that $L'^2_eG'_e = 0$ for any $e > 0$. By Theorem \ref{relnoether32} (2), 
	$$
	h^0(\CO_{X'_e}(\rounddown{L'_e})) \le g,
	$$
	which implies that
	$$
	h^0({f_e}_*\CO_{X_e}(M_e)) = h^0(\CO_{X_e}(M_e)) \le g
	$$
	again by (\ref{h0comparison}).
	
	Finally, letting $e \to \infty$, we obtain that
	$$
	\left(\frac{1}{12} + \frac{1}{6g-6}\right) L^3 \ge  \liminf_{e \to \infty} \frac{h^0({f_e}_*\CO_{X_e}(M_e))}{p^{2e}} =\liminf_{e \to \infty} \frac{h^0(F^{e*}f_* \CO_X(M))}{p^{2e}}.
	$$
	Thus the proof is completed.
\end{proof}	
	
Now, we can prove Theorem \ref{main} in positive characteristics.

\begin{theorem} [Theorem \ref{main} in positive characteristics] \label{mainp}
	Suppose that $f$ is a relatively minimal fibration of curves of genus $g \ge 2$ over $k$ as in Definition \ref{nfoldfibration}. Then
	$$
	\left(\frac{1}{12} + \frac{1}{6g-6} \right) K^3_{X/Y} \ge h^0_F(f_* \omega_{X/Y}).
	$$
	Moreover, if $f_* \omega_{X/Y}$ is semi-positive, then 
	$$
	\left(\frac{1}{12} + \frac{1}{6g-6} \right) K^3_{X/Y} \ge \ch_2(f_* \omega_{X/Y}).
	$$
\end{theorem}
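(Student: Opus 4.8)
The plan is to deduce Theorem \ref{mainp} from the general statement Theorem \ref{maingeneral} by specializing to the choice $L = M = K_{X/Y}$, so that essentially no new work is required beyond checking hypotheses. First I would record why $K_{X/Y}$ is an admissible input: since $f$ is relatively minimal, $K_{X/Y}$ is nef, and it is a $\QQ$-Cartier Weil divisor (this is implicit in the definition of relative minimality and in the intersection number $K^3_{X/Y}$ being well defined). Because $\omega_{X/Y}$ is a divisorial sheaf, $K_{X/Y}$ has integral coefficients, so $\rounddown{K_{X/Y}} = K_{X/Y}$; taking $M = K_{X/Y}$ therefore satisfies $M \le \rounddown{L}$ trivially, with equality.

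Next I would verify the restriction condition $L|_C \ge K_C \ge \rounddown{L|_C}$. Since $f$ is flat and Cohen--Macaulay, the relative dualizing sheaf is compatible with base change (cf. \cite[Theorem 3.6.1]{Conrad_Duality}), so for a general (smooth) fiber $C$ one has $\omega_{X/Y}|_C \cong \omega_C$, i.e. $K_{X/Y}|_C = K_C$ by adjunction. As $K_C$ is already integral, $L|_C = K_C = \rounddown{L|_C}$ and the required chain of (in)equalities holds with equalities throughout. Moreover $f_* \CO_X(M) = f_* \CO_X(K_{X/Y}) = f_* \omega_{X/Y}$. Feeding these observations into Theorem \ref{maingeneral} gives the first inequality $\left(\frac{1}{12} + \frac{1}{6g-6}\right) K^3_{X/Y} \ge h^0_F(f_* \omega_{X/Y})$ at once.

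For the second inequality I would invoke the positivity comparison between $h^0_F$ and $\ch_2$. Since $Y$ is a smooth surface and $f_* \omega_{X/Y}$ is reflexive on $Y$, it is automatically locally free, so the hypothesis that it is semi-positive makes the ``moreover'' clause of Theorem \ref{maingeneral} directly applicable; alternatively one combines the first inequality with Proposition \ref{h0ch2}, which gives $h^0_F(f_* \omega_{X/Y}) \ge \ch_2(f_* \omega_{X/Y})$ for semi-positive bundles. Either route yields $\left(\frac{1}{12} + \frac{1}{6g-6}\right) K^3_{X/Y} \ge \ch_2(f_* \omega_{X/Y})$.

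I do not expect any genuine obstacle at this stage: the entire analytic content---the double filtration, the Frobenius base change, and the limiting argument---has already been carried out in the proof of Theorem \ref{maingeneral}. The only points that demand a moment's care are the two structural facts used above, namely the adjunction identity $\omega_{X/Y}|_C \cong \omega_C$ on a general fiber (resting on base-change compatibility of the relative dualizing sheaf) and the local freeness of the reflexive sheaf $f_* \omega_{X/Y}$ over the smooth surface $Y$. Both are standard, so I expect the deduction to be short and essentially formal.
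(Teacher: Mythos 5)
Your proposal is correct and is exactly the paper's own argument: the paper's proof of Theorem \ref{mainp} is the one-line specialization $L = M = K_{X/Y}$ in Theorem \ref{maingeneral}, with the hypothesis checks you spell out (nefness from relative minimality, $K_{X/Y}|_C = K_C$, local freeness of $f_*\omega_{X/Y}$) left implicit. No further comment is needed.
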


\begin{proof}
	We only need to replace both $L$ and $M$ in Theorem \ref{maingeneral} by $K_{X/Y}$ to get these two inequalities here.
\end{proof}

\subsection{Theorem \ref{main} in characteristic zero}
In this subsection, let $f$ be a relatively minimal fibration of curves of genus $g \ge 2$ defined over $k$ of characteristic zero as in Definition \ref{nfoldfibration}.

We first prove a weaker theorem.

\begin{theorem} \label{weakmain}
	Let $A \ge 0$ be an ample $\QQ$-divisor on $X$ such that 
	$\deg(A|_C) < 1$. Then
	$$
	\left(\frac{1}{12} + \frac{1}{6g-6}\right)(K_{X/Y} + A)^3 \ge \ch_2(f_* \omega_{X/Y}).
	$$
\end{theorem}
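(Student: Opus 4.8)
The plan is to derive this characteristic-zero inequality from the positive-characteristic Theorem \ref{maingeneral} by reduction modulo $p$, the point being that the ample perturbation $A$ is exactly what survives the reduction. First I would spread everything out: choose a finite-type integral scheme $\CZ$ over $\spec \ZZ$ and a model $\CX \to \CY \to \CZ$ with generic fibre $f: X \to Y$, together with relative $\QQ$-divisors $K_{\CX/\CY}$ and $\CA$ restricting to $K_{X/Y}$ and $A$. After shrinking $\CZ$ I may assume that for every closed point $z \in \CZ$ the map $f_z: \CX_z \to \CY_z$ is again a relatively minimal fibration of curves of genus $g$ as in Definition \ref{nfoldfibration}, that $\CO_{\CX_z}(K_{\CX_z/\CY_z}) = \omega_{\CX_z/\CY_z}$, that the formation of $f_*\omega_{X/Y}$ commutes with this base change (so $f_{z*}\omega_{\CX_z/\CY_z} = (f_*\omega_{X/Y})_z$), and that the relevant intersection numbers and Chern characters on $\CX_z$ and $\CY_z$ equal those of the generic fibre.

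Since $K_{X/Y}$ is nef and $A$ is ample, $K_{X/Y}+A$ is ample; this is the key numerical input that nefness of $K_{X/Y}$ alone could not provide. By \cite[Chap. III, Theorem 4.7.1]{Grothendieck_EGA3} ampleness spreads out, so after further shrinking $\CZ$ the divisor $L_z := K_{\CX_z/\CY_z}+\CA_z$ is ample, in particular nef, on each $\CX_z$. The fibre conditions persist as well: on a general fibre $C$ one has $L_z|_C = K_C + A|_C \ge K_C$, while $0 \le \deg(A|_C) < 1$ forces $\rounddown{A|_C} = 0$ and hence $\rounddown{L_z|_C} = K_C$, giving $L_z|_C \ge K_C \ge \rounddown{L_z|_C}$. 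Taking $M = K_{\CX_z/\CY_z} \le \rounddown{L_z}$, Theorem \ref{maingeneral} applies on each $\CX_z$ and yields
$$
\left(\frac{1}{12}+\frac{1}{6g-6}\right) L_z^3 \ge h^0_F\bigl(f_{z*}\omega_{\CX_z/\CY_z}\bigr).
$$

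It remains to compare $h^0_F$ with $\ch_2$, and here characteristic zero enters through positivity: $\CE := f_*\omega_{X/Y}$ is semi-positive, so $H_\CE$ is nef. Fix a very ample divisor $A'$ on $Y$; since $H_\CE$ is nef and $\pi_\CE$-ample, $H_\CE + \frac{1}{m}\pi_\CE^* A'$ is ample in characteristic zero for every $m \ge 1$. Spreading out this ampleness once more, for almost all $z$ its reduction is nef, so Proposition \ref{h0ch2weak} applied to $\CE_z = f_{z*}\omega_{\CX_z/\CY_z}$ (of rank $g$) gives $h^0_F(\CE_z) \ge \ch_2(\CE_z) - \frac{g}{m^2}(A')^2$. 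Combining this with the previous display and using that $L_z^3 = (K_{X/Y}+A)^3$, $\ch_2(\CE_z) = \ch_2(\CE)$ and $(A')^2$ are all unchanged under the reduction, I obtain, for a suitable $z$ of positive characteristic,
$$
\left(\frac{1}{12}+\frac{1}{6g-6}\right)(K_{X/Y}+A)^3 \ge \ch_2(f_*\omega_{X/Y}) - \frac{g}{m^2}(A')^2.
$$
Letting $m \to \infty$ removes the error term and proves the theorem.

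The main obstacle is conceptual rather than computational, and is the one highlighted in the introduction: one cannot spread out the nefness of $K_{X/Y}$ (nor the semi-positivity of $f_*\omega_{X/Y}$) directly, so it is essential to perturb into the ample cone---by $A$ on $X$ and by $\frac{1}{m}\pi_\CE^* A'$ on $\PP(\CE)$---before reducing modulo $p$, and then let the perturbation tend to zero afterwards. The technical care then lies in arranging that all the finitely many ``almost all $z$'' conditions (good reduction of the fibration, preservation of the fibre degree conditions, base-change compatibility of $f_*\omega_{X/Y}$, and the two spreadings-out of ampleness) can be met simultaneously at a single closed point of positive characteristic for each fixed $m$.
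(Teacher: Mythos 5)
Your proposal is correct and follows essentially the same route as the paper: spread out an integral model over a finite-type $\ZZ$-scheme, use \cite[Chap.~III, Theorem 4.7.1]{Grothendieck_EGA3} to preserve the ampleness of $K_{X/Y}+A$ and of $H_{\CE}+\frac{1}{m}\pi_{\CE}^*A'$ under reduction, apply Theorem \ref{maingeneral} and Proposition \ref{h0ch2weak} at a closed point $z$, and let $m \to \infty$. The one slip is your opening claim that after shrinking $\CZ$ each $f_z$ may be assumed \emph{relatively minimal} --- that is exactly the nefness of $K_{\CX_z/\CY_z}$ which cannot be spread out (as you yourself stress later, citing the need to perturb into the ample cone) --- but since Theorem \ref{maingeneral} only requires the nef divisor $L_z = K_{\CX_z/\CY_z}+\CA_z$, which is ample by the EGA argument, and never the nefness of $K_{\CX_z/\CY_z}$ itself, nothing in your argument actually depends on that assumption.
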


\begin{proof}
	By Lefschetz principle, we can assume that $k$ is a finite extension over $\QQ$. Let $\CZ$ be a scheme of finite type over $\ZZ$ with the function field $k$. Let $\CX \to \CY \to \CZ$ be a projective and flat morphism extending $X \to Y \to \spec k$. Replacing $\mathcal{Z}$ by its Zariski open set and taking a finite and \'etale cover, we may assume that the generic fiber $X \to Y \to \spec k$ has an integral model $\mathcal{X} \to \mathcal{Y} \to \mathcal{Z}$ such that 
	\begin{itemize}
		\item [(i)] $\CY \to \CZ$ is a proper and smooth morphism;
		\item [(ii)] $f: \CX \to \CY$ is a proper, flat and Cohen-Macaulay morphism of pure relative dimension one;
		\item [(iii)] The divisor $A$ extends to a $\QQ$-Cartier divisor $\CA$ on $\CX$.
	\end{itemize}
	By \cite[Theorem 3.5.1, 3.6.1]{Conrad_Duality} again, the relative dualizing sheaf $\omega_{\CX/\CY}$ is well-defined under this setting. It is $\CY$-flat and compatible with base changes. Therefore, we may assume that $f_*\omega_{\CX/\CY}$ is also locally free. In particular, for any closed point $z \in \CZ$, we have
	$$
	\ch_2({f_z}_* \omega_{\CX_z / \CY_z}) = \ch_2(f_* \omega_{X/Y}).
	$$
	On the other hand, since both $A$ and $K_{X/Y}+A$ are ample, according to \cite[Chap. III, Theorem 4.7.1]{Grothendieck_EGA3}, by further shrinking $\CZ$ if necessary, we may assume that 
	\begin{itemize}
		\item [(iv)] both $\CA_z$ and $K_{\CX_z/\CY_z}+\CA_z$ are ample $\QQ$-divisors on $\CX_z$ for any closed point $z \in \CZ$.
	\end{itemize}
	Moreover, if $C_z$ denotes a general fiber of $f_z: \CX_z \to \CY_z$, then $\deg(\CA_z|_{C_z}) < 1$. In particular, $\rounddown{\CA_z|_{C_z}} = 0$. Hence 
	$$
	K_{\CX_z/\CY_z}|_{C_z}+\CA_z|_{C_z} \ge K_{C_z} = \rounddown{K_{\CX_z/\CY_z}|_{C_z}+\CA_z|_{C_z}}.
	$$
	
	Recall that there is the projection map $\pi: = \pi_{f_* \omega_{X/Y}}: \mathbb{P}(f_*{\omega_{X/Y}}) \to Y$ and a Cartier divisor $H:=H_{f_* \omega_{X/Y}}$ associated to $\CO_{f_* \omega_{X/Y}}(1)$. Then both $\pi$ and $H$ extend to a universal projection $\pi: \PP(f_* \omega_{\CX/\CY}) \to \CY$ over $\CY$ and a Cartier divisor $\CH$ associated to $\CO_{\PP(f_* \omega_{\CX/\CY})}(1)$. 
	
	Pick a very ample divisor $A' \ge 0$ on $Y$. Since $f_* \omega_{X/Y}$ is semi-positive \cite{Viehweg_Weak_Positivity}, we know that $H$ is nef. In particular, for any $m > 0$, the $\QQ$-divisor $H+\frac{1}{m}\pi^*A'$ on $\PP(f_* \omega_{X/Y})$ is ample. Fix an $m > 0$, by \cite[Chap. III, Theorem 4.7.1]{Grothendieck_EGA3} again, we may shrink $\CZ$ once more such that 
	\begin{itemize}
		\item [(v)] $A'$ extends to a Cartier divisor $\CA'$ on $\CY$;
		\item [(vi)] for any closed point $z \in \CZ$, $\CA'_z$ is very ample on $\CY_z$ and $\CH_z + \frac{1}{m} \pi_z^* \CA'_z$ is an ample $\QQ$-divisor on $\PP({f_z}_* \omega_{\CX_z/\CY_z})$.
	\end{itemize}
	
	From now on, we assume that (i)--(vi) hold. Take any closed point $z \in \CZ$. Then we get a fibration $f_z: \CX_z \to \CY_z$ of curves of genus $g \ge 2$ over the residue field of $z$ of positive characteristic as in Definition \ref{nfoldfibration}, not necessarily relatively minimal. Let $F: \CY_z \to \CY_z$ be the absolute Frobenius morphism of $\CY_z$. By Theorem \ref{maingeneral}, we know that 
	$$
	\left(\frac{1}{12} + \frac{1}{6g-6} \right)(K_{\CX_z / \CY_z}+\CA_z)^3 \ge h^0_F({f_z}_* \omega_{\CX_z/\CY_z}).
	$$
	On the other hand, as the rank of ${f_z}_*\omega_{\CX_z/\CY_z}$ is $g$, by Proposition \ref{h0ch2weak}, 
	$$
	h^0_F({f_z}_* \omega_{\CX_z/\CY_z}) \ge \ch_2 ({f_z}_*\omega_{\CX_z/\CY_z}) - \frac{g}{m^2} \CA'^2_z.
	$$
	It follows that
	$$
	\left(\frac{1}{12} + \frac{1}{6g-6} \right) (K_{\CX_z / \CY_z}+\CA_z)^3 \ge \ch_2 ({f_z}_* \omega_{\CX_z/\CY_z}) - \frac{g}{m^2} \CA'^2_z,
	$$
	which implies that for $f: X \to Y$, we have
	$$
	\left(\frac{1}{12} + \frac{1}{6g-6} \right) (K_{X/Y} + A)^3 \ge \ch_2(f_* \omega_{X/Y}) - \frac{g}{m^2} A'^2.
	$$
	Notice that $m$ is arbitrary. Thus the proof is completed.
\end{proof}

Now we can finish the whole proof of Theorem \ref{main}.
\begin{proof} [Proof of Theorem \ref{main} in characteristic zero]
	Choose an ample divisor $A \ge 0$ on $X$. Then for any integer $n > \deg(A|_C)$, we have $\deg \left( \left. \left( \frac{1}{n} A \right) \right|_C \right) < 1$. Therefore, for any such $n$, by Theorem \ref{weakmain}, we have
	$$
	\left(\frac{1}{12} + \frac{1}{6g-6} \right) \left(K_{X/Y} + \frac{1}{n} A\right)^3 \ge \ch_2(f_*\omega_{X/Y}).
	$$
	Thus the proof is completed by letting $n \to \infty$.
\end{proof}

\subsection{An example}
In this subsection, we construct a $3$-fold fibration of curves of genus $g \ge 2$ which indicates that the inequality in Theorem \ref{main} is very close to be optimal.

Let $\pi: X \to \PP^1 \times \PP^2$ be a double cover defined over $k$ branched along a smooth and even divisor $B \equiv 2L$, where $\CO_X(L) = p^*_1 \CO_{\PP^1}(g+1) \otimes p^*_2 \CO_{\PP^2}(1)$. Here $p_1$ and $p_2$ are the canonical projections. That is, we have the following diagram:
$$
\xymatrix{
	X \ar[rr]^\pi \ar[rrd]_f & & \PP^1 \times \PP^2  \ar[d]^{p_2} \ar[rr] ^{p_1} & & \PP^1 \\
	& & \PP^2 
	}
$$
It is easy to see that $X$ is smooth and $f: X \to \PP^2$ is a fibration of curves of genus $g$ as the general fiber of $f$ is a double cover over $\PP^1$ ramified at $2g+2$ distinct points.

By the Riemann-Hurwitz formula,
$$
\omega_X = \pi^* \left(\omega_{\PP^1 \times \PP^2} \otimes \CO_{\PP^1 \times \PP^2}(L)\right) = \pi^* \left( p_1^* \CO_{\PP^1}(g-1) \otimes p_2^* \CO_{\PP^2} (-2)\right).
$$
Therefore, we deduce that
$$
\omega_{X/{\PP^2}} = \omega_X \otimes f^* \CO_{\PP^2}(3) = \pi^*\left( p^*_1 \CO_{\PP^1}(g-1)  \otimes p_2^* \CO_{\PP^2}(1) \right).
$$
In particular, $K_{X/{\PP^2}}$ is ample, which implies that $f$ is relatively minimal.

\begin{prop} \label{main=}
	Let $f: X \to \PP^2$ be the above fibration. Then
	$$
	K^3_{X/{\PP^2}} = \frac{12g-12}{g} \ch_2(f_*\omega_{X/{\PP^2}}) = 6g-6.
	$$
\end{prop}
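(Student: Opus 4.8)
The plan is to reduce everything to intersection theory on $W := \PP^1 \times \PP^2$ together with one pushforward computation for the double cover $\pi$. Write $h_1 = p_1^* \CO_{\PP^1}(1)$ and $h_2 = p_2^* \CO_{\PP^2}(1)$ for the generators of the Picard group of $W$; they satisfy $h_1^2 = 0$, $h_2^3 = 0$ and $h_1 h_2^2 = 1$. In this notation $L \equiv (g+1)h_1 + h_2$, and the computation already carried out in the text gives $\omega_{X/\PP^2} = \pi^* \CO_W(D)$ with $D \equiv (g-1) h_1 + h_2$. Both quantities in the proposition will be extracted from $D$ and $L$.

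For the self-intersection, since $\pi$ is finite of degree $2$ and $K_{X/\PP^2} = \pi^* D$, I would use $K^3_{X/\PP^2} = (\pi^* D)^3 = 2 D^3$. Expanding $D^3 = ((g-1) h_1 + h_2)^3$ and discarding every term killed by $h_1^2 = 0$ or $h_2^3 = 0$ leaves only $3(g-1) h_1 h_2^2 = 3(g-1)$, whence $K^3_{X/\PP^2} = 6g - 6$.

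For the Chern character I would first compute the pushforward. Since $\pi$ is the double cover branched along $B \equiv 2L$, one has $\pi_* \CO_X = \CO_W \oplus \CO_W(-L)$, so the projection formula gives $\pi_* \omega_{X/\PP^2} = \CO_W(D) \oplus \CO_W(D - L)$. Here $D - L \equiv -2 h_1 = p_1^* \CO_{\PP^1}(-2)$. Pushing forward along $p_2 \colon W \to \PP^2$ (whose fibers are the $\PP^1$ factors) I would use cohomology and base change: the first summand restricts to $\CO_{\PP^1}(g-1)$ on each fiber and yields ${p_2}_* \CO_W(D) = \CO_{\PP^2}(1)^{\oplus g}$ because $h^0(\CO_{\PP^1}(g-1)) = g$, while the second summand restricts to $\CO_{\PP^1}(-2)$ and contributes nothing. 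Hence $f_* \omega_{X/\PP^2} = \CO_{\PP^2}(1)^{\oplus g}$. Since $\ch_2(\CO_{\PP^2}(1)) = \tfrac{1}{2} h_2^2 = \tfrac{1}{2}$, additivity of the Chern character on direct sums gives $\ch_2(f_* \omega_{X/\PP^2}) = g/2$. Finally $\tfrac{12g-12}{g} \cdot \tfrac{g}{2} = 6g-6 = K^3_{X/\PP^2}$, which is the claimed identity.

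The routine part is the intersection arithmetic; the one step requiring care is the pushforward $f_* \omega_{X/\PP^2}$, where I must correctly invoke the splitting $\pi_* \CO_X = \CO_W \oplus \CO_W(-L)$ and then verify via base change that the twisted summand $\CO_W(D-L)$ has vanishing $p_2$-pushforward, so that the rank-$g$ bundle $f_*\omega_{X/\PP^2}$ is exactly $\CO_{\PP^2}(1)^{\oplus g}$ rather than carrying an unexpected extra factor.
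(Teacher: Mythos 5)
Your proposal is correct and follows essentially the same route as the paper: compute $K^3_{X/\PP^2}$ by pulling back along the degree-$2$ cover, identify $f_*\omega_{X/\PP^2} = \CO_{\PP^2}(1)^{\oplus g}$ via the projection formula, and read off $\ch_2 = g/2$. If anything, your treatment of the pushforward is slightly more careful than the paper's, since you make the splitting $\pi_*\CO_X = \CO_W \oplus \CO_W(-L)$ explicit and verify that the twisted summand $\CO_W(D-L) \cong p_1^*\CO_{\PP^1}(-2)$ has vanishing $p_2$-pushforward, a step the paper's displayed computation elides.
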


\begin{proof}
	This is a direct computation. It is easy to see that
	$$
	c_1^3\left(p^*_1 \CO_{\PP^1}(g-1)  \otimes p_2^* \CO_{\PP^2}(1)\right) = 3g-3.
	$$
	By the above formula of $\omega_{X/{\PP^2}}$, we obtain that
	$$
	K^3_{X/{\PP^2}} = (\deg \pi) c_1^3\left(p^*_1 \CO_{\PP^1}(g-1) \right) = 6g-6.
	$$
	
	On the other hand, by the above commutative diagram and the projection formula, we have
	\begin{eqnarray*}
		f_* \omega_{X/{\PP^2}} & = & f_* \left( \pi^*\left( p^*_1 \CO_{\PP^1}(g-1)  \otimes p_2^* \CO_{\PP^2}(1) \right) \right) \\ 
		& = & f_* \left(\pi^*p_1^* \CO_{\PP_1}(g-1) \otimes f^* \CO_{\PP^2}(1)\right) \\
		& = & p_{2*} p_1^* \CO_{\PP^1}(g-1) \otimes \CO_{\PP^2}(1) \\
		& = & \left(\bigoplus_{i=1}^g \CO_{\PP^2} \right) \otimes \CO_{\PP^2}(1) \\
		& = & \bigoplus_{i=1}^g \CO_{\PP^2}(1).
	\end{eqnarray*}
	From this, we deduce that
	$$
	c_1^2 (f_* \omega_{X/{\PP^2}}) = g^2 \quad \mbox{and} \quad c_2(f_* \omega_{X/{\PP^2}}) = \frac{g(g-1)}{2}.
	$$
	Therefore, we have
	$$
	\ch_2(f_* \omega_{X/{\PP^2}}) = \frac{1}{2}c_1^2 (f_* \omega_{X/{\PP^2}}) - c_2(f_* \omega_{X/{\PP^2}}) = \frac{g}{2}.
	$$
	Hence the proof is completed.
\end{proof}

Furthermore, if $\charr k = p>0$, we can compute the invariant $h^0_F(f_* \omega_{X/{\PP^2}})$ in this case.

\begin{prop} \label{h0=ch2}
	Let $f: X \to \PP^2$ be the above fibration defined over $k$ of characteristic $p > 0$. Then
	$$
	h^0_F(f_* \omega_{X/{\PP^2}}) = \ch_2(f_* \omega_{X/{\PP^2}}) = \frac{g}{2}.
	$$
\end{prop}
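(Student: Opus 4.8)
The plan is to leverage the explicit computation of $f_* \omega_{X/{\PP^2}}$ already carried out in the proof of Proposition \ref{main=}, which identifies this bundle as a direct sum of ample line bundles on $\PP^2$. Once the bundle is in this form, evaluating $h^0_F$ reduces to a direct application of Proposition \ref{h0fl}, since each summand is ample, and the matching with $\ch_2$ is then automatic.

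First I would recall from the proof of Proposition \ref{main=} the identification
$$
f_* \omega_{X/{\PP^2}} = \bigoplus_{i=1}^{g} \CO_{\PP^2}(1).
$$
Because the Frobenius pullback commutes with direct sums and taking global sections is additive over direct sums, for every $e>0$ we have
$$
h^0\!\left(F^{e*} f_* \omega_{X/{\PP^2}}\right) = \sum_{i=1}^{g} h^0\!\left(F^{e*}\CO_{\PP^2}(1)\right).
$$
Next, since $\CO_{\PP^2}(1)$ is ample, Proposition \ref{h0fl} applies to each summand and gives
$$
\lim_{e \to \infty} \frac{h^0\!\left(F^{e*}\CO_{\PP^2}(1)\right)}{p^{2e}} = \ch_2\!\left(\CO_{\PP^2}(1)\right) = \frac{1}{2}.
$$
As each of the $g$ summands contributes a genuine limit, the $\liminf$ in Definition \ref{h0f} is in fact a limit and distributes over the finite sum, whence
$$
h^0_F\!\left(f_* \omega_{X/{\PP^2}}\right) = g \cdot \frac{1}{2} = \frac{g}{2}.
$$
Comparing with the value $\ch_2(f_* \omega_{X/{\PP^2}}) = g/2$ already computed in Proposition \ref{main=} finishes the argument.

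I do not expect any serious obstacle here; the statement is essentially a corollary of the two preceding results. The only point demanding a moment of care is that $h^0_F$ is defined through a $\liminf$ rather than a limit, so one should explicitly note that for a direct sum of \emph{ample} line bundles each term converges by Proposition \ref{h0fl}, which upgrades the $\liminf$ to an honest limit that splits over the direct summands. Everything else is a routine invocation of Propositions \ref{main=} and \ref{h0fl}.
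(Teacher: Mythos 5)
Your proposal is correct and follows essentially the same route as the paper: both start from the identification $f_* \omega_{X/{\PP^2}} = \bigoplus_{i=1}^{g} \CO_{\PP^2}(1)$ established in Proposition \ref{main=} and then compute the Frobenius limit termwise. The only cosmetic difference is that the paper evaluates $h^0\bigl(\bigoplus_{i=1}^{g}\CO_{\PP^2}(p^e)\bigr) = \frac{(p^e+2)(p^e+1)g}{2}$ explicitly rather than invoking Proposition \ref{h0fl}, and your remark that the termwise limits upgrade the $\liminf$ to a limit is a correct and worthwhile point of care.
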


\begin{proof}
	From the proof of Proposition \ref{main=}, we know that
	$$
	f_* \omega_{X/{\PP^2}} = \bigoplus_{i=1}^g \CO_{\PP^2}(1).
	$$
	Let $F^e$ be the $e^{\mathrm{th}}$ absolute Frobenius morphism of $\PP^2$ over $k$. Then
	$$
	h^0(F^{e*}f_* \omega_{X/{\PP^2}}) = h^0 \left(\bigoplus_{i=1}^g \CO_{\PP^2}(p^e)\right) = \frac{(p^e+2)(p^e+1) g}{2}.
	$$
	Thus by Definition \ref{h0f}, we obtain
	$$
	h^0_F(F^{e*}f_* \omega_{X/{\PP^2}}) = \liminf_{e \to \infty} \frac{h^0_F(F^{e*}f_* \omega_{X/{\PP^2}})}{p^{2e}} = \frac{g}{2}.
	$$
	Therefore, the proof is completed by the result in Proposition \ref{main=} about $\ch_2(f_* \omega_{X/{\PP^2}})$.
\end{proof}

In fact, this proposition also indicates that the inequality in Proposition \ref{h0ch2} is sharp.

\section{Slope inequality (\ref{surfaceslope}) revisited}

In the last section, we illustrate how the method (in fact, a simpler version) introduced in this paper gives a new proof of the slope inequality (\ref{surfaceslope}) of Cornalba-Harris and Xiao in arbitrary characteristic.\footnote{The author was asked by Professor Xiaotao Sun about a proof of  (\ref{surfaceslope}) using characteristic $p > 0$ method in 2013.}

Let $f: X \to Y$ be a fibration from a smooth surface $X$ to a smooth curve $Y$ defined over $k$ with general fiber $C$ a smooth curve of genus $g \ge 2$. We observe that it suffices to prove the following result which is a surface version of Theorem \ref{maingeneral}:

\begin{theorem} \label{mainsurface}
	Let $L$ be a nef $\QQ$-divisor on $X$ defined over $k$ of characteristic $p>0$ such that $L|_C \ge K_C \ge  \rounddown{L|_C}$. Then for any divisor $M \le \rounddown{L}$, we have
	$$
	\left(\frac{1}{4} + \frac{1}{4g-4}\right) L^2 \ge h^0_F(f_* \CO_X(M)).
	$$
\end{theorem}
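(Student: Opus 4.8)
The plan is to run, one dimension down, the same characteristic $p$ strategy used for Theorem \ref{maingeneral}, so that the filtration estimates of Section 3 play the role that Sections 4--6 play there. First I would isolate a surface analogue of Theorem \ref{relnoether32}: for every nef $\QQ$-divisor $L$ with $L|_C \ge K_C \ge \rounddown{L|_C}$,
$$
h^0(\CO_X(\rounddown{L})) \le \left(\frac14 + \frac{1}{4g-4}\right)L^2 + C,
$$
with $C$ a constant depending only on $g$. Granting this, Theorem \ref{mainsurface} follows verbatim as Theorem \ref{maingeneral} did: for the $e^{\nth}$ relative Frobenius I set $X_e = X\times_{F^e}Y$ with normalization $X'_e$, put $L'_e = \sigma^*F^{e*}_{X/Y}L$ and $M_e = F^{e*}_{X/Y}M$, use $L'_e|_C = L|_C$ together with $h^0(\CO_{X_e}(M_e)) \le h^0(\CO_{X'_e}(\rounddown{L'_e}))$, apply the displayed estimate to $L'_e$, and let $e\to\infty$. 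Because $L'^2_e = p^eL^2$ while $C$ does not depend on $e$, dividing by $p^e$ kills the constant and the liminf yields exactly the bound on $h^0_F(f_*\CO_X(M))$.

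After passing to a resolution I would write $\rounddown{L} = M + Z$ with $|M|$ base point free, $Z\ge 0$ and $h^0(M) = h^0(\CO_X(\rounddown{L}))$, so that $M|_C \le K_C$, and run the filtration of Theorem \ref{filtration2} for $M$. Combining Proposition \ref{numerical2} with Clifford's inequality gives the estimates of Theorem \ref{surface}. When $C$ is non-hyperelliptic and $0 \ne M|_C < K_C$, part (4) reads $h^0(M) \le \frac14 M^2 + \frac12 a_0 + \frac14 a_N + \frac12 MF$; since $\frac12 a_0 + \frac14 a_N \le \frac12\sum_i a_i$, Proposition \ref{pl} applied with the auxiliary divisor $P = L$ (legitimate since $P|_C = L|_C \ge K_C$) bounds $\sum_i a_i$ by $\frac{LM}{2g-2}+1$, and with $M^2 \le LM \le L^2$ and $MF \le 2g-2$ this is precisely the desired inequality.

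The essential obstacle is the hyperelliptic fiber, where only the weaker Clifford bound holds and part (3) of Theorem \ref{surface} instead yields a surviving term $a_0 + \frac12\sum_{i\ge 1}a_i = \frac12 a_0 + \frac12\sum_i a_i$. The extra $\frac12 a_0$ is genuinely dangerous: since $a_0$ is comparable to $M^2/\deg(M|_C)$ and $\deg(M|_C)$ is bounded by $2g-2$, this term is of the same order as the main term $L^2$ and would corrupt the constant. To absorb it I would exploit the horizontal part of $L - M$: with $\delta_0 = \deg((L-M)|_C) = \deg(L|_C) - \deg(M|_C)$, the nefness of $M - (a_0-1)F$ gives $M\cdot(L-M)^H \ge (a_0-1)\delta_0$ and hence $M^2 \le LM - (a_0-1)\delta_0$. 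Substituting this into part (3) converts the bad term into $(\tfrac12 - \tfrac{\delta_0}{4})a_0 + O(1)$, which is harmless once $\delta_0 \ge 2$; and in the hyperelliptic case $\deg(M|_C)$ is even, so if $M|_C \ne K_C$ then $\deg(M|_C) \le 2g-4$ and $\delta_0 \ge (2g-2)-(2g-4) = 2$, exactly as required.

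Finally the two extreme fiber degrees are treated apart. If $\deg(M|_C) = 0$ then $M = f^*D$ for a divisor $D$ on the curve $Y$, and a direct estimate gives $h^0(M) = h^0(D) \le \frac{L^2}{2g-2}+1$; if $M|_C = K_C$ I would invoke the relative Clifford bound \cite[Theorem 1.2]{YuanZhang_RelNoether}, the same input used at (\ref{case2=2g-2}) in the proof of Theorem \ref{3fold}, to obtain $h^0(M) \le (\frac14 + \frac{1}{4g-4})M^2 + g$. Assembling these cases proves the rough estimate with a constant depending only on $g$, whereupon the Frobenius limit finishes the proof. I expect the delicate part to be entirely in the hyperelliptic bookkeeping — confirming that the horizontal base-locus degree $\delta_0$ reaches $2$ exactly when it is needed and that every remaining error term stays bounded independently of $e$.
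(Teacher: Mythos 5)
Your proposal is correct and follows essentially the same route as the paper: reduce to a rough estimate $h^0(M)\le(\frac14+\frac1{4g-4})L^2+O(1)$ via the filtration of Theorem \ref{filtration2}, split into the cases $M|_C=0$, $M|_C=K_C$, and $0\ne M|_C<K_C$ (hyperelliptic vs.\ not), absorb the extra $\tfrac12 a_0$ in the hyperelliptic case using $(L-M)C\ge 2$ from the evenness of $\deg(M|_C)$, and then kill the error term by the Frobenius base-change limit. The only cosmetic difference is that the paper's error term is $\frac{LC+2}{2}$ rather than a constant depending only on $g$, but as you note it is independent of $e$, which is all the limiting argument needs.
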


With this result, to prove (\ref{surfaceslope}) in characteristic $p > 0$, we only need to replace both $L$ and $M$ by $K_{X/Y}$ and apply Remark \ref{h0ch1}. When $\charr k = 0$, using the same argument as in the proof of Theorem \ref{weakmain}, we can prove that 
$$
\left(\frac{1}{4} + \frac{1}{4g-4}\right) (K_{X/Y} + A)^2 \ge \ch_1(f_* \omega_{X/Y}) = \deg f_* \omega_{X/Y}
$$
for any ample $\QQ$-divisor $A$ on $X$ with $\deg(A|_C) < 1$, provided that $f$ is relatively minimal. Notice that the corresponding argument is much simpler in the current case. In particular, we do not need to perturb $f_*\omega_{X/Y}$ by ample $\QQ$-divisors, because Remark \ref{h0ch1} is a positivity-free result. Then (\ref{surfaceslope}) in this case follows from a limiting argument on $A$ similar to the proof of Theorem \ref{main}. We leave the proof to the interested reader.

In the following, we prove Theorem \ref{mainsurface}.

\begin{proof}[Proof of Theorem \ref{mainsurface}]
	In fact, it is enough to prove that
	\begin{equation} \label{weakmainsurface}
		h^0(M) \le \left(\frac{1}{4} + \frac{1}{4g-4}\right) L^2 + \frac{LC+2}{2}.
	\end{equation}
	Once this is verified, we will obtain Theorem \ref{mainsurface} using the Frobenius limiting trick presented in the proof of Theorem \ref{maingeneral}. Again, the argument in the surface case is dramatically simpler as the error terms remain the same when applying the Frobenius limiting trick and we do not need to choose the very ample divisor $H$ on $Y$.
	
	Since $LC \ge 2g-2$, we may assume that $h^0(M) \ge 2$ in (\ref{weakmainsurface}). Replacing $X$ by an appropriate blowing up, we may further assume that $|M|$ is base point free and $M|_C \le K_C$. If $M|_C = 0$, then $M \sim_{\mathrm{num}} rC$ for some $r \ge h^0(M) - 1$. Thus
	$$
	h^0(M) \le r + 1 \le \frac{LM}{LC} + 1 \le \frac{1}{2g-2}L^2 + 1.
	$$
	Therefore, (\ref{weakmainsurface}) is verified in this case as
	$$
	\frac{1}{2g-2} \le \frac{1}{4} + \frac{1}{4g-4}
	$$
	when $g \ge 2$. On the other hand, if $M|_C = K_C$, by \cite[Theorem 1.2]{YuanZhang_RelNoether}, we have
	$$
	h^0(M) \le \left(\frac{1}{4} + \frac{1}{4g-4}\right) M^2 + g \le \left(\frac{1}{4} + \frac{1}{4g-4}\right) L^2 + \frac{LC + 2}{2}.
	$$
	
	In the following, we assume that $0 \ne M|_C < K_C$. By Theorem \ref{filtration2}, we obtain triples
	$$
	\{(M_i, Z_i, a_i) | i = 0, \ldots, N \}
	$$
	on $X$ which satisfy the conditions therein. If $C$ is hyperelliptic, by Theorem \ref{surface} (3), 
	$$
	h^0(M) - \frac{1}{4} M^2 \le a_0 + \frac{1}{2} \sum_{i=1}^{N} a_i + \frac{MC}{2}.
	$$
	On the other hand, notice that $\deg(M|_C)$ is even, which gives $(L-M)C \ge 2$. Since $M':=M-(a_0-1)C$ is also nef, we obtain that
	$$
	(L-M)M \ge (a_0-1)(L-M)C + (L-M)M' \ge 2a_0 + (L-M)C.
	$$
	Combine the above two inequalities and apply Proposition \ref{pl}. It follows that
	\begin{eqnarray*}
		h^0(M) - \frac{1}{4} LM & \le & h^0(M) - \frac{1}{4} M^2 - \frac{1}{4}(L-M)M \\
		& \le & \frac{1}{2}  \sum_{i=0}^{N} a_i + \frac{MC}{2} + \frac{(L-M)C}{4} \\
		& \le & \frac{1}{4g-4} LM + \frac{LC+2}{2}.
	\end{eqnarray*}
	If $C$ is non-hyperelliptic, just by Theorem \ref{surface} (4) and Proposition \ref{pl}, we deduce that
	$$
	h^0(M) - \frac{1}{4} M^2\le \frac{1}{2} \sum_{i=0}^{N} a_i + \frac{MC}{2} \le \frac{1}{4g-4} LM + \frac{LC + 2}{2}.
	$$
	As a result, (\ref{weakmainsurface}) is proved in this case as $M^2 \le LM \le L^2$.
\end{proof}


\bibliography{Ref_Zhang}

\providecommand{\bysame}{\leavevmode\hbox to3em{\hrulefill}\thinspace}
\providecommand{\MR}{\relax\ifhmode\unskip\space\fi MR }
\providecommand{\MRhref}[2]{%
  \href{http://www.ams.org/mathscinet-getitem?mr=#1}{#2}
}
\providecommand{\href}[2]{#2}
\begin{thebibliography}{BHPVdV04}

\bibitem[ACG11]{ACG_Geometry_of_curves}
Enrico Arbarello, Maurizio Cornalba, and Pillip~A. Griffiths, \emph{Geometry of
  algebraic curves. {V}olume {II}}, Grundlehren der Mathematischen
  Wissenschaften [Fundamental Principles of Mathematical Sciences], vol. 268,
  Springer, Heidelberg, 2011, With a contribution by Joseph Daniel Harris.
  \MR{2807457}

\bibitem[BHPVdV04]{BHPV}
Wolf~P. Barth, Klaus Hulek, Chris A.~M. Peters, and Antonius Van~de Ven,
  \emph{Compact complex surfaces}, second ed., Ergebnisse der Mathematik und
  ihrer Grenzgebiete. 3. Folge. A Series of Modern Surveys in Mathematics
  [Results in Mathematics and Related Areas. 3rd Series. A Series of Modern
  Surveys in Mathematics], vol.~4, Springer-Verlag, Berlin, 2004. \MR{2030225
  (2004m:14070)}

\bibitem[CH88]{Cornalba_Harris_Slope}
Maurizio Cornalba and Joe Harris, \emph{Divisor classes associated to families
  of stable varieties, with applications to the moduli space of curves}, Ann.
  Sci. {\'E}cole Norm. Sup. (4) \textbf{21} (1988), no.~3, 455--475. \MR{974412
  (89j:14019)}

\bibitem[CLZ16]{Chen_Lu_Zuo_Oort_Conjecture}
Ke~Chen, Xin Lu, and Kang Zuo, \emph{On the {O}ort conjecture for {S}himura
  varieties of unitary and orthogonal types}, Compos. Math. \textbf{152}
  (2016), no.~5, 889--917. \MR{3505642}

\bibitem[Con00]{Conrad_Duality}
Brian Conrad, \emph{Grothendieck duality and base change}, Lecture Notes in
  Mathematics, vol. 1750, Springer-Verlag, Berlin, 2000. \MR{1804902
  (2002d:14025)}

\bibitem[CP08]{Cossart_Piltant_Resolution1}
Vincent Cossart and Olivier Piltant, \emph{Resolution of singularities of
  threefolds in positive characteristic. {I}. {R}eduction to local
  uniformization on {A}rtin-{S}chreier and purely inseparable coverings}, J.
  Algebra \textbf{320} (2008), no.~3, 1051--1082. \MR{2427629 (2009f:14024)}

\bibitem[CP09]{Cossart_Piltant_Resolution2}
\bysame, \emph{Resolution of singularities of threefolds in positive
  characteristic. {II}}, J. Algebra \textbf{321} (2009), no.~7, 1836--1976.
  \MR{2494751 (2010c:14009)}

\bibitem[Cut09]{Cutkosky_Resolution}
Steven~Dale Cutkosky, \emph{Resolution of singularities for 3-folds in positive
  characteristic}, Amer. J. Math. \textbf{131} (2009), no.~1, 59--127.
  \MR{2488485 (2010e:14010)}

\bibitem[Fuj78]{Fujita_Positivity}
Takao Fujita, \emph{On {K}{\"a}hler fiber spaces over curves}, J. Math. Soc.
  Japan \textbf{30} (1978), no.~4, 779--794. \MR{513085 (82h:32024)}

\bibitem[Gro61]{Grothendieck_EGA3}
A.~Grothendieck, \emph{\'{E}l{\'e}ments de g{\'e}om{\'e}trie alg{\'e}brique.
  {III}. \'{E}tude cohomologique des faisceaux coh{\'e}rents. {I}}, Inst.
  Hautes {\'E}tudes Sci. Publ. Math. (1961), no.~11, 167. \MR{0217085 (36
  \#177c)}

\bibitem[HL97]{Huybrechts_Lehn_Moduli}
Daniel Huybrechts and Manfred Lehn, \emph{The geometry of moduli spaces of
  sheaves}, Aspects of Mathematics, E31, Friedr. Vieweg \& Sohn, Braunschweig,
  1997. \MR{1450870}

\bibitem[HM90]{Harris_Morrison_Slope}
J.~Harris and I.~Morrison, \emph{Slopes of effective divisors on the moduli
  space of stable curves}, Invent. Math. \textbf{99} (1990), no.~2, 321--355.
  \MR{1031904}

\bibitem[Hor81]{Horikawa_V}
Eiji Horikawa, \emph{Algebraic surfaces of general type with small
  {$c^{2}_{1}$}. {V}}, J. Fac. Sci. Univ. Tokyo Sect. IA Math. \textbf{28}
  (1981), no.~3, 745--755 (1982). \MR{656051 (84d:14019)}

\bibitem[H{\"o}r10]{Horing_Positivity}
Andreas H{\"o}ring, \emph{Positivity of direct image sheaves---a geometric
  point of view}, Enseign. Math. (2) \textbf{56} (2010), no.~1-2, 87--142.
  \MR{2674856 (2011g:14022)}

\bibitem[Kaw81]{Kawamata_Characterization_of_Abelian}
Yujiro Kawamata, \emph{Characterization of abelian varieties}, Compositio Math.
  \textbf{43} (1981), no.~2, 253--276. \MR{622451 (83j:14029)}

\bibitem[KMM87]{KMM_Intro_MMP}
Yujiro Kawamata, Katsumi Matsuda, and Kenji Matsuki, \emph{Introduction to the
  minimal model problem}, Algebraic geometry, {S}endai, 1985, Adv. Stud. Pure
  Math., vol.~10, North-Holland, Amsterdam, 1987, pp.~283--360. \MR{946243
  (89e:14015)}

\bibitem[Kol87]{Kollar_Subadditivity}
J{{\'a}}nos Koll{{\'a}}r, \emph{Subadditivity of the {K}odaira dimension:
  fibers of general type}, Algebraic geometry, {S}endai, 1985, Adv. Stud. Pure
  Math., vol.~10, North-Holland, Amsterdam, 1987, pp.~361--398. \MR{946244
  (89i:14029)}

\bibitem[Lan06]{Langer_Moduli}
Adrian Langer, \emph{Moduli spaces and {C}astelnuovo-{M}umford regularity of
  sheaves on surfaces}, Amer. J. Math. \textbf{128} (2006), no.~2, 373--417.
  \MR{2214897 (2007a:14018)}

\bibitem[Lan13]{Langer_Positivity}
\bysame, \emph{On positivity and semistability of vector bundles in finite and
  mixed characteristics}, J. Ramanujan Math. Soc. \textbf{28A} (2013),
  287--309. \MR{3115197}

\bibitem[Lan15]{Langer_adv}
\bysame, \emph{Generic positivity and foliations in positive characteristic},
  Advances in Mathematics \textbf{277} (2015), 1--23.

\bibitem[Lip78]{Lipman_Resolution}
Joseph Lipman, \emph{Desingularization of two-dimensional schemes}, Ann. Math.
  (2) \textbf{107} (1978), no.~1, 151--207. \MR{0491722 (58 \#10924)}

\bibitem[LZ14]{Lu_Zuo_Oort_Conjecture}
Xin Lu and Kang Zuo, \emph{The {O}ort conjecture on {S}himura curves in the
  {T}orelli locus of curves}, Preprint (2014).

\bibitem[Miy87]{Miyaoka_Chern}
Yoichi Miyaoka, \emph{The {C}hern classes and {K}odaira dimension of a minimal
  variety}, Algebraic geometry, {S}endai, 1985, Adv. Stud. Pure Math., vol.~10,
  North-Holland, Amsterdam, 1987, pp.~449--476. \MR{946247 (89k:14022)}

\bibitem[{Mor}81]{Moret-Bailly_Families_de_courbes}
Laurent {Moret-Bailly}, \emph{{Familles de courbes et de vari\'et\'es
  abeliennes sur $P\sp 1.$ I. Descente des polarisations.}}, {Ast\'erisque}
  \textbf{86} (1981), 109--124 (French).

\bibitem[Mor97]{Moriwaki_Bogomolov}
Atsushi Moriwaki, \emph{Bogomolov conjecture over function fields for stable
  curves with only irreducible fibers}, Compositio Math. \textbf{105} (1997),
  no.~2, 125--140. \MR{1440719 (98d:14036)}

\bibitem[Mum77]{Mumford_Stability}
David Mumford, \emph{Stability of projective varieties}, Enseignement Math. (2)
  \textbf{23} (1977), no.~1-2, 39--110. \MR{0450272}

\bibitem[Par05]{Pardini_Severi}
Rita Pardini, \emph{The {S}everi inequality {$K^2\geq 4\chi$} for surfaces of
  maximal {A}lbanese dimension}, Invent. Math. \textbf{159} (2005), no.~3,
  669--672. \MR{2125737 (2006a:14065)}

\bibitem[Pat14]{Patakfalvi_Semipositivity}
Zsolt Patakfalvi, \emph{Semi-positivity in positive characteristics}, Ann. Sci.
  \'Ec. Norm. Sup\'er. (4) \textbf{47} (2014), no.~5, 991--1025. \MR{3294622}

\bibitem[Per81]{Persson_Double_Cover}
Ulf Persson, \emph{Chern invariants of surfaces of general type}, Compositio
  Math. \textbf{43} (1981), no.~1, 3--58. \MR{631426 (83b:14012)}

\bibitem[Sto08]{Stoppino_Slope}
Lidia Stoppino, \emph{Slope inequalities for fibred surfaces via {GIT}}, Osaka
  J. Math. \textbf{45} (2008), no.~4, 1027--1041. \MR{2493968}

\bibitem[Vie83]{Viehweg_Weak_Positivity}
Eckart Viehweg, \emph{Weak positivity and the additivity of the {K}odaira
  dimension for certain fibre spaces}, Algebraic varieties and analytic
  varieties ({T}okyo, 1981), Adv. Stud. Pure Math., vol.~1, North-Holland,
  Amsterdam, 1983, pp.~329--353. \MR{715656 (85b:14041)}

\bibitem[Xia87]{Xiao_Slope}
Gang Xiao, \emph{Fibered algebraic surfaces with low slope}, Math. Ann.
  \textbf{276} (1987), no.~3, 449--466. \MR{875340 (88a:14046)}

\bibitem[YZ14]{YuanZhang_RelNoether}
Xinyi Yuan and Tong Zhang, \emph{Relative {N}oether inequality on fibered
  surfaces}, Adv. Math. \textbf{259} (2014), 89--115. \MR{3197653}

\bibitem[Zha14]{Zhang_Severi}
Tong Zhang, \emph{Severi inequality for varieties of maximal {A}lbanese
  dimension}, Math. Ann. \textbf{359} (2014), no.~3-4, 1097--1114. \MR{3231026}

\bibitem[Zha16]{Zhang_Relative_Clifford_inequality}
\bysame, \emph{Relative {C}lifford's inequality for fibered varieties by
  curves}, Preprint (2016).

\end{thebibliography}
\bibliographystyle{amsalpha}

\end{document}